\theoremstyle{definition}
\newtheorem{definition}{Definition}
\newtheorem{theorem}{Theorem}
\newtheorem{lemma}[theorem]{Lemma}
\newtheorem{proposition}[theorem]{Proposition}
\newtheorem{corollary}[theorem]{Corollary}
\newtheorem{conjecture}[theorem]{Conjecture}
\newtheorem{remark}{Remark}
\title{Diffusion for chaotic plane sections of 3-periodic surfaces}
\author {Artur Avila}
\address{CNRS UMR 7586, Institut de Math\'ematiques de Jussieu - Paris Rive Gauche, Batiment Sophie Germain, Case 7021, 75205 Paris Cedex 13, France\\
and IMPA, Estrada Dona Castarino 110, 22460-320, Rio de Janeiro, Brazil}
\email{artur@math.jussieu.fr}
\author{Pascal Hubert}
\address{Institut de Math\'ematiques de Marseille, 39 rue F. Joliot-Curie, 13453 Marseille Cedex 20, France}
\email{pascal.hubert@univ-amu.fr}
\author {Alexandra Skripchenko}
\address{Faculty of Mathematics, National Research University Higher School of Economics, Vavilova St. 7, 112312 Moscow, Russia}
\email{sashaskrip@gmail.com}
\begin{document}
\begin{abstract}
We study chaotic plane sections of some particular family of triply periodic surfaces. 
The question about possible behavior of such sections was posed by S. P. Novikov. 
We prove some estimations on the diffusion rate of these sections using the connection between Novikov's problem and systems of isometries - some natural generalization of interval exchange transformations. Using thermodynamical formalism, we construct an invariant measure for systems of isometries of a special class called the Rauzy gasket, and investigate the main properties of the Lyapunov spectrum of the corresponding suspension flow. 
\end{abstract}

\maketitle
\section{Introduction}
\subsection{Historical background and main result}
A surface $\hat M$ in $\mathbb R^{3}$ is called \emph{triply periodic} if it is invariant under translations on vectors from the integral lattice $\mathbb Z^{3}$ (see Figure \ref{sur} for an example). In the general context the problem of the asymptotic behavior of sections of triply periodic surfaces by planes of some fixed direction was posed by S.P. Novikov in connection with the theory of metals (see \cite{N}). 

Sections of $\hat M$ by the family of planes of fixed direction $H$ consist of the collection of curves (see Figure \ref{sec}). It was shown (see \cite{Zo} and \cite{D1}) that usually these curves contain either only closed components (\emph{trivial case}) or every unbounded component has a form of a finitely deformed straight line and therefore has a strong asymptotic direction (\emph{integrable case}). However, in \cite{D1} I. Dynnikov proved that it is possible that the section does not have such a direction and wanders around the whole plane (\emph{chaotic case}). See Figure \ref{sec} for an example of chaotic section.



\begin{figure}
    \vspace{-2.5cm}
    \centering
    \begin{subfigure}[b]{0.5\textwidth}
     
        \centering
        \includegraphics[width=\textwidth]{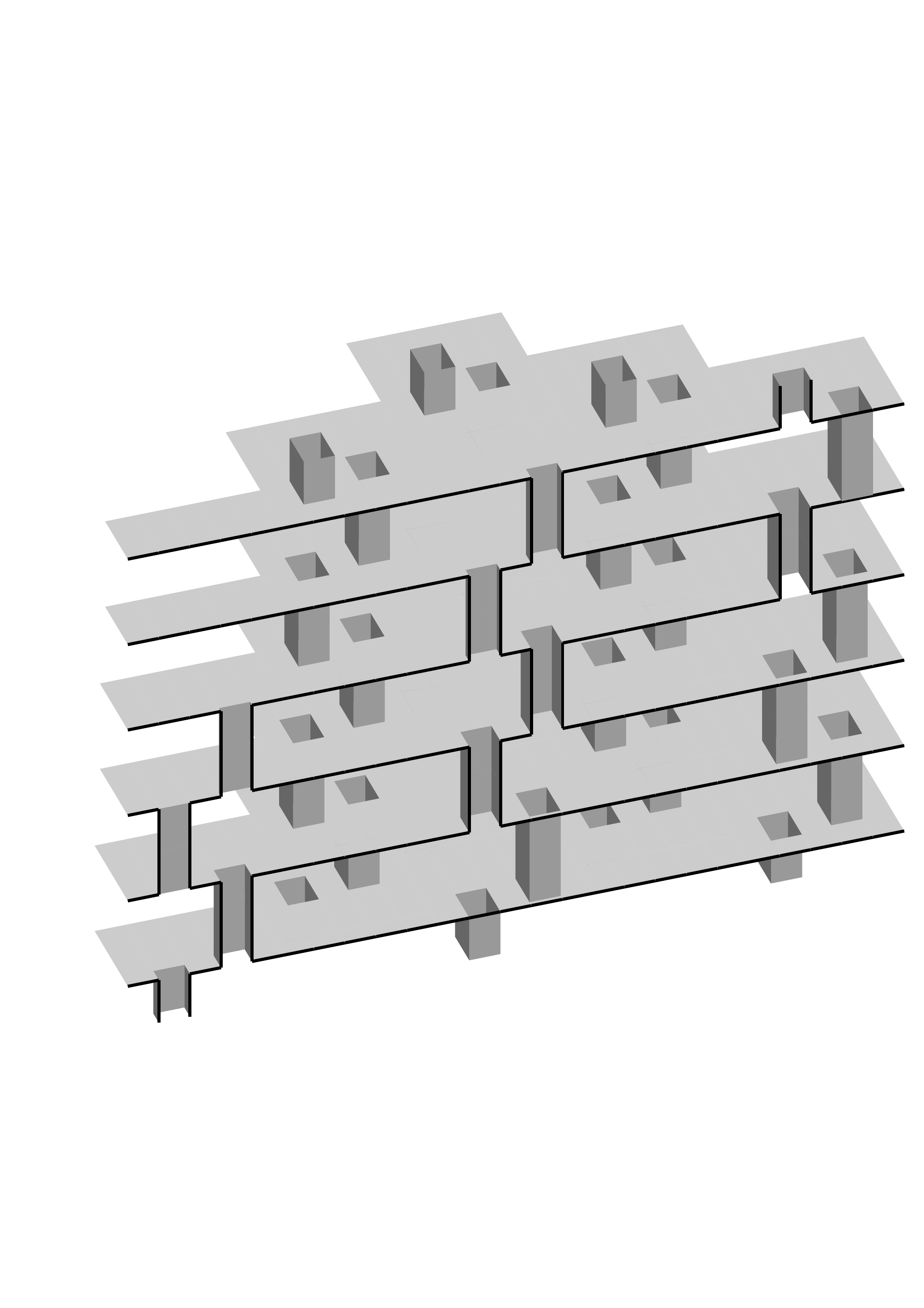}  
        \vspace{-3.7cm}   
        \label{f}
    \end{subfigure}
    \hfill
    \begin{subfigure}[b]{0.75\textwidth}
     \vspace{-2cm}
        \centering
        \includegraphics[width=\textwidth]{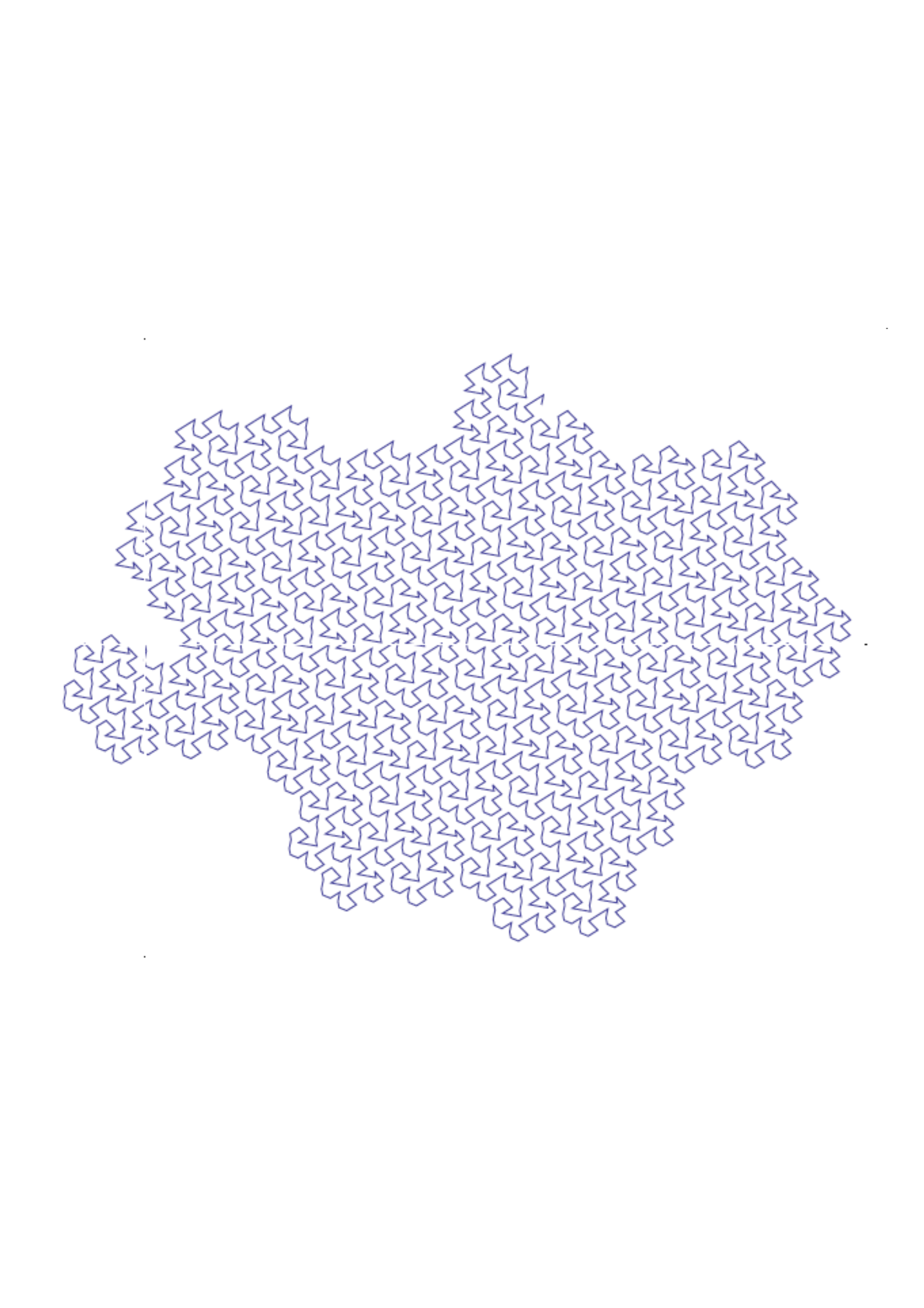}
        \put(-247,215){\vector(1,-1){20}}
        \put(-251,217){$x$}
        \put(-91,282.7){\vector(-1,-1){20}}
         \put(-92,285){$x_t$}
         \vspace{-1cm} 
        \label{}
    \end{subfigure}
  \vspace{-2.5cm}  
  \caption{Chaotic plane section of 3-periodic surfaces.}
   \label{sec}
\end{figure}

In the current paper we examine chaotic sections for the particular family of surfaces of genus $3$ and foliations on them with $2$ double saddles. The idea of our main theorem is the following: in \cite{D} Dynnikov introduced a way to construct chaotic regimes from some natural generalization of interval exchange transformations (IET) called \emph{systems of isometries}. More precisely, he found a bijection between chaotic regimes and systems of isometries of order $3$ of thin type. In \cite{AHS} we studied a particular family of these systems and the corresponding parameter set called the Rauzy gasket (the object also appeared in \cite{AS}, \cite{L}). In all these papers, although the motivation was completely different, the dynamics came from the iterated application of the same algorithm that will be described below, and the Rauzy gasket was an attractor of the dynamical system associated with this generating algorithm.   

An application of Dynnikov's construction to the family of systems of isometries mentioned above provides a special collection of chaotic regimes. Our result describes the typical behavior of these chaotic sections.

\begin{theorem}\label{main}
There exists a probability measure for the Rauzy gasket, and 
the diffusion rate of the trajectories for almost all chaotic regimes (with respect to this measure) is strictly between $\frac{1}{2}$ and $1:$
$$\frac{1}{2}<\limsup_{t\to\infty}\frac{\log d(x,x_{t})}{\log t}<1,$$
where $d(x,y)$ is the standard Euclidian distance between points $x$ and $y$ on the plane, $x$ is some starting point that belongs to the section and $x_t$ is the position of the point after time $t$. 
\end{theorem}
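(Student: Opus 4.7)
The plan is to combine three ingredients: construct an invariant probability measure on the Rauzy gasket via thermodynamic formalism, show that the associated renormalization cocycle has two strictly positive Lyapunov exponents $\lambda_1 > \lambda_2 > 0$ with the quantitative pinch $\lambda_2 > \lambda_1/2$, and then translate this spectral information into diffusion bounds using Zorich-style deviation arguments for the suspension flow. The diffusion exponent will come out as $\lambda_2/\lambda_1$, so the inequalities in the theorem are equivalent to $\lambda_1/2 < \lambda_2 < \lambda_1$.

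For the measure, I would view the Rauzy gasket as the attractor of an iterated function system given by three projective contractions (the branches of Dynnikov's renormalization described in \cite{AHS}), and code its points by one-sided sequences in $\{1,2,3\}$. The three vertices of the gasket are parabolic fixed points of the shift, reminiscent of the cusps of the Farey map, so the natural geometric potential $-\log|\det D\Phi|$ is only neutrally expanding there. I would handle this by inducing on a hyperbolic subset obtained by deleting cylinders that approach the vertices, and then applying the thermodynamic formalism of countable Markov shifts to produce an equilibrium state of finite mass; pushing back, one obtains the desired invariant probability. The core estimate is summability of return times, which reduces to controlling the combinatorics of escapes into the three cusps.

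With the measure in hand, Oseledets' theorem applied to the product of Dynnikov matrices along the orbit gives a spectrum $\lambda_1 \ge \lambda_2 \ge \lambda_3$ with $\lambda_1 + \lambda_2 + \lambda_3 = 0$. Positivity and simplicity of $\lambda_1$ follow from a standard Hilbert-metric contraction argument, since the generators strictly contract the positive cone. To bound $\lambda_2$ from below I would pass to the exterior square cocycle and adapt the Avila--Viana simplicity criterion to the induced hyperbolic system, verifying irreducibility and a twisting/pinching condition; the quantitative bound $\lambda_2 > \lambda_1/2$, equivalently $2\lambda_2 + \lambda_3 > 0$, then follows either from an explicit finite-product estimate or by refining the positivity argument on the exterior square. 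Finally, the passage from exponents to the geometric diffusion rate uses the standard framework for translation surfaces: unfolding the measured foliation on the genus-$3$ surface, the transverse cycle read off by $x_t$ grows at rate $t^{\lambda_2/\lambda_1}$ in the non-leading Oseledets direction, and the projection to the plane is injective on that direction for the surfaces under consideration, yielding $d(x, x_t) \asymp t^{\lambda_2/\lambda_1}$.

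The main obstacle I expect is the strict separation $\lambda_2 > \lambda_1/2$. Simplicity of the top exponent is routine, and positivity of $\lambda_2$ should follow from a Furstenberg-type argument on the exterior square, but the quantitative pinch is sharper: the renormalization is non-uniformly hyperbolic with neutral behavior at the gasket vertices, so the Avila--Viana machinery cannot be invoked off the shelf, and ruling out the borderline case $\lambda_2 = \lambda_1/2$ may force an explicit estimate on products of Dynnikov matrices rather than a soft Zariski-density argument. The secondary difficulty is the finiteness of the equilibrium measure, since the parabolic fixed points threaten to leak all the mass into the cusps.
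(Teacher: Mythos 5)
Your three-stage strategy (thermodynamic formalism for the invariant measure, Oseledets for the cocycle, Zorich-style deviation for the rate) matches the paper's architecture at the block level, and the observation that the cusps of the gasket force a countable-alphabet Markov treatment is also in line with what the paper does. However, there is a substantive error in the middle of your argument that propagates into the final step and makes you misidentify which spectral properties actually have to be proved.

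The diffusion exponent here is \emph{not} $\lambda_2/\lambda_1$. The cocycle driving the deviations is not symplectic and does not have a symmetric spectrum, so the Zorich/Forni picture for translation surfaces cannot be imported verbatim. In the paper's normalization the cocycle $B$ over the accelerated Rauzy induction is a product of non-negative $SL(3,\mathbb Z)$ matrices and has spectrum $\lambda_1>0>\lambda_2\ge\lambda_3$ with $\lambda_1+\lambda_2+\lambda_3=0$. The length of trajectory (time $t$) is governed by the top exponent $\lambda_1$ of $B$, while the displacement $d(x,x_t)$ is controlled by the dual cocycle $A=(B^{T})^{-1}$, whose top exponent is $-\lambda_3$. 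Hence the correct formula is
$$\limsup_{t\to\infty}\frac{\log d(x,x_t)}{\log t}=-\frac{\lambda_3}{\lambda_1},$$
not $\lambda_2/\lambda_1$. Incidentally, a concrete Galois-pinching periodic orbit in the paper has characteristic polynomial $\lambda^3-19\lambda^2+9\lambda-1$, whose approximate roots give $\lambda_2(A)/\lambda_1(A)\approx 0.45<1/2$, so your formula would fail the lower bound on at least one periodic orbit.

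This changes which inequalities are hard. With the correct formula, $-\lambda_3/\lambda_1<1$ is equivalent to $\lambda_2<0$, i.e.\ the \emph{Pisot property} of the cocycle, which the paper imports from the Avila--Delecroix analysis of the fully subtractive algorithm; it is not a consequence of simplicity of $\lambda_1$ (your Hilbert-metric argument proves $\lambda_1>\lambda_2$ but says nothing about the sign of $\lambda_2$). On the other side, $-\lambda_3/\lambda_1>1/2$ is equivalent, using $\lambda_1+\lambda_2+\lambda_3=0$, to the \emph{simplicity} statement $\lambda_2>\lambda_3$. So the lower bound does not require the quantitative pinch $\lambda_2>\lambda_1/2$ you are worried about; it only requires that the two negative exponents be distinct, which the paper obtains from the Matheus--M\"oller--Yoccoz Galois twisting/pinching criterion with two explicit periodic loops. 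The ``main obstacle'' you highlighted is therefore a phantom created by the wrong diffusion formula.

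On the measure: the paper codes the gasket by the accelerated Rauzy induction, checks the BIP property and local H\"older continuity of the roof function (using the exponential tail from the earlier Hausdorff-dimension paper), and takes the potential $-\kappa r$ with $\kappa$ tuned so the Gurevich--Sarig pressure vanishes; the resulting Gibbs measure corresponds via Abramov's formula to the measure of maximal entropy for the suspension flow. Your suggestion of inducing on a hyperbolic subset and using a geometric $-\log|\det D\Phi|$ potential is in the same family of techniques, but note the branches of this renormalization are projective (hence non-conformal) maps of a $2$-simplex, so you would need to justify the choice of potential; the paper sidesteps this by working with the roof function directly, and the only analytic input is the exponential tail $\int e^{\beta r}\,d\lambda<\infty$.
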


\begin {remark}
The measure we construct is invariant with respect to the generating algorithm mentioned above and it is the measure of maximal entropy for the suspension flow associated with this algorithm. 
\end{remark}

The theorem gives an answer to two questions: existence of an invariant measure for the Rauzy gasket (posed by P. Arnoux and S. Starosta in \cite{AS}) and the diffusion rate of chaotic trajectories (asked by A. Zorich in 2011). 

Morally, our result means that for almost every parameter, the leaves behave in a way which is in some sense intermediate between moving away as slowly as possible (spending in each region the amount of time proportional to the area of region) and as quickly as possible (running away toward infinity with a linear rate). 
A similar result for wind-tree model was established in \cite{DHL}. For our result, as well as for one proved in \cite{DHL}, it is very important that we deal with $\limsup$ (and not with a simple $\lim$): one can check figure \ref{sec}\footnote{See \cite{McM} as a source of the bottom figure} where it is visible that our statement does not hold for the $\lim$.

From the point of view of Novikov's problem, our theorem shows the existence of some leading asymptotic direction for a typical chaotic section.

\subsection{Organization of the paper} 
In section \ref{1} we provide the original statement of Novikov's problem and briefly discuss the connection with the systems of isometries, as well as some open questions related to this problem. We mainly recall the ideas established by Dynnikov in \cite{D}.

In section \ref{2} we present some particular family of systems of isometries (the corresponding set of parameters is called the Rauzy gasket) and describe the associated Markov map and symbolic dynamics.

In section \ref{3} we construct the suspension flow. We also examine some important properties of the roof function. 
 
In section \ref{4} using thermodynamical formalism for countable Markov shift we prove the existence and the uniqueness of the Gibbs measure and the equilibrium measure with respect to the Markov map. With the similar arguments we show existence and uniqueness of the measure of maximal entropy for the suspension flow.
Finally, using Abramov's formula, we obtain a natural invariant measure on the Rauzy gasket.

In section \ref{5} we use the Oseledets theorem to define the Lyapunov exponents for some special cocycle (which is the analogue of Kontsevich-Zorich cocycle over Teichm\"uller flow). Such a cocycle contains the information about the orientation for the band complex that is the suspension of the system of isometries, and this cocycle differs from the one that was used for the definition of the flow. 

In section \ref{6} we express the diffusion rate of the trajectories in the chaotic case in terms of the Lyapunov exponents of the cocycle constructed in the previous section.

In section \ref{7}, we prove some properties of the Lyapunov spectrum for our version of Kontsevich-Zorich cocycle, such as Pisot property and simplicity, and use them to conclude our estimation.  

\subsection{Acknowledgments} We heartily thank A. Zorich for posing the problem and several improvements to the first version of the text. We are very grateful to F. Ledrappier who kindly explained Sarig's theory to us. We also thank I. Dynnikov and V. Delecroix for many fruitful discussions and C. Matheus for his explanations on the Galois version of the twisting/pinching criterium. 

We thank C. McMullen for the bottom part of the Figure \ref{sec}.

We also thank the anonymous referee for many useful suggestions and improvements to the previous version of the paper.

The first author was partially supported by the ERC Starting Grant \textquotedblleft Quasiperiodic\textquotedblright  and by the Balzan project of Jacob Palis. The second author was partially supported by the projet ANR GeoDyM and ANR VALET. The third author was partially supported by the Fondation Sciences Math\'ematiques de Paris, Metchnikov scholarship and the Dynasty Foundation. 

\section{Novikov's problem and systems of isometries}\label{1}
\subsection{General description}
\noindent  Let us start from the formal statement of Novikov's problem posed in \cite{N}.
We consider a triply periodic surface $\hat M$ that is a level surface of some smooth 3-periodic function. The motivation to study asymptotic behavior of regular plane sections of such a surface by the family of parallel planes orthogonal to some non-zero vector $H=(H_{1}, H_{2}, H_{3})$ came from the conductivity theory for monocrystals since the periodic surface can be interpreted as a Fermi surface of some metal and the vector $H$ is the direction of constant magnetic field (see an example of Fermi surface of tin in \cite{Z}). So the plane sections can be seen as electron trajectories in the inverse metal lattice in a presence of a magnetic field.

There exist two equivalent approaches to this problem. In the framework of the first approach, the periodic surface is fixed and different families of planes are considered while using the second approach one fixes the vector $H$ and considers a family of perturbations of a periodic surface. 

Both of these strategies were applied with different results. Using the first one, Zorich in \cite {Zo} proved that if the direction of a plane is a sufficiently small perturbation of a rational direction, then every unbounded component of any nonsingular section goes along a straight line with a bounded deviation from it. With the second one Dynnikov in \cite{D1} generalized this result and proved that typically a regular plane section of a triply periodic surface either consists of compact components only (\emph{trivial case}) or has unbounded components that have the form of finitely deformed periodic family of parallel straight lines (\emph{integrable case}). 

The presence of a strong asymptotic direction of the discussed curves is explained by the fact
that the image of such a curve under the natural projection $\pi: \mathbb R^{3}\rightarrow \mathbb T^{3}=\mathbb R^{3}/\mathbb Z^{3}$ densely fills not the whole surface $M=\pi(\hat M)$ but only a part that has genus one.

\begin{definition}
A plane section of the surface $\hat M$ by a plane is called chaotic if it has at
least one connected component such that the closure of its projection $\pi$ is a subsurface of $M$
(possibly with boundary) of genus strictly greater than one.
\end{definition}

The first example of such a non-typical behavior in which the unbounded components had an
asymptotic direction but did not fit into a strip of finite width was constructed by S. Tsarev in 1992 (see \cite{D1} for details). However, the plane direction in this example is not totally irrational, meaning that the irrationality degree of this vector is $2$. Dynnikov proved that if this condition holds then all regular non-closed section components in the covering space have some asymptotic direction but do not fit into any strip (it means that the trajectories have a form of distorted line but the distortion is not uniformly bounded). He also showed that in a generic situation (when the irrationality degree of $H$ is equal to $3$) the genus of the surface $M$ in the chaotic case should be equal to at least  $3$. Indeed, the construction that we describe later always provides $M$ of genus $3$. 

Finally, Dynnikov proved the following
\begin{theorem} \label{thd} [Dynnikov, 1997]
In the space of pairs $(M,H)$, where M is a null-homologous surface in the $\mathbb T^{3}$ and
$H$ is a covector from $\mathbb R^{3}$, all pairs giving rise to a chaotic foliation $F$ are contained in a subset $\EuScript{R}$ of codimension $1$ and, moreover, form a nowhere dense subset in it.
\end{theorem}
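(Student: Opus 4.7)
The plan is to reformulate the problem as one about foliations on the compact genus-$3$ surface $M = \pi(\hat M) \subset \mathbb{T}^3$ defined by a closed $1$-form, and to perform a dimension count on the configurations that produce chaos. First I would observe that $H$ descends to a closed $1$-form $\omega_H$ on $\mathbb{T}^3$, whose restriction $\omega_H|_M$ carries the same foliation data as the plane sections: a leaf of $\omega_H|_M$ is closed if and only if its lift to $\hat M \subset \mathbb{R}^3$ is bounded. Consequently the chaotic case is equivalent to the foliation $\mathcal{F}_H$ on $M$ admitting a minimal component supported on a subsurface of genus $\geq 2$, while the trivial and integrable cases correspond respectively to having only closed leaves and to having only minimal components of genus one.

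Next I would apply the Maier-type decomposition of $M$ into periodic cylinders and finitely many minimal components, each supported on a subsurface $M' \subset M$ whose boundary $\partial M'$ consists of closed leaves and saddle connections of $\omega_H$. Since each boundary curve is a leaf of the foliation, $\omega_H$ integrates to zero along every cycle carried by $\partial M'$. The dimension count is then as follows: if $\gamma \subset \partial M'$ represents a nontrivial class in $H_1(M;\mathbb{Z})$, the equation $\int_\gamma \omega_H = 0$ is a single linear condition on the three-dimensional parameter $H \in \mathbb{R}^3$ (for fixed $M$), hence a codimension-$1$ condition in the pair space. Taking $\EuScript{R}$ to be the union, over all simple integer cycles $\gamma$ that can appear as the boundary of a minimal component of some closed $1$-form on $M$, of the loci $\{\int_\gamma \omega_H = 0\}$ yields a codimension-$1$ set containing every chaotic pair.

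For nowhere density of chaos inside $\EuScript{R}$, I would argue as follows. Once the linear condition defining a sheet of $\EuScript{R}$ is imposed, the direction $H$ still varies in a $2$-dimensional family, and $\omega_H|_{M'}$ varies in a non-trivial family of closed $1$-forms on the higher-genus subsurface $M'$. A translation-surface / Masur-style density argument for the period map of $\omega_H|_{M'}$ shows that the set of directions for which the foliation on $M'$ remains minimal has dense complement, corresponding to additional saddle connections or rational rotation numbers for first-return maps. Any such further degeneration subdivides the chaotic minimal component into pieces of genus $\leq 1$, pushing the pair out of the chaotic locus; hence chaotic pairs are nowhere dense in $\EuScript{R}$.

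The main obstacle I anticipate is verifying that some $\gamma \subset \partial M'$ is genuinely non-trivial in $H_1(M;\mathbb{Z})$; a priori, the whole boundary of the chaotic minimal component could bound in $M$, in which case no constraint on $H$ would be imposed. This is precisely where the hypothesis that $M$ is null-homologous in $\mathbb{T}^3$ and the fact that $\mathrm{genus}(M)=3$ must be used: one argues that a chaotic minimal component with homologically trivial boundary in $M$ forces the component to fill all of $M$, but then the periods of $\omega_H$ on the six generators of $H_1(M;\mathbb{Z})$ must satisfy extra arithmetic relations coming from the embedding $M \hookrightarrow \mathbb{T}^3$, which again cuts out a codimension-$1$ subset of $H$ that is nowhere dense by the same perturbation argument.
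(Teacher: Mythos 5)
The paper does not actually prove Theorem~\ref{thd}; it is stated as a citation of Dynnikov \cite{D1}, so there is no internal argument to compare your sketch against. Judged on its own merits, your high-level strategy (chaos forces a minimal component $M'$ of genus~$\ge 2$, whose boundary cycles impose linear period constraints on $H$) is indeed the right dimension-count, but it has two genuine gaps.

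First, the constraint $\int_\gamma \omega_H = 0$ extracted from a cycle $\gamma \subset \partial M'$ is a non-trivial condition on $H$ only when $\gamma$ carries a non-zero class in $H_1(\mathbb{T}^3;\mathbb{Z})$, not merely in $H_1(M;\mathbb{Z})$: if $\gamma$ bounds a $2$-chain anywhere in the torus, then $\int_\gamma \omega_H$ vanishes for every $H$ by Stokes (since $d\omega_H = 0$), even if $[\gamma]_M \neq 0$. You flag the danger that $\gamma$ might bound in $M$, but the obstruction you actually need to rule out is bounding in the ambient $\mathbb{T}^3$; the null-homologous hypothesis on $M$ and the genus-$3$ structure must be deployed precisely to produce a bounding separatrix cycle of $M'$ that is non-trivial in $H_1(\mathbb{T}^3;\mathbb{Z})$, and that step is the heart of the codimension claim, not something to be deferred. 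Second, your nowhere-density argument shows only that the chaotic locus in $\mathcal{R}$ has dense complement, hence empty interior; nowhere density requires the \emph{closure} to have empty interior. What is missing is a stability (openness) statement for the non-chaotic regimes inside $\mathcal{R}$ --- that a trivial or integrable configuration remains so under small perturbation --- which is a nontrivial Zorich--Dynnikov-type result and is not supplied by the density of saddle-connection directions alone.
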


In order to make this statement precise, we have to admit that although the space of all surfaces is infinite-dimensional, the section properties we are interested in depend in the generic case on finitely many parameters, namely, on the positions of the saddles of the foliation $F$ and on the
coordinates of the covector $H$.

The pair $(M,H)$ that gives rise to a chaotic foliation will be called a \emph{chaotic regime}.

Now all remaining open questions in Novikov's problem are related to chaotic case. In particular, we are interested in probability to obtain chaotic section (inside of this special subspace described by Dynnikov) as well as in any information about the geometry of this kind of sections. 
In 2003 Novikov and A. Maltsev (see \cite{MN}) formulated the following
\begin{conjecture}\label{nm}
For every $M$ chaotic regimes form a subset of Hausdorff dimension strictly smaller than $2$ in $\EuScript{R}$.
\end{conjecture}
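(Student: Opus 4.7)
The plan is to reduce Conjecture \ref{nm} to a dimension estimate on the Rauzy gasket, and then to leverage the thermodynamical formalism and the Lyapunov spectrum developed in the body of the paper. Denote the Rauzy gasket by $\mathcal{G}$. First, fix the surface $M$. By Dynnikov's bijection (recalled in Section \ref{1}), the set of covectors $H$ for which the family of planes orthogonal to $H$ gives a chaotic section of $\hat M$ is parametrized by systems of isometries of order $3$ of thin type, which embed into $\mathcal{G}$ via a map that is locally bi-Lipschitz away from its boundary. It is therefore enough to prove that the Hausdorff dimension $\dim_H(\mathcal{G})$ is strictly less than $2$.

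The core of the argument exploits the Markov coding from Section \ref{2}: $\mathcal{G}$ is the attractor of a (non-uniformly hyperbolic) projective iterated function system, whose inverse branches are the matrices of the generating algorithm. Applying Sarig's formalism as in Section \ref{4}, together with the simplicity and positivity of the two top Lyapunov exponents $\theta_{1}>\theta_{2}>0$ of the Kontsevich--Zorich-type cocycle established in Section \ref{7}, one sees that cylinders of length $n$ are, after projective normalization, approximately rectangles with sides of order $e^{-n\theta_{1}}$ and $e^{-n\theta_{2}}$. A Ledrappier--Young-type anisotropic dimension estimate then yields
$$\dim_{H}(\mathcal{G})\le 1+\frac{\theta_{2}}{\theta_{1}}<2,$$
the strict inequality being precisely the simplicity of the spectrum proved in Section \ref{7}.

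The main obstacle is the combination of failure of conformality and non-uniform hyperbolicity. Projective contractions stretch different directions at different rates, so the classical Moran/Bowen formula does not apply verbatim; moreover, long returns to the cuspidal neighborhoods of the parameter simplex produce arbitrarily slow contraction, ruling out any uniform bounded-distortion statement on the whole of $\mathcal{G}$. The remedy I have in mind is to induce on a Markov subset away from the cusps, establish integrability of the roof function against the Gibbs measure using the estimates of Section \ref{3}, and then transfer the dimension bound back via an Abramov-type argument, together with a careful covering of $\mathcal{G}$ by rectangles of the correct aspect ratio coming from the cocycle. Obtaining the matching lower bound (and hence the exact value of $\dim_{H}(\mathcal{G})$) would additionally require exact-dimensionality of the measure of maximal entropy, which in this non-uniformly hyperbolic projective setting is itself a nontrivial Ledrappier--Young computation and is not needed for the conjecture as stated.
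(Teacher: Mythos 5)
This item is a \emph{conjecture} of Novikov and Maltsev, not a theorem proved in this paper. The text immediately after the statement says explicitly that Conjecture~\ref{nm} ``is proved only for this case (see \cite{AHS})'', meaning only for the Dynnikov--De~Leo model where a single surface is fixed and only the plane direction varies. The present paper neither proves the conjecture nor claims to; its Theorem~\ref{main} is about diffusion rates, not Hausdorff dimension. So there is no proof in this paper to compare yours against, and a ``proof proposal'' for this statement is already answering a question the paper deliberately leaves open.

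Beyond that mismatch, two points in your sketch would not survive scrutiny. First, the reduction step. The conjecture quantifies over \emph{every} null-homologous $M\subset\mathbb T^3$, but Dynnikov's correspondence to special systems of isometries of order $3$ (Definition~\ref{def1}), and hence to the Rauzy gasket, is specific to the genus-$3$, two-double-saddle family with the particular symmetry built into the construction of Section~\ref{DC}. For a general $M$ the chaotic locus in $\EuScript R$ is not parametrized by $\mathcal G$, and no ``locally bi-Lipschitz embedding'' into $\mathcal G$ is available. So even a complete proof that $\dim_H(\mathcal G)<2$ (which is the content of \cite{AHS}) would only settle the special case the paper already attributes to that reference, not the full conjecture. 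Second, the dimension bound itself. The cocycle $B$ of Section~\ref{5} lies in $SL(3,\mathbb R)$ with spectrum $\lambda_1>0>\lambda_2\ge\lambda_3$ and $\lambda_1+\lambda_2+\lambda_3=0$; there are not two positive exponents $\theta_1>\theta_2>0$. On the projectivized two-dimensional simplex the contraction rates of a cylinder are governed by ratios such as $\lambda_2-\lambda_1$ and $\lambda_3-\lambda_1$, and the Ledrappier--Young/Falconer-type upper bound for a non-conformal, non-uniformly hyperbolic projective IFS does not reduce to the clean formula $1+\theta_2/\theta_1$ without substantial extra work (transversality or domination hypotheses, control of the measure of cylinders, handling of the countably many branches near the cusp). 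In short: the reduction does not cover ``every $M$'', the exponent bookkeeping is off, and the dimension inequality is asserted rather than derived.
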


In \cite{DD} Dynnikov and R. De Leo suggested a particular model to study where they fixed the surface (see Figure \ref{sur}) and varied the family of the plane sections. The conjecture \ref{nm} is proved only for this case (see \cite{AHS}). In our paper we study the same group of examples but use another strategy as mentioned above: we fixed the direction of planes and varied the lattice. 

\begin{figure}
\centering
\vspace{-2cm}
\includegraphics[height=8cm, width=6cm]{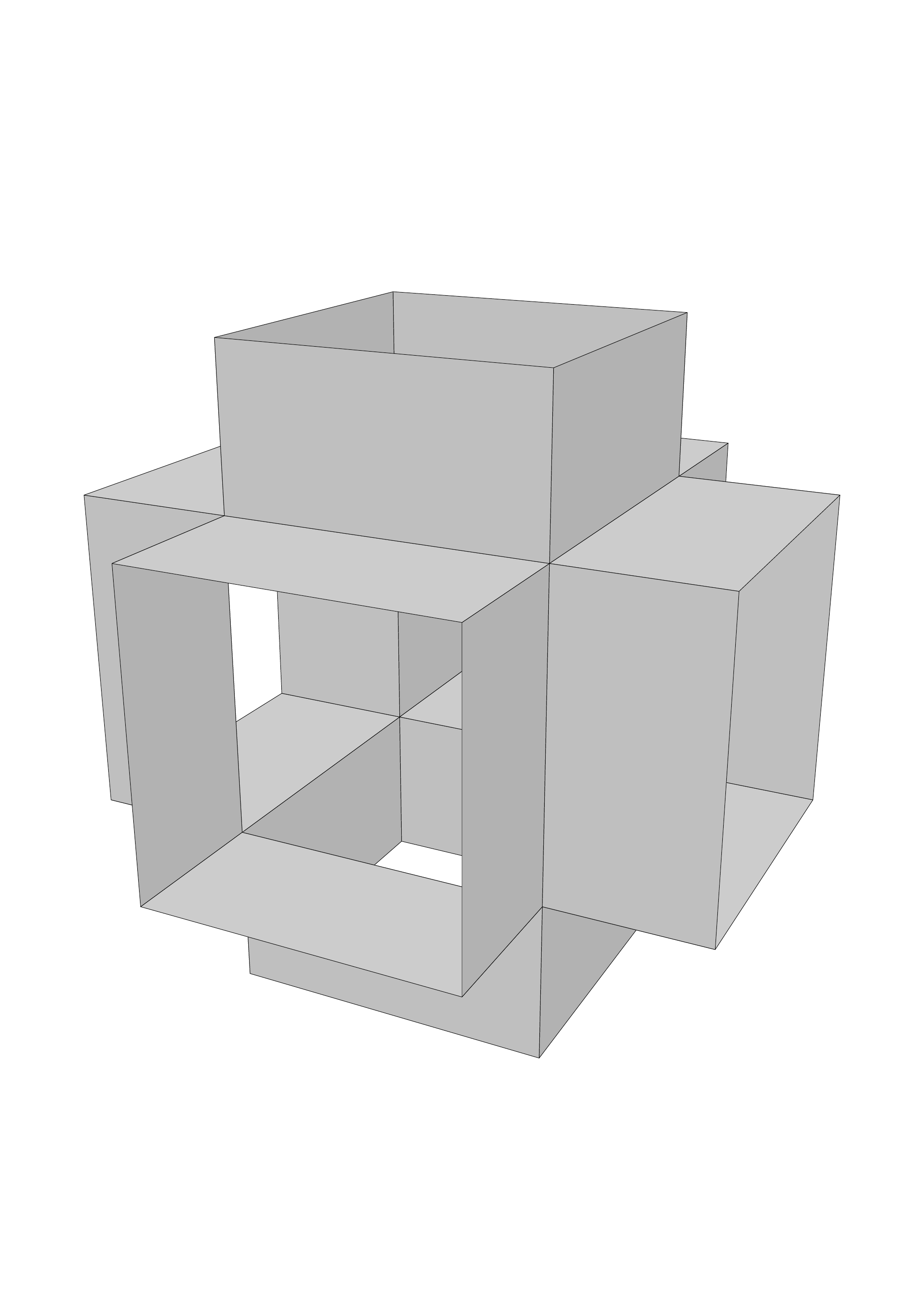}
\vspace{-2cm}
\caption{A fundamental domain of 3-periodic surface}
\label{sur}

\end{figure}

Let us note also that Novikov's problem can be easily translated to the language of measured foliations on a surface: one can consider $\mathbb T^{3}=\pi(\mathbb R^{3})$ and surface $M=\pi(\hat M))$ in the torus. Then, the family of planes indicates a foliation $F$ on the torus (and on $M$), determined by the 1-form $\omega=H_{1}dx_{1}+H_{2}dx_{2}+H_{3}dx_{3}$. 
So, we are interested in the possible behavior of leaves of $F$. In these terms the chaotic case can be described as follows: there exists a component of genus $3$ such that the foliation is minimal on it. 

The same type of foliations was studied in \cite{McM} where it was called a minimal foliation with zero flux. The foliation from \cite{AY} that is used in \cite{McM} as a typical representative of this family is a chaotic regime (and, moreover, belongs to the particular class we deal with, see Figure \ref{sec}). 

\subsection{Systems of isometries}
The notion of systems of isometries was introduced by G. Levitt, D. Gaboriau and F. Paulin in \cite {GLP}.
\begin{definition}
\emph A {system of isometries} $S$ consists of a finite disjoint union $D$ of compact subintervals of the real line $\mathbb R$ (\emph{support multi-interval}) together with a finite number $n$ of partially defined orientation preserving isometries $\phi_{j}: A_{j} \to B_{j}$, where each base of $A_{j}, B_{j}$ is a compact subinterval of $D$. 
\end{definition}
In the current paper $D$ will always be a single interval.

Systems of isometries can be considered as a generalization of IET and interval translation mappings (ITM), so it is natural to define the orbit of such system in the same way as it was done for IET.
\begin{definition}
Two points $x,y$ in $D$ belong to the same $S$-orbit if there exists a word in $\phi_{i}$ and $\phi^{-1}_{i}$ sending $x$ to $y$.
\end{definition}
\noindent We will denote the orbit of the point $x$ by $\Gamma_x$.

Now one can define the equivalence relationship on systems of isometries. 
\begin{definition}
Two systems of isometries $S$ and $S^{'}$ with support intervals $[A,B]$ and $[A^{'},B^{'}]$, respectively, are called \emph{equivalent}, if there is a real number $t \in {\mathbb R}$ and an interval $[A_{0},B_{0}] \subset [A,B] \cap  [A^{'}+t,B^{'}+t]$ such that
\begin {enumerate}
\item every orbit of each of the systems $S$ and $S^{'}+t$ contains a point lying in $[A_{0},B_{0}]$
\item for each point $x\in [A_{0},B_{0}]$ the orbits $\Gamma_{x}(S)$ and $\Gamma_{x}(S^{'})$ as graphs are homotopy equivalent through mappings that are identical on $[A_{0},B_{0}]$ and such that the full preimage of each vertex contains only finitely many vertices of the other graph.
\end {enumerate}
\end{definition}

\noindent One can check that it is an equivalence relation. Informally, the definition means two systems of isometries $S$ and $S'$ are equivalent if there exists $t\in\mathbb R$ such that $S$ and $S'$ indicate the same equivalence relation on $[A,B]\cap[A'+t, B'+t]$. It implies that two equivalent systems have the same behavior of orbits. 

In the current paper we concentrate on a particular class of systems of isometries.
\begin{definition}\label{def1}
A system of isometries $S$ is called \emph{special} if the following restrictions hold:
\begin{itemize}
\item $D$ is an interval of the real line, say, $[0,1]$;
\item $n=3$;
\item all $A_{i}$ start in $0$;
\item all $B_{i}$ end in $1$;
\item $\Sigma_{i=1}^{3}|A_{i}|=1,$ where $|A|$ means the length of the subinterval $A$;
\item $|A_{1}|>|A_{2}|>|A_{3}|.$
\end{itemize}
\end{definition}

See, for example, Figure \ref{S}.
\begin{figure}[h]
\centering
\vspace{-4cm}
\includegraphics[height=10cm, width=8cm]{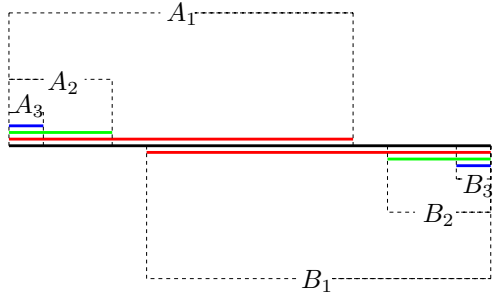}
\put(-142,173){$A_{1}$}
\put(-91,72){$B_{1}$}
\put(-187,147){$A_{2}$}
\put(-45,97){$B_{2}$}
\put(-200,138){$A_{3}$}
\put(-30,108){$B_{3}$}
\vspace{-2.5cm}
\caption{System of isometries}
\label{S}
\end{figure}

So, any special system $S$ can be described in the following way:
\begin{equation}
\begin{split}
\label{100}
S=([0,a+b+c];& [0,a]\leftrightarrow[b+c,a+b+c],\\
                    & [0,b]\leftrightarrow[a+c,a+b+c], \\
                    & [0,c]\leftrightarrow[a+b,a+b+c])
\end{split}
\end{equation}
with $a>b>c>0$, $a+b+c=1$.

We are only interested in the most generic case of special
system of isometries in the sense that no integral linear relation holds for the parameters $a, b, c$ except those that must hold by definition.
 
We concentrate on special systems of isometries of \emph{thin type}. By the latter we mean a system of isometries for which an equivalent system may have arbitrarily small support (or, equivalently, all orbits are everywhere dense). Thin type was discovered by Levitt in \cite{L} and sometimes is mentioned as \emph{Levitt} or \emph{exotic} case. The term ``thin" was introduced by M. Bestvina and M. Feign (see \cite{BF} for a formal definition of band complex of thin type in terms of the Rips machine). 

For the future construction we need the following obvious 
\begin{proposition}\label{thin}
A special system of isometries $S$ is not of thin type if $a<b+c$.
\end{proposition}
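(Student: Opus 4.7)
The plan is to show that when $a<b+c$ there is a nonempty open subinterval of $[0,1]$ consisting entirely of singleton orbits, so the orbits cannot all be dense and $S$ therefore cannot be of thin type.

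First I would exploit the nesting forced by the inequality $a>b>c$. The three domains $A_1=[0,a]$, $A_2=[0,b]$, $A_3=[0,c]$ satisfy $A_3\subset A_2\subset A_1$, so $\bigcup_{j=1}^{3} A_j = [0,a]$. Symmetrically, with $B_1=[b+c,1]$, $B_2=[a+c,1]$, $B_3=[a+b,1]$, the inequalities $b+c<a+c<a+b$ give $B_3\subset B_2\subset B_1$, hence $\bigcup_{j=1}^{3} B_j=[b+c,1]$. Thus the only part of $D=[0,1]$ on which any $\phi_j$ or $\phi_j^{-1}$ is defined is the set $[0,a]\cup[b+c,1]$.

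Next, the hypothesis $a<b+c$ (equivalently $a<\tfrac12$) makes the open interval $I=(a,b+c)$ nonempty, and by the preceding paragraph $I$ is disjoint from every $A_j$ and every $B_j$. So for any $x\in I$, no partial isometry $\phi_j$ nor its inverse applies to $x$, and the $S$-orbit $\Gamma_x$ is the singleton $\{x\}$. A singleton inside the interior of $[0,1]$ is plainly not dense in $[0,1]$, which by the characterization of thin type recalled just before the proposition (all orbits everywhere dense) shows that $S$ is not of thin type.

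There is essentially no obstacle: the proof is a direct observation, and the only subtlety is to verify that the strict ordering $a>b>c$ collapses the unions $\bigcup A_j$ and $\bigcup B_j$ to the single intervals $[0,a]$ and $[b+c,1]$, creating the gap $(a,b+c)$ of inactive points as soon as $a<b+c$. If one prefers the equivalent formulation of thin type in terms of support, the same gap allows one to pass to an equivalent system with support contained in $[0,a]\cup[b+c,1]$, of total length $2a<1$, and no further reduction below $2a$ is possible because each point in $(a,b+c)$ is already a trivial orbit component.
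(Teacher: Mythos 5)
Your main argument is correct and matches the paper's intent: Subsection 3.1 (to which the proposition refers) defines a \emph{hole} to be a region of the support interval not covered by any $A_j$ or $B_j$, notes that its presence forces finite orbits and hence rules out thin type, and remarks that a hole appears when $a<b+c$; you observe, slightly more directly, that the gap $(a,b+c)$ is already such a hole before any Rauzy step, since $a>b>c$ collapses $\bigcup A_j$ to $[0,a]$ and $\bigcup B_j$ to $[b+c,1]$, so every $x\in(a,b+c)$ has the trivially non-dense singleton orbit $\{x\}$. Your closing remark about support lengths is imprecise (the relevant lower bound for the support of any equivalent system is the gap length $b+c-a$, since the whole gap must lie inside any interval $[A_0,B_0]$ witnessing an equivalence, and $2a$ is not the right threshold), but it is also unnecessary: the singleton-orbit observation already finishes the proof via the ``all orbits dense'' characterization of thin type.
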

See Subsection \ref{RI} for a formal proof.

As it was mentioned above, Dynnikov in \cite{D} showed a strategy how to construct a symmetric 3-periodic surface whose intersections with a family of planes of fixed direction have chaotic behavior using a system of isometry of thin type (see subsection \ref{DC} for details).

\subsection{Suspension complex}
Here we recall briefly the construction of the suspension complex for systems of isometries from \cite{GLP}. It can be considered as an analogue of zippered rectangles model suggested by W. Veech (\cite{V}). 

With each special system of isometries we can associate a foliated 2-complex $\Sigma$ (in terms of $\mathbb R$-trees theory, it is a \emph{band complex}). Start with the disjoint union of the support interval (foliated by points) and strips $A_{j}\times [0, 1]$ (foliated by ${*} \times [0, 1]$). We get $\Sigma$ by glueing $A_{j}\times [0, 1]$ to $D$, identifying each $(t,0) \in A_{j}\times {0}$ with $(t,0) \in A_{j} \subset D$ and each $(t,1) \in A_{j}$  with $\phi_{j}(t) \in B_{j}\subset D$. We will identify $D$ with its image in $\Sigma$.
Thus, one gets a 2-dimensional complex with a vertical foliation on it.

Our family of band complexes is a particular class of what appears in geometric group theory as an instrument for describing actions of free groups on $\mathbb R$-trees (see, for example, \cite{BF} for details).

\subsection{Dynnikov's model}\label{DC}
In the current section we explain how to construct a chaotic regime if one has a special system of isometries $S$ of thin type. The construction for some more generic case was introduced by Dynnikov in \cite{D} (see also \cite{S}). 

Indeed, we consider a piecewise smooth surface in the 3-torus $\mathbb T^{3}$ that will be described below, and study the asymptotic behavior
of sections of $\mathbb Z^{3}$-covering of this surface in $\mathbb R^{3}$ by a family of parallel planes $\alpha : H_{1}x_{1} + H_{2}x_{2}+H_{3}x_{3} = const$, where $H$ is some fixed covector.
For technical reasons, we will vary not the covector $H$ but the coordinate system and the fundamental domain of the lattice in $\mathbb R^{3}$ so as to have the coordinates of $H$ constant and
equal to $(0, 1, 0)$.  

The main idea of the construction is as follows. There is a band complex $\Sigma$ associated with the system $S$. Using the parameters of the system $S$ one can define a lattice $\Lambda$ and a fundamental domain of the surface $M$ that provide a triply periodic surface $\hat M$ which is an abelian cover of $M$ with respect to the lattice $\Lambda$. We do it in a way that the sections of $\hat M$ by the fixed family of planes have the sections of the abelian cover of $\Sigma$ as a deformation retract. 

In order to construct $M$ one has to replace the support interval of the complex $\Sigma$ by a cylinder and make three cuts that are posed in accordance with the position of bands; then the boundary of these cuts that correspond to the same band should be glued to each other (or, equivalently, glued up by cylinders). Then the saddles of the foliation appear on the ends of cuts. The same idea was used in \cite{DS} (see Figure 7 there for the illustration). 
We proceed with the detailed description.

\begin{figure}[h]
\vspace{-2.5cm}
\includegraphics[width=7cm,height=10cm]{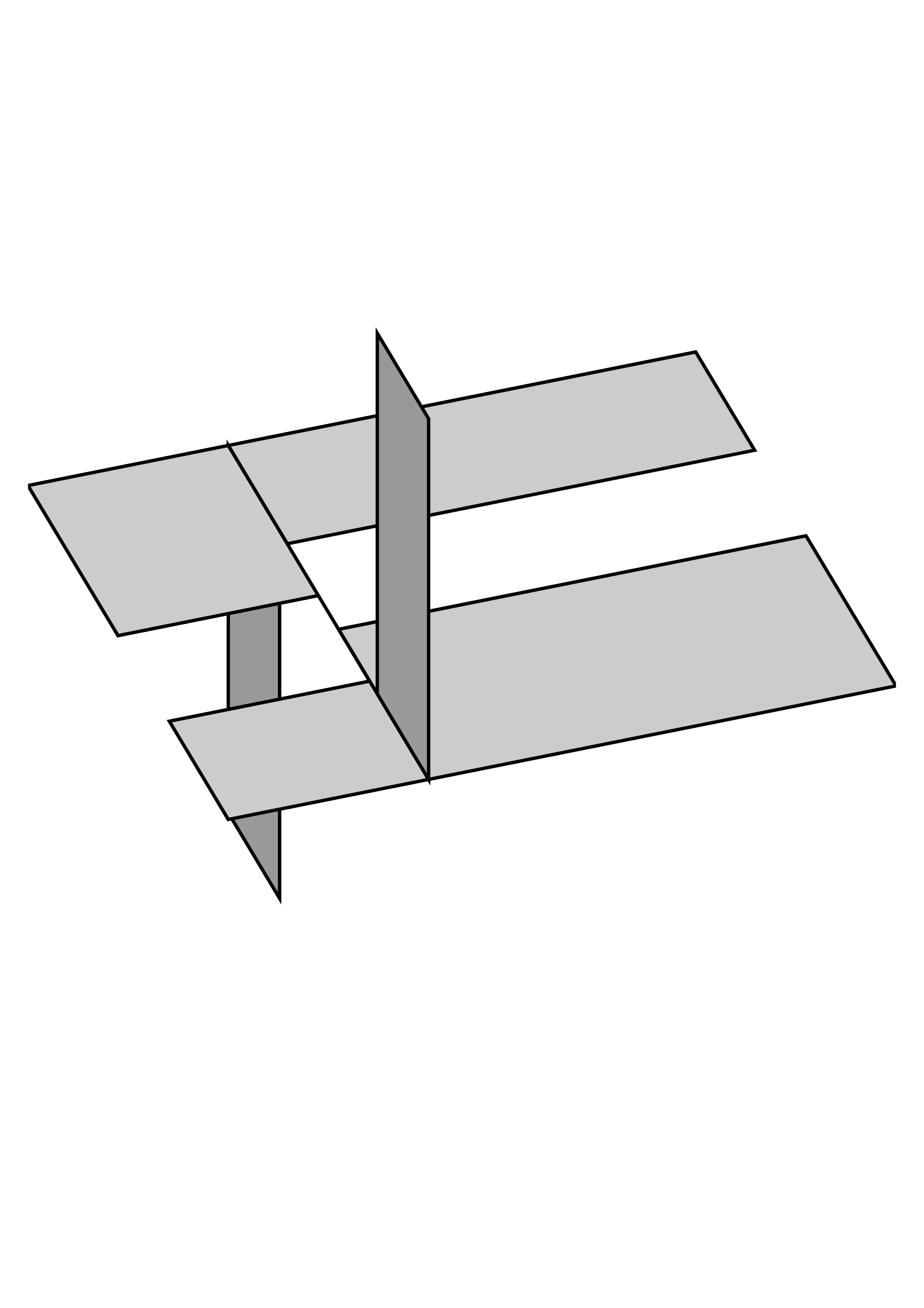}
\put(-113,135){\vector(0,1){15}}
\put(-100,120){\vector(4,1){15}}
\put(-120,120){\vector(-4,-1){15}}
\put(-120,185){\vector(-4,-1){15}}
\put(-170,175){\vector(4,1){15}}
\put(-145,132){\vector(0,1){15}}
\put(-107,195){\vector(0,1){28}}
\put(-9,134.6){\vector(4,1){25}}
\put(-147,182){\vector(-1,2){7}}
\put(-105,210){Oz}
\put(2,130){Ox}
\put(-154,200){Oy}
\vspace{-3cm}
\caption{Suspension complex}
\label{C}
\end{figure}

\begin{figure}[h]
\centering
\vspace{-24cm}
\hspace{-5cm}
\includegraphics[scale=1.1]{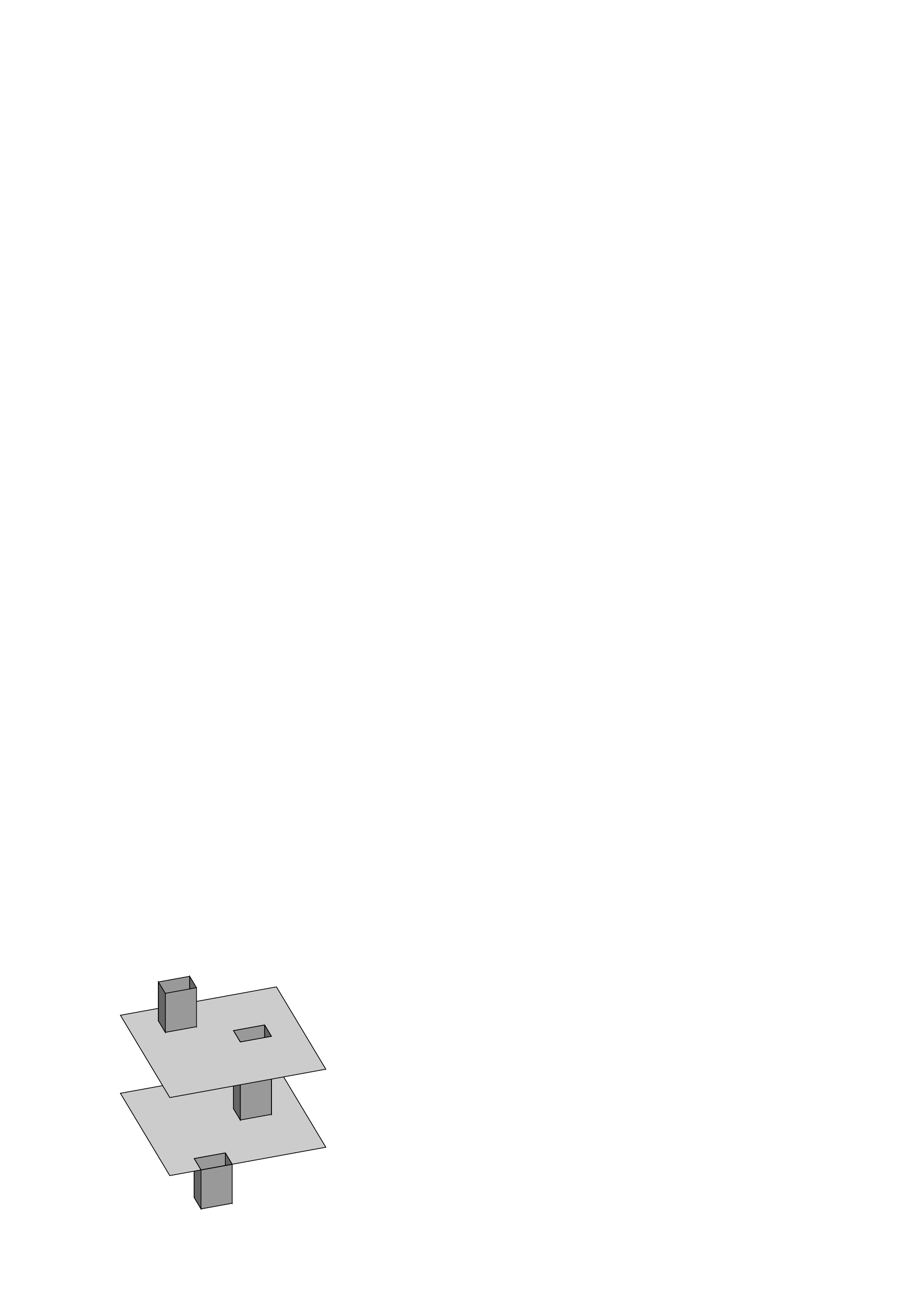}
\vspace{-3cm}
\caption{The surface $M_{0}$}
\label{f}
\end{figure}

We take $S$ identified by $a,b,c$ as was described above such that $S$ is of thin type. Let us introduce the following notation for the rectangles in the
plane $\mathbb R^{2}$:
$$T_{1} = [0, 1]\times [0, a + b + 2c];$$
$$T_{2} = [1/5, 2/5]\times [0, c];$$
$$T_{3} = [3/5, 4/5]\times [a, a + c];$$
$$T_{4} = [1/5, 2/5]\times [a + b , a + b + c].$$
One can easily check that $T_{2}, T_{3}, T_{4}\subseteq [0,1]\times [0,1].$
As a fundamental domain $M_{0}$ (see Figure \ref{f}) of the surface $\hat M$, we take the following piecewise linear surface:
$$(T_{1} \setminus (T_{2}\cup T_{3}))\times\frac{1}{4} \cup (T_{1} \setminus (T_{3}\cup T_{4})) \times \frac{3}{4} \cup \partial T_{2} \times [0, \frac{1}{4}] \cup\partial T_{3}\times[\frac{1}{4}, \frac{3}{4}] \cup \partial T_{4} \times [\frac{3}{4}, 1].$$

The lattice $\Lambda$ is spanned by the following three vectors:
$$e_{1} = (1, -b-c, 0), e_{2} = (1, a + c, 0), e_{3} = (0, a + b,1).$$
The covering surface $\hat M$ is equal to $M_{0} + G$, where G is a translation group based on $\Lambda$ (see Figure \ref{f2}).

\begin{figure}
\centering
\vspace{-10cm}
\hspace{-2.5cm}
\includegraphics[scale=0.7]{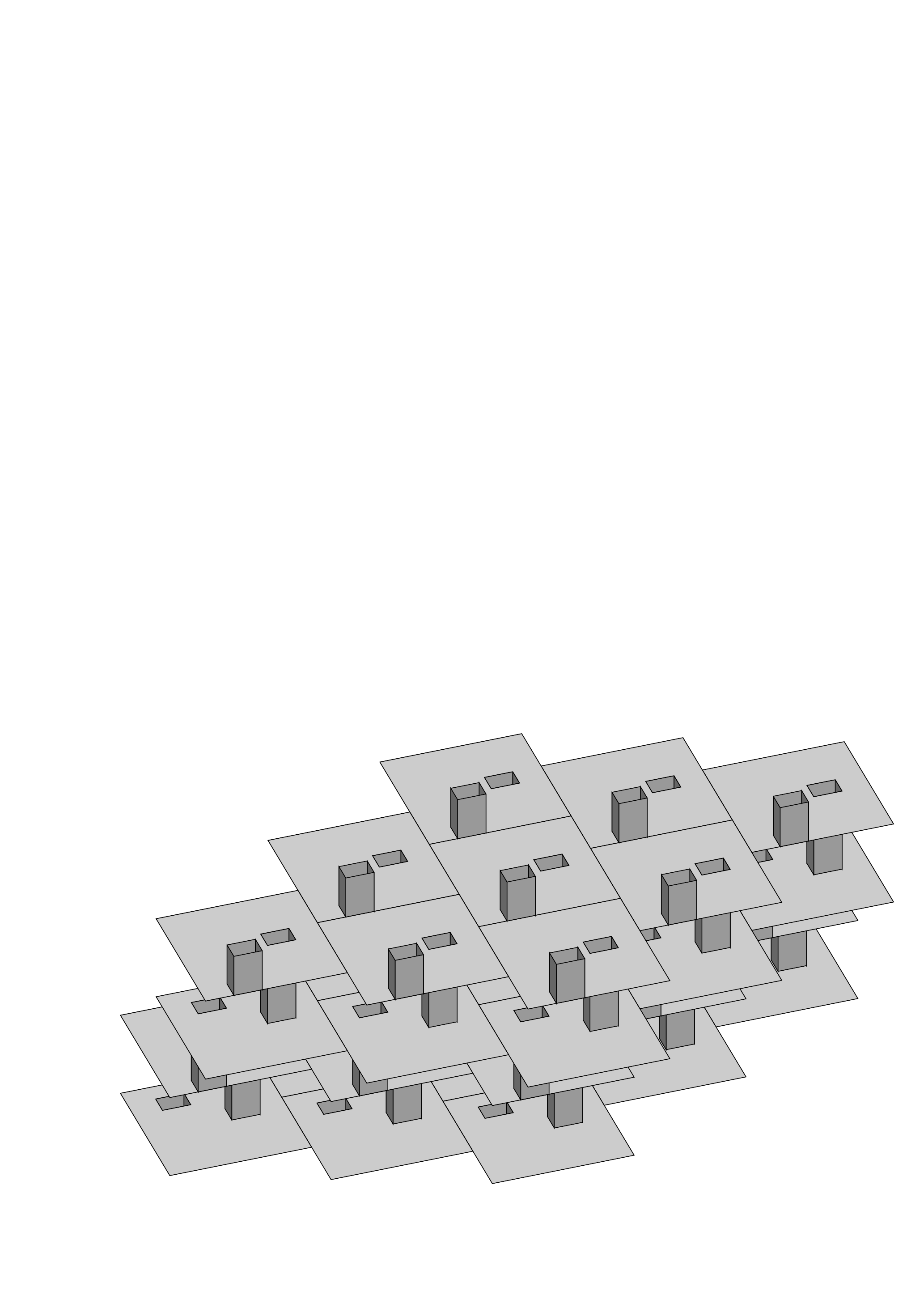}
\vspace{-1cm}
\caption{The surface $\hat M$}
\label{f2}
\end{figure}
As mentioned above, $H=(0,1,0)$.

\begin{theorem}\label{d08}[Dynnikov, 2008]
Let  $S$ be a special system of isometries of thin type identified by the parameters $(a, b, c)$. Then, the sections of the surface $\hat M$, constructed as above with these values
of the parameters, by any plane orthogonal to H = (0, 1, 0) are chaotic.
\end{theorem}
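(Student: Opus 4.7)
Dynnikov's construction is engineered so that plane sections of $\hat M$ by planes of the form $y=\mathrm{const}$ admit the corresponding horizontal sections of the $\Lambda$-abelian cover of the band complex $\Sigma$ associated to $S$ as deformation retracts; in particular, they are homotopy equivalent to unions of leaves of the vertical foliation on this cover. The plan is to (i) verify that $\hat M$ is a closed triply periodic surface with $M=\hat M/\Lambda$ of genus $3$, (ii) establish the deformation-retract correspondence between plane sections of $\hat M$ and horizontal sections of the abelian cover of $\Sigma$, and (iii) invoke the thin-type hypothesis to conclude chaoticity.

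For (i), the boundary of $M_0$ consists of the outer boundary of $T_1$ at $z=\tfrac14,\tfrac34$ together with the loops $\partial T_2\times\{0\}$ and $\partial T_4\times\{1\}$. Since $T_2+(0,a+b,0)=T_4$ in the $xy$-plane, the generator $e_3=(0,a+b,1)$ identifies $\partial T_2\times\{0\}$ with $\partial T_4\times\{1\}$ of the $e_3$-translate, gluing the top and bottom of consecutive copies of $M_0$. The outer $x$-edges of $T_1$ glue via $e_1=(1,-b-c,0)$ and $e_2=(1,a+c,0)$, pasting the $x=1$ edge of $T_1$ onto the $x=0$ edge of an adjacent copy with the appropriate $y$-shift. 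An Euler-characteristic count (two floor rectangles, three cylindrical sides, glued along boundary loops) then shows that $M$ has genus $3$.

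For (ii), fix a regular $y_0$ avoiding the critical heights $\{0,c,a,a+c,a+b,a+b+c\}$ and let $P=\hat M\cap\{y=y_0\}$. Then $P$ is a $1$-complex in the plane $y=y_0$ consisting of horizontal segments at $z=\tfrac14,\tfrac34$ coming from slices of the two floors, joined by vertical segments coming from those cylinders $\partial T_j\times I$ whose $y$-range contains $y_0$, together with all $\Lambda$-translates. Matching this arrangement against the three bands of $\Sigma$ (via the correspondence that places the band associated to $A_j$ at the $y$-range $[0,c]$, $[a,a+c]$, or $[a+b,a+b+c]$) and collapsing the $z$-thickening onto the support interval of $\Sigma$, one verifies that $P$ deformation-retracts onto the corresponding horizontal section of the abelian $\Lambda$-cover of $\Sigma$, i.e., onto a union of leaves of the vertical foliation.

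For (iii), the thin-type hypothesis implies that every $S$-orbit is dense in $D$, so every leaf of the vertical foliation on the abelian cover is an infinite graph not contained in any strip of bounded width (otherwise the corresponding orbit would lie in a proper invariant subset of $D$, contradicting density). Via the deformation retract of (ii), the same holds for $P$: it has unbounded components, none of them confined to a strip, and the closure of $\pi(P)$ fills a subsurface of $M$ of genus $3>1$, so the section is chaotic by definition. The principal obstacle is (ii), which requires a careful case-by-case analysis according to the position of $y_0$ relative to the three $y$-ranges of the $T_j$, and bookkeeping of how the cylinder-to-cylinder transitions in $\hat M$ match the identifications in $\Sigma$ under the $\Lambda$-cover.
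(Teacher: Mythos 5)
Your steps (i) and (ii) are sound and correspond to what the paper does: the genus count, the construction of the lattice $\Lambda$, and the deformation retract statement (Lemma~\ref{dynlem}) are all part of the paper's argument. The proposal diverges at step (iii), and that is where the gap lies.

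Recall the definition: the section is chaotic if some connected component has the closure of its projection equal to a subsurface of $M$ of genus $>1$. The paper proves this by establishing that the foliation $F$ induced by $dx_2$ on $M$ is \emph{minimal}, i.e.\ the closure of every leaf is all of $M$ (genus $3$). Minimality is proved in two parts: (a) saddle connections between the two distinct saddles are ruled out by the direct observation that these saddles lie at different $x_2$-heights; (b) closed leaves and separatrix cycles (loops at a single saddle) are ruled out via the band complex: if they existed, the sections of $\hat M_1$, hence of $\hat\Sigma'$, would have compact or non-simply-connected regular components, which translates to $S$ having an essential set of finite or non-simply-connected orbits, contradicting the thin-type hypothesis.

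Your step (iii) replaces this with: density of $S$-orbits $\Rightarrow$ leaves of the vertical foliation on the abelian cover are unbounded and not confined to strips $\Rightarrow$ the closure of $\pi(P)$ fills a genus-$3$ subsurface. The last implication is asserted, not proved, and it is exactly the content of the theorem. ``Unbounded and not confined to any strip of finite width'' does \emph{not} by itself imply that the closure of the projection has genus $>1$: the paper explicitly describes Tsarev's example, where the sections are unbounded and escape every strip of finite width, yet the regime is not chaotic. So your argument proves only the unboundedness, not the genus condition. To close the gap you need, as the paper does, to show that $F$ has no closed leaves, no separatrix cycles, and no saddle connections between distinct saddles; the first two come from translating ``compact or non-simply-connected section components'' into properties of $S$-orbits and invoking thin type (this is close in spirit to your density argument but must be stated at the level of components of $\hat\Sigma'\cap\alpha$, not just ``not in a strip''), and the third is a separate geometric check on the $x_2$-coordinates of the two saddles, which your proposal omits entirely.
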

\begin{remark}
See Figure \ref{sec} for the example of the chaotic plane section.
\end{remark}
\begin{proof}
Our proof repeats the proof of the similar theorem in \cite{D}.
Let us denote by M the image of the projection of $\hat M$ in the torus $\mathbb T^{3}=\mathbb R^{3}/\Lambda$: $M = \pi(\hat M)$. For
studying the sections $\alpha \cap \hat M$, we consider the foliation $F$ on $M$ defined by a restriction to $M$ of the 1-form $dx_{2}$. It is easy to see that
the surface $M$ has genus 3. We need to show that the foliation $F$ is minimal that is the closure of any leaf of $F$ coincides with $M$. In other words, one needs to check that $F$ does not have closed leaves or saddle connection cycles. 

It can be directly checked that the two saddles we work with belong to different planes of the form $x_{2} = const$; hence, saddle connections between two different saddles do not exist.

In order to show that $F$ does not have closed leaves and separatrix cycles, we consider not the surface $M$ itself but one of the two parts into which it cuts the torus $\mathbb T^{3}$. Both parts are filled handlebodies of
genus $3$. We denote one of them (which contains a point $\pi(0, 0, 1/2)$) by $M_{1}$ and the $\mathbb Z^{3}$-covering
of $M_{1}$ by $\hat M_{1}$. Now, we consider some natural modification of the suspension complex that we constructed in the previous section. $\Sigma^{'}$  is a 2-dimensional complex in $\mathbb T^{3}$ consisting of 3 rectangles:
$$P_{1} = \pi([3/10, 13/10]\times [0,a]\times{1/2})$$
$$P_{2} = \pi([3/10, 13/10]\times [a+c,a+b+c]\times{1/2})$$
$$P_{3} = \pi({3/10}\times[0,c]\times [1/2, 3/2])$$

Figure \ref{C} depicts a particular model of $\Sigma^{'}$ viewed as subset of the torus. 

The only difference between $\Sigma$ and $\Sigma'$ is that we glued up both of horizontal sides of rectangles to the support interval (in case of $\Sigma$) while for $\Sigma^{'}$ we just leave them free. Since we work with a system of isometries of thin type, due to Proposition \ref{thin} this difference disappears when we consider an abelian cover of $\Sigma^{'}$ (let us denote it by $\hat \Sigma^{'}$, see Figure \ref{complex}). One can directly check that the following holds:
\begin{lemma}\label{dynlem}
For any plane $\alpha$ defined by an equation of the form
$x_{2} = const$, the section $\hat M\cap\alpha$ has $\hat\Sigma^{'}\cap\alpha$ as a deformation retract and the restriction of the form $\omega = dx_{2}$ to $\Sigma^{'}$ defines a vertical foliation on the band complex (see figure \ref{DR} where sections $\hat M\cap\alpha$ are given by gray rectangles and sections $\hat\Sigma^{'}\cap\alpha$ are represented by solid black lines).
Moreover, the deformation is finite and uniformly bounded.
\end{lemma}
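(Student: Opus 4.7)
The strategy is to compute both sides of the claimed retract explicitly for a generic plane $\alpha=\{x_2=c\}$ in a fundamental domain, verify the retraction there by an elementary deformation, and then lift to the abelian cover using $\mathbb Z^3$-equivariance.

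First, I would classify the slice $\hat M\cap\alpha$ according to which of the four $y$-ranges the value $c$ falls in, namely relative to the thresholds $0,\,c,\,a,\,a+c,\,a+b,\,a+b+c$ that come from the vertical extents of $T_{1},T_{2},T_{3},T_{4}$. Inside the fundamental domain $M_{0}$, a horizontal plane either (i) slices only the two ``roof'' pieces $(T_{1}\setminus(T_{2}\cup T_{3}))\times\{1/4\}$ and $(T_{1}\setminus(T_{3}\cup T_{4}))\times\{3/4\}$ along horizontal line segments whose $x$-range is $[0,1]$ with the appropriate $x$-slits removed, or (ii) in addition, meets the vertical ``chimneys'' $\partial T_{j}\times[\cdot,\cdot]$ in a pair of short vertical segments connecting the two roof segments. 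Thus each connected piece of $\hat M\cap\alpha$ is, after translation by $\Lambda$, one of finitely many rectangles of uniformly bounded dimensions in the $(x,z)$-plane at height $y=c$.

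Next I would compute $\hat\Sigma'\cap\alpha$ in the same way. Since $P_{1}=\pi([3/10,13/10]\times[0,a]\times\{1/2\})$ and $P_{2}=\pi([3/10,13/10]\times[a+c,a+b+c]\times\{1/2\})$ lie in the single horizontal level $z=1/2$ while $P_{3}=\pi(\{3/10\}\times[0,c]\times[1/2,3/2])$ is a vertical strip, the slice at $y=c$ consists of horizontal $x$-segments at $z=1/2$ coming from $P_{1}$ or $P_{2}$ (when $c$ lies in the corresponding $y$-range) together, when $c\in[0,c]$ (in the notation of the system), with a vertical $z$-segment from $P_{3}$. Comparing with step 1, these segments sit inside the corresponding rectangles of $\hat M\cap\alpha$: the horizontal segments $y=c$, $z=1/2$ lie in the interior of the rectangles whose $z$-span is $[1/4,3/4]$, and the vertical segment of $P_{3}$ runs up the chimney $\partial T_{2}\times[0,1/4]$ respectively $\partial T_{4}\times[3/4,1]$. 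The inclusion $\hat\Sigma'\cap\alpha\hookrightarrow\hat M\cap\alpha$ is therefore manifest in each fundamental piece.

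Now I construct the deformation retract on a single rectangle of $\hat M\cap\alpha$ by the obvious ``squeeze'' that collapses the $(x,z)$-rectangle onto its middle segment (parallel to the $x$-axis at $z=1/2$) and, along each chimney, collapses the circular cross-section onto the single point of $P_{3}$ sitting on it. Each such local retraction moves points by at most a constant depending on the diameters of the $T_{i}$ and the heights of the chimneys, i.e. by a uniformly bounded amount in $\mathbb R^{3}$. Since everything is equivariant under the translation lattice $\Lambda=G$, concatenating these local retractions gives a global deformation retraction of $\hat M\cap\alpha$ onto $\hat\Sigma'\cap\alpha$ with uniformly bounded tracks, proving the first half of the lemma. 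Finally, the form $\omega=dx_{2}$ restricted to each of $P_{1},P_{2},P_{3}$ is nontrivial along the $y$-direction of those rectangles, and its kernel foliation is precisely the union of horizontal slices $\{y=\mathrm{const}\}$; this is the vertical foliation on $\Sigma'$ claimed.

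The main delicate point I anticipate is bookkeeping of the connected components at the critical values $c\in\{0,\,c,\,a,\,a+c,\,a+b,\,a+b+c\}$, where a chimney begins or ends and the topology of the slice changes; away from these values the retraction is trivial, and the density of non-critical $c$ together with continuity extends everything to the boundary cases without affecting the uniform bound.
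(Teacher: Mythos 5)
There is no proof to compare yours against: the paper introduces the lemma with ``One can directly check that the following holds,'' and the only substantiation offered is the pair of Figures~\ref{complex} and \ref{DR}. Your proposal --- slice the explicit fundamental domain and the three bands $P_i$ by a plane $\{x_2=\mathrm{const}\}$, observe that the picture is $\Lambda$-periodic with finitely many pieces of uniformly bounded size, retract piece by piece, and conclude by equivariance --- is the natural way to carry out the deferred check and would go through. Two points, though, need to be fixed before it is an actual proof.

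The first is a conflation, present already in the paper's wording, between the surface $\hat M$ and the filled handlebody $\hat M_1$. The slice $\hat M\cap\alpha$ of a closed surface by a plane is a one-manifold (generically a disjoint union of boundary curves of the rectangles you describe), and a circle does not deformation-retract onto a proper subgraph, so the statement as literally written is ill-posed. What retracts onto $\hat\Sigma'\cap\alpha$ is the slice $\hat M_1\cap\alpha$ of the solid handlebody --- the gray regions in Figure~\ref{DR} are two-dimensional --- and this is also what the surrounding proof of Theorem~\ref{d08} actually uses. Your text drifts between the two readings: the roof and chimney slices you compute are one-dimensional, yet the ``squeeze'' you then apply collapses a two-dimensional rectangle onto its midline. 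Commit to the filled picture $\hat M_1\cap\alpha$ throughout (or, if you insist on $\hat M$, replace ``deformation retract'' by ``uniformly bounded homotopy between the two one-complexes,'' which is the form in which the lemma is invoked in Theorem~\ref{rate}).

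The second is that you declare the inclusion $\hat\Sigma'\cap\alpha\hookrightarrow\hat M_1\cap\alpha$ ``manifest,'' but it requires $\Sigma'\subset M_1$ and deserves a check rather than a wave. For $P_1,P_2$ at $z=1/2$ it is immediate. For $P_3$ one must see that the strip threads the $T_2$-/$T_4$-tubes instead of hitting a roof, and with the coordinates as printed this actually fails: at $z=3/4$ the points $(3/10,y,3/4)$ with $y\in[0,c]$ lie on the top roof $(T_1\setminus(T_3\cup T_4))\times\{3/4\}$, since $[0,c]$ is disjoint from $T_4$'s $y$-range $[a+b,a+b+c]$. The strip must descend through the $T_2$-hole of the bottom roof (i.e. $z\in[-1/2,1/2]$), or equivalently have its $y$-range shifted by $a+b$; as written it punctures the surface. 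This is exactly the kind of thing a ``direct check'' is supposed to flush out, so your sanity check should be carried through rather than anticipated and dismissed. Finally, a cosmetic but real defect: you use $c$ both for the system parameter and for the height of the plane, yielding the unreadable condition ``$c\in[0,c]$''; introduce a fresh letter for the slicing height and list the threshold $a+b+2c$ (the top of $T_1$) alongside the others.
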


\begin{figure}
\centering
\vspace{-10cm}
\hspace{-2.5cm}
\includegraphics[scale=0.7]{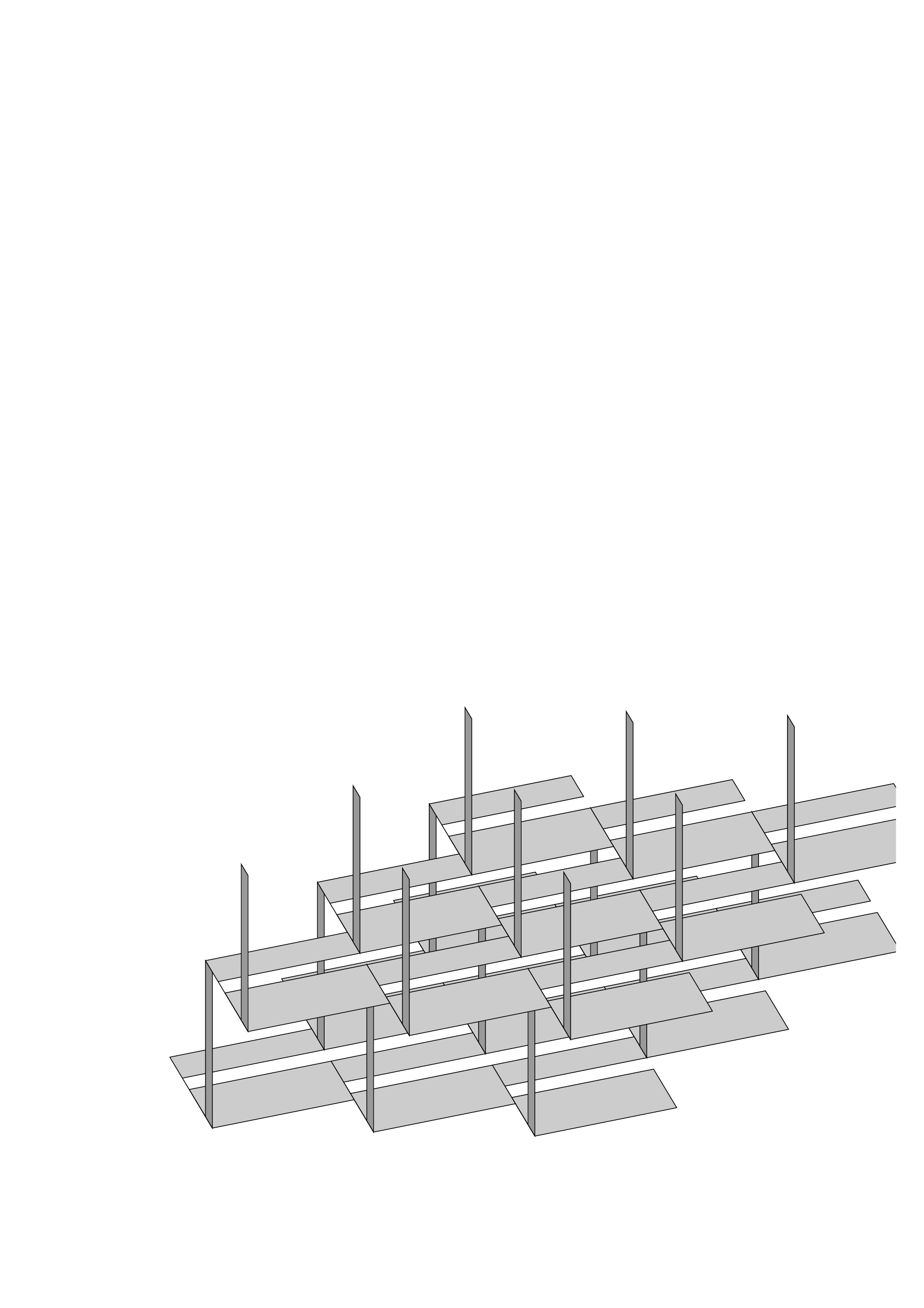}
\vspace{-3cm}
\put(-160,88){$\tilde a$}
\put(-230,91){$\tilde a$}
\put(-310,94){$\tilde a$}
\put(-230,110){$\tilde b$}
\put(-310,110){$\tilde b$}
\put(-160,108){$\tilde b$}
\put(-342,120){$\tilde c$}
\put(-338,122){\vector(1,0){10}}
\put(-327,100){\vector(0,1){30}}
\put(-310,90){\vector(4,1){10}}
\put(-325,170){$\tilde c$}
\put(-321,172){\vector(1,0){10}}
\put(-290,120){\vector(-4,-1){10}}
\caption{Complex $\hat\Sigma^{'}$}
\label{complex}
\end{figure}

\begin{figure}
\centering
\vspace{-1cm}
\includegraphics[scale=0.4]{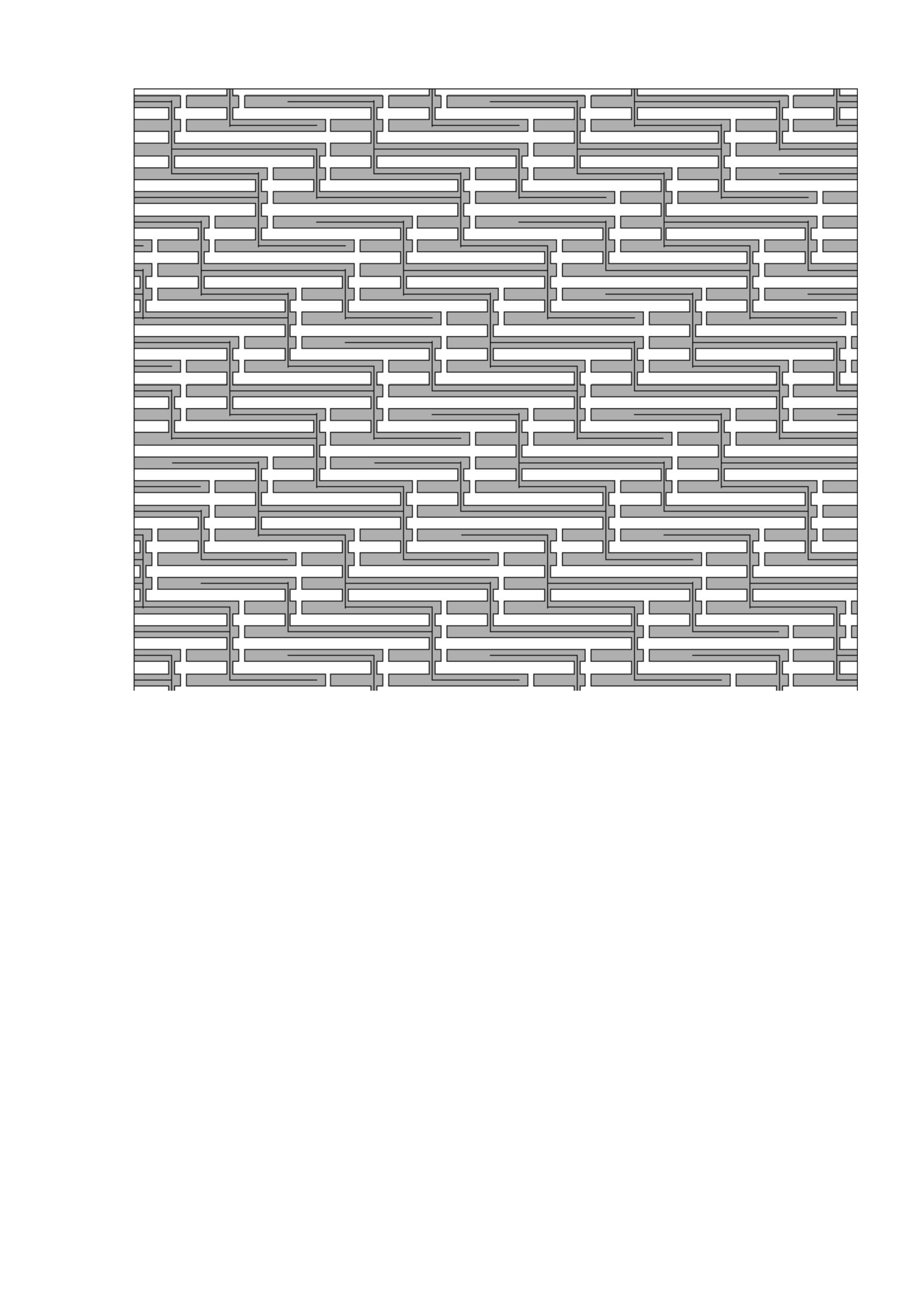}
\vspace{-6cm}
\caption{The section $\hat M\cap\alpha$ has $\hat\Sigma^{'}\cap\alpha$ as a deformation retract}
\label{DR}
\end{figure}

The foliation $F$ has closed leaves or separatrix cycles if and only if the sections of the manifold with boundary
 $\hat M_{1}$ by planes  $x_{2} = const$ have either compact or non-simply-connected regular components.
Hence, the same has to be true after replacing  $\hat M_{1}$ by $ \hat\Sigma^{'}$. Furthermore, the latter can be
reformulated in terms of the system $S$ by saying that it must have an essential set of finite
orbits and an essential set of non-simply connected ones, but we know that $S$ is minimal since $S$ is of thin type.
 
\end{proof}
\section{Symbolic dynamics}\label{2}
The main result of this section is that on a space of special systems of isometries one can define an analogue of the symbolic dynamics on a space of IET (see \cite{BG}). This dynamics is described by a Markov shift that satisfies the Big Images and Preimages Property.

\subsection{The Rauzy induction}\label{RI}
In the theory of IETs the Rauzy induction is a Euclid type algorithm that transforms an original IET into another one operating on a smaller interval but equivalent from the point of view of topology of the corresponding measured foliation. Its iteration can be viewed as a generalized version of continued fraction expansion. This process can also be considered as a variation of the Rips machine algorithm for band complexes in the theory of
{$\mathbb R$}-trees (\cite{GLP}). 

We study modification of this algorithm for our purpose. The main idea is that from any system of isometries one constructs a sequence of systems of isometries equivalent  to the original one but with a smaller support. Combinatorial properties of this sequence are responsible for ``ergodic'' properties of the original system of isometries. The Rauzy induction for systems of isometries  was introduced by Dynnikov in \cite{D}.

\emph{The Rauzy induction} for a special system of isometries is a recursive application of admissible transmissions followed by reductions as described below.

\begin{definition}
Let $$S=([0,a+b+c];[0,a]\leftrightarrow [b+c,a+b+c];$$
$$ [0,b] \leftrightarrow [a+c,a+b+c]; [0,c]\leftrightarrow [a+b, a+b+c])$$
be a special system of isometries. So, two of the subintervals,
$\left[a+c,a+b+c\right]$ and $\left[a+b,a+b+c\right]$, say, are contained in the third one $\left[b+c,a+b+c\right]$, say. Let $S^{'}$ be the system of isometries obtained from $S$ by replacing the pair $\left[0,b\right]\leftrightarrow\left[a+c,a+b+c\right]$
by the pair $\left[0,b\right]\leftrightarrow\left[a-b,a\right]$ and the pair $\left[0,c\right]\leftrightarrow\left[a+b,a+b+c\right]$ by the pair $\left[0,c\right]\leftrightarrow\left[a-c,a\right]$

We say that $S^{'}$is obtained from $S$ by a \emph{transmission} (on the right).
\end {definition}

\begin{definition}
Let
$$
S=\left(\left[A,B\right];\left[a_{1},b_{1}\right]\leftrightarrow\left[c_{1},d_{1}\right];\left[a_{2},b_{2}\right]\leftrightarrow\left[c_{2},d_{2}\right];\left[a_{3},b_{3}\right]\leftrightarrow\left[c_{3},d_{3}\right]\right)
$$
be a system of isometries (not necessarily special) and let $d_{1}=B$. We call all
endpoints of our subintervals \emph{critical points}. Assume that
the point $B$ is not covered by any interval from $S$ except $[c_1,d_{1}]$
and that the interior of the interval $\left[c_{1},d_{1}\right]$ contains
a critical point. Let $u$ the rightmost such point. Then the interval $[u,B]$ is covered only by one interval from our system. Replacing the pair $\left[a_{1},b_{1}\right]\leftrightarrow\left[c_{1},d_{1}\right]$
with $\left[a_{1},b_{1}-d_{1}+u\right]\leftrightarrow$$\left[c_{1},u\right]$
in $S$ with simultaneous cutting off the part $[u,B]$ from the support interval will be called a\emph{ reduction on the right} (of the pair $\left[a_{1},b_{1}\right]\leftrightarrow\left[c_{1},d_{1}\right]$). 
\end{definition}
Note that the application of the Rauzy induction to a special system of isometries gives us a special system of isometries again (see Figure \ref{R}). The pair of subintervals that was reduced is called a \emph{winner} (like in the case of IET).

\begin{figure}[h]
\includegraphics[width=9cm,height=11cm]{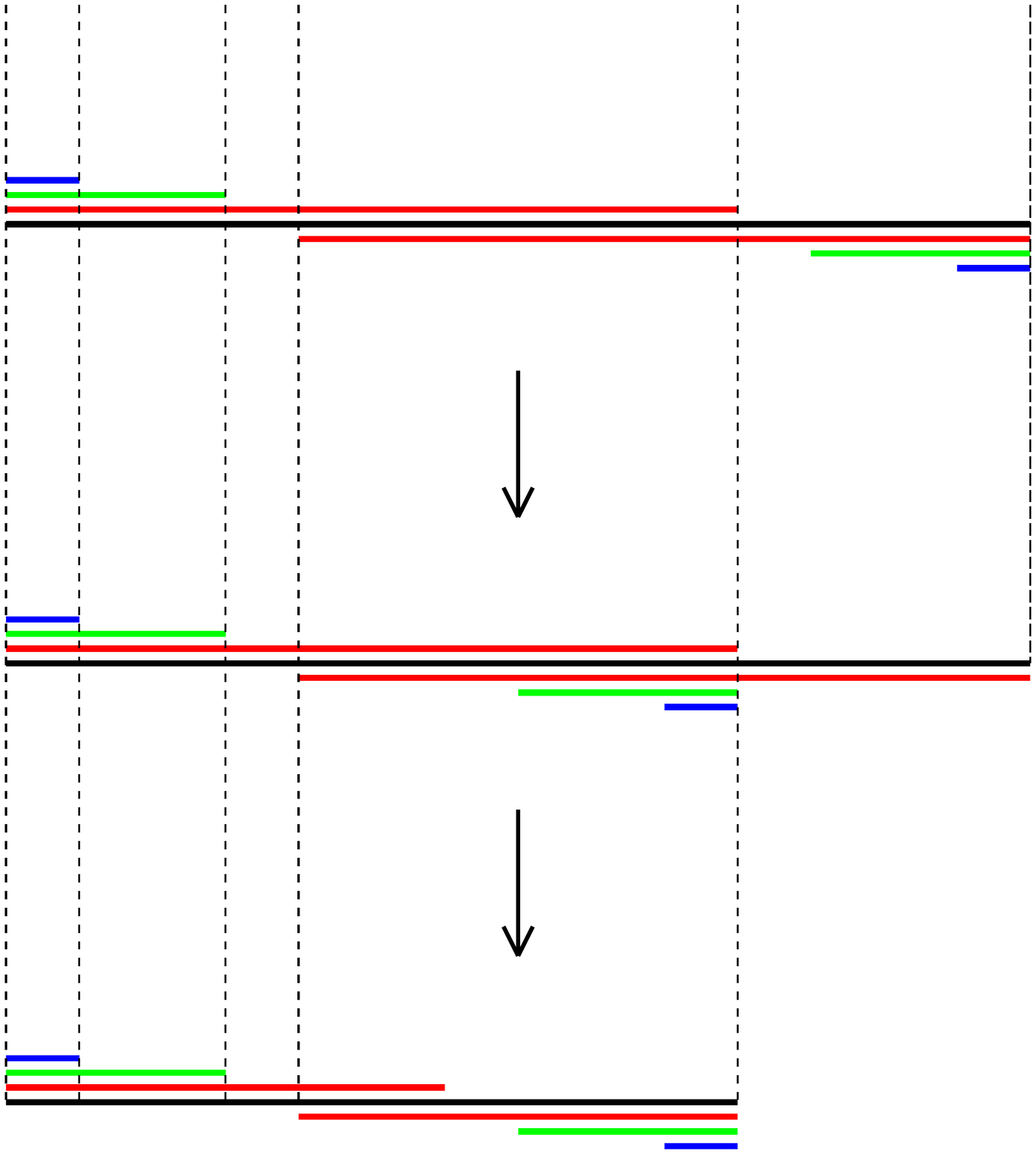}
\put(-214,265){$c$}
\put(-230,265){$0$}
\put(-185,265){$b$}
\put(-178,265){$b+c$}
\put(-83,265){$a$}
\put(-40,265){$a+b+c$}
\vspace{-1cm}
\caption{The Rauzy induction: transmission of $b$ and $c$ intervals and reduction of $a$-interval.}
\label{R}
\end{figure} 

We have the following obvious 
\begin{lemma}
The Rauzy induction does not influence the existence of the finite orbits or their property to be everywhere dense: the origin and the image are equivalent.
\end{lemma}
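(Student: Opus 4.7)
The plan is to verify the equivalence of $S$ and its Rauzy-induced image $S'$ by checking the two elementary operations — transmission and reduction — separately, with transitivity handling the composition. Once equivalence is established, both conclusions of the lemma follow: the common subinterval $[A_0, B_0]$ meets every orbit and the two orbit equivalence relations agree on it, so finite orbits of $S$ correspond to finite orbits of $S'$ via the homotopy equivalence of orbit graphs (which has finite vertex fibres), and density is preserved because the support of each system is reached from $[A_0, B_0]$ by a single application of an isometry.

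For the transmission, the support is unchanged and I take $[A_0, B_0] = [0, a+b+c]$. A direct computation shows the new isometries factor through $\phi_1$: writing $\phi_j(s) = s + \tau_j$ with $\tau_1 = b+c$, $\tau_2 = a+c$, $\tau_3 = a+b$, one has $\phi_2'(s) = s + (a-b) = \phi_1^{-1}(\phi_2(s))$ and $\phi_3' = \phi_1^{-1} \circ \phi_3$. Since $\phi_1$ remains a generator of $S'$, the orbit equivalence relation on $[0, a+b+c]$ is literally the same, and the map $\Gamma_x(S) \to \Gamma_x(S')$ that is the identity on vertices and on $\phi_1$-edges and sends each $\phi_2$-edge to the two-edge path $\phi_2' \cdot \phi_1$ (and analogously for $\phi_3$) is a homotopy equivalence by the standard change-of-generators argument for orbit graphs.

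For the reduction, the support shrinks from $[0, a+b+c]$ to $[0, a]$ and I take $[A_0, B_0] = [0, a]$. The admissibility hypothesis of the reduction — that $(a, a+b+c]$ is covered only by the image of $\phi_1$ — guarantees that every $y$ in the chopped-off interval has $\phi_1^{-1}(y) \in (a-b-c, a] \subset [0, a]$ in its $S$-orbit, verifying condition~(1) of the equivalence definition. For condition~(2), I define $\Gamma_x(S) \to \Gamma_x(S')$ to be the identity on vertices in $[0, a]$ and to send each ``phantom'' vertex $y \in (a, a+b+c]$ to $\phi_1^{-1}(y)$. The edges incident to $y$ are the $\phi_1$-edge to $\phi_1^{-1}(y)$ (contracted by the map) and possibly a $\phi_2$- or $\phi_3$-edge to some $x \in [0, a]$, which is rerouted — using $\phi_1^{-1}(\phi_2(x)) = \phi_2'(x)$ — to the corresponding edge of $\Gamma_x(S')$. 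Phantom vertices being pairwise non-adjacent (their $\phi_j$-neighbours all lie in $[0, a]$), the contracted subgraph is a forest, so the map is a homotopy equivalence with vertex fibres of size at most~$2$.

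The main care needed is in the reduction step, specifically in checking that the contraction is actually a homotopy equivalence. This reduces to the observation above that no two phantom vertices are adjacent — forcing the contracted subgraph to be a disjoint union of pendant edges — combined with the standard fact that contracting a forest inside a graph is a homotopy equivalence.
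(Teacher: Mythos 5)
The paper gives no argument at all for this lemma, presenting it as obvious, so there is no paper proof to compare against: your write-up supplies the missing details. The structure — check the paper's definition of equivalence of systems of isometries for transmission and reduction separately, then compose by transitivity — is the natural route, and your transmission computation ($\phi_2' = \phi_1^{-1}\circ\phi_2$, $\phi_3' = \phi_1^{-1}\circ\phi_3$, change-of-generators homotopy equivalence) is correct.

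One small imprecision in the reduction step is worth flagging, though it does not invalidate the argument. Since you perform the reduction on the \emph{already-transmitted} system, the phantom vertices $y\in(a,a+b+c]$ lie outside all domains $A_j\subset[0,a]$ and outside $B_2'=[a-b,a]$, $B_3'=[a-c,a]$; their only incident edge is the single incoming $\phi_1$-edge, so they are literal pendant vertices and the contraction is a homotopy equivalence with no rerouting required. Your talk of $\phi_2$- or $\phi_3$-edges at phantoms would only be relevant if one tried to pass from the \emph{un-transmitted} $S$ directly to $S'$ — but then the admissibility hypothesis you invoke (that $(a,a+b+c]$ is covered only by the image of $\phi_1$) fails, because $B_2=[a+c,a+b+c]$ and $B_3=[a+b,a+b+c]$ also meet that interval. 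So the rerouting is either unnecessary (correct framing) or the premise is wrong (incorrect framing). As it stands, the forest observation and the final conclusion are fine, and the proof is correct; just be clearer that the reduction is applied after transmission, whence phantoms have degree one.
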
 

We say that a system of isometries has a \emph{hole}
if there are some points in the support interval that are not covered by
any interval from $S$. This means in particular that our system has points
with finite orbits. Therefore, one can stop the Rauzy induction once it results in a system with a hole.

For instance, one can check directly that the hole appears after one step of the Rauzy induction if $a<b+c$.

One can check that a system of isometries of thin type is exactly such a system for which the Rauzy induction can be applied for infinite number of times, and we never get a hole (see \cite{S12} for details).
It is easy to compare the formulas for the Rauzy induction with the maps that appear in \cite{AS} in description of the Rauzy gasket as iterated function system. So, one can check that the set of lengths of subintervals $(l_1,l_2,l_3)$ with the renormalization condition $\sum_{i=1}^{3}l_i=1$ such that the corresponding special systems of isometries are of thin type, forms the Rauzy gasket.
 
Let $\EuScript{A} =\{ 1,2,3 \}$; let also $l=(l_1,l_2,l_3)$ be the vector of lengths of the subintervals and $(a,b,c)$ be a permutation of $l_i$ in a decreasing order: 
$$a=\max_{i=1,2,3}l_i, c=\min_{i=1,2,3}l_i$$ and $b$ will be equal to the medium value $l_i$. For sake of simplicity we can assume that for the original system the intervals were enumerated in decreasing order: $a=l_1, b=l_2, c=l_3$. 

Now let us check what happens with the system of isometries under the action of the Rauzy induction. 
Application of the Rauzy induction can change the order of the intervals; in order to keep track of it, as in case of IET, we add some combinatorial data. Namely, for each special system of isometries we associate not only the collection of three lengths ($l_1,l_2,l_3$) but also a permutation $\tau$ such that $\tau(l_1,l_2,l_3)$ is in decreasing order: for instance, $\tau=(1,2,3)$ if $l_1>l_2>l_3$; similarly, $\tau=(2,3,1)$ if $l_2>l_3>l_1$ etc.

Thus, the parameter space $\EuScript{V}= \mathbb R^{3}\times S_{3},$ with the normalizing restriction $a+b+c=1.$ 

Assuming that we started from $(1,2,3)$, one step of the Rauzy induction can be described as follows:
\begin{itemize}
\item if the order is preserved and so $\tau=(1,2,3)$: 
$$ \left( \begin{array}{c} a \\ b \\ c \end{array} \right) = \begin{pmatrix} 1 & 1 & 1 \\ 0 & 1 & 0  \\ 0 & 0 & 1 \end{pmatrix} \cdot \left(\begin{array}{c} a' \\ b'\\c' \end{array} \right).$$
We denote the matrix of the induction by 
$$R_{1}=\begin{pmatrix}
1 & 1 & 1\\
0 & 1 & 0 \\
0 & 0 & 1
\end{pmatrix};$$
 
\item if the order changes in the following way: $\tau=(2,1,3)$; $$ \left( \begin{array}{c} a \\ b \\ c \end{array} \right) =R_{2} \cdot \left(\begin{array}{c} a' \\ b'\\c' \end{array} \right),$$
where  $$R_{2}=\begin{pmatrix}
1 & 1 & 1\\
1 & 0 & 0 \\
0 & 0 & 1
\end{pmatrix};$$

\item if the new order of the intervals is given by  $\tau=(3,1,2)$, the induction is described as follows: $$ \left( \begin{array}{c} a \\ b \\ c \end{array} \right) =R_{3} \cdot \left(\begin{array}{c} a' \\ b'\\c' \end{array} \right),$$ where
 $$R_{3}=\begin{pmatrix}
1 & 1 & 1\\
1 & 0 & 0 \\
0 & 1 & 0
\end{pmatrix}.$$

\end{itemize}
\subsection{The acceleration}
\noindent One can construct an \emph{accelerated} version of the Rauzy induction. We define a \emph{generalized iteration} of the Rauzy induction by analogy with a step of the fast version of Euclid's algorithm, which involves the division with remainder instead of substraction of the smaller number from the larger. It may happen that only one of the three pairs of intervals is subject to reduction in several consecutive steps of the Rauzy induction (and the intervals from the second and the third pair are involved only in transmissions). Another words, it means that there is the same winner for several consecutive steps of the algorithm.
In this case we consider the result of such a sequence of the Rauzy induction iterations as the result of applying of one generalized iteration. This kind of acceleration for IET was described by Zorich in \cite{Z}. 

\noindent  The matrix of the one step of the accelerated Rauzy induction $R(n)$ is the following:
$$\begin{pmatrix}
n & 1 & n\\
1 & 0 & 0 \\
0 & 0 & 1
\end{pmatrix}$$
or 
$$\begin{pmatrix}
n & n & 1\\
0 & 0 & 1 \\
0 & 1 & 0
\end{pmatrix},$$
where $n$ is a number of simple Rauzy inductions included in one generalized iteration.
There is an evident 
\begin{lemma}
The matrix of a single step of the accelerated Rauzy induction is given by the following formulas:
$$R(n)=R_1^n\cdot R_2$$
or
$$R(n)=R_1^n\cdot R_3,$$
where $n$ is a number of simple Rauzy iterarions included in one generalized iteration. 
\end{lemma}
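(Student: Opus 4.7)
My plan is to deduce the formula from two ingredients: a combinatorial description of what a generalized iteration looks like as a concatenation of simple steps, and the associative composition of the simple induction matrices. The whole argument is a short bookkeeping followed by a one-line verification.

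First I would pin down the combinatorial picture. By definition, a generalized iteration is a maximal block of consecutive simple Rauzy iterations sharing a common winner. The winner at each simple step is the pair occupying position $1$ (the longest interval), so two consecutive simple steps share a winner precisely when the first of them leaves the sorted order invariant, i.e., is of type $R_1$. Consequently, a generalized iteration of length $n$ consists of a run of $R_1$-type iterations terminated by a single iteration that dethrones the current leader, and this terminating step is of type $R_2$ (new order $(2,1,3)$) or $R_3$ (new order $(3,1,2)$).

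Next I would compose the matrices. Each $R_i$ is defined by $(a,b,c)^{T}=R_i\,(a',b',c')^{T}$, where the primed quantities are the sorted lengths after the step. It follows that the matrix of a composition of simple steps is the left-to-right product of the individual matrices in the order performed. Applied to the block described above, the matrix of one generalized iteration therefore equals $R_1^{n-1}\cdot R_2$ or $R_1^{n-1}\cdot R_3$, which matches the form $R_1^{n}\cdot R_2$ or $R_1^{n}\cdot R_3$ appearing in the statement once one fixes the corresponding convention on the exponent $n$.

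Finally I would verify the explicit closed forms by a one-line induction giving
$$R_1^{k}=\begin{pmatrix}1 & k & k\\ 0 & 1 & 0\\ 0 & 0 & 1\end{pmatrix},$$
followed by a direct multiplication by $R_2$, respectively $R_3$, producing exactly the matrices displayed earlier for $R(n)$. I do not expect any genuine obstacle; the only thing to check carefully is the equivalence ``same winner across consecutive steps $\Leftrightarrow$ sorted order preserved'', which is built into the definition of the three case-matrices $R_1$, $R_2$, $R_3$.
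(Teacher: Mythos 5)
Your proof is correct, and since the paper labels this lemma ``evident'' and supplies no argument at all, your write-up in effect provides the reasoning the authors leave implicit. The decomposition you use---a generalized iteration is a maximal run of $R_1$-steps (order-preserving, hence winner-preserving) capped by a single $R_2$ or $R_3$ step (which changes the leader and thus the winner)---combined with the observation that the step matrices compose left-to-right in the order performed, is exactly what is needed.

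One remark worth making explicit: the off-by-one discrepancy you flag at the end is a genuine inconsistency in the paper's conventions, not an error in your argument. If a generalized iteration consists of $n$ simple steps, then (by your run-length argument) there are $n-1$ steps of type $R_1$ followed by one terminating step, giving $R_1^{n-1}R_2$ or $R_1^{n-1}R_3$; direct multiplication with $R_1^{k} = \bigl(\begin{smallmatrix}1 & k & k\\ 0 & 1 & 0\\ 0 & 0 & 1\end{smallmatrix}\bigr)$ then reproduces the earlier displayed matrix $\bigl(\begin{smallmatrix}n & 1 & n\\ 1 & 0 & 0\\ 0 & 0 & 1\end{smallmatrix}\bigr)$, which has an $n$ (not $n+1$) in the top-left entry. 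So the displayed $R(n)$ agrees with $R_1^{n-1}R_2$, not with the lemma's stated $R_1^{n}R_2$; the lemma's exponent convention must be read as ``$n$ = number of order-preserving steps,'' in which case the generalized block has $n+1$ simple steps total. Your diplomatic phrasing (``once one fixes the corresponding convention on the exponent'') is exactly the right way to handle this, but you could say outright that the paper is internally off by one between the displayed $R(n)$ and the lemma. Likewise, the paper's second displayed matrix $\bigl(\begin{smallmatrix}n & n & 1\\ 0 & 0 & 1\\ 0 & 1 & 0\end{smallmatrix}\bigr)$ has rows two and three misprinted: $R_1^{n-1}R_3 = \bigl(\begin{smallmatrix}n & n & 1\\ 1 & 0 & 0\\ 0 & 1 & 0\end{smallmatrix}\bigr)$. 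None of this affects the substance of your argument, which is sound.
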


\subsection{The Markov Map}
In the case of special systems of isometries $X$ - parameter space - is the triangle with vertices $(1:0:0)$, $(0:1:0)$, $(0:0:1)$. 
The Rauzy induction defines a partition of $X$ in the following way:
\begin{itemize}
\item on step zero $X$ is divided into four subsimplices: 
\begin{itemize}
\item $X^{0}_{1}$ with vertices $(1:1:0)$, $(1:0:0)$, $(1:0:1)$. This subsimplex can be described by the following inequality: 
$a>b+c$ and so includes two regions: $l_1>l_2+l_3, l_2>l_3>0$ and $l_1>l_2+l_3$, $l_3>l_2>0$; the first one corresponds to the coding $(1,2,3)$; the second one is coded by $(1,3,2)$;
\item $X^{0}_{2}$ with vertices $(0:1:1)$, $(0:1:0)$, $(1:1:0)$, it corresponds to the coding $(2,1,3)$ and $(2,3,1)$;
\item $X^{0}_{3}$ with vertices $(1:0:1)$, $(0:0:1)$, $(0:1:1)$, it corresponds to the coding $(3,1,2)$ and $(3,2,1)$;
\item $X^{0}_{0}$ with vertices $(1:0:1)$, $(0:1:1)$, $(1:1:0)$, it corresponds to the hole; 
\end{itemize}
\item the renormalized version of the induction map is given by the following formula 
$$T: X\to X, T(l)=\frac{Rl}{||Rl||},$$
where $R$ is the matrix of the induction and $l=(a,b,c)$ with $a+b+c=1$.
\item after one step of the Rauzy induction one of three subsimplices (depending where the point that we examine was located) will be also divided into four parts in the same way etc.
\end{itemize}
We enumerate steps of the (non-accelerated) induction by lower $n$ and the number of the part of each step by the upper index $i: X^{i}_{n}$ is the cell with the corresponding address.
\begin{lemma}
$T$ is a Markov map, and $(X^{i}_{n})$ is a Markov partition.
\end{lemma}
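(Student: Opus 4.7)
The plan is to verify directly that $T$ is piecewise projective on the partition and that each cell maps onto a cell at the previous level. From the formula $T(l)=Rl/\|Rl\|$ and the explicit matrices $R_1,R_2,R_3$ recorded in Subsection~\ref{RI}, the restriction of $T$ to each cell $X_0^k$ ($k=1,2,3$) is the projective action of a fixed invertible $3\times 3$ integer matrix, hence a real-analytic homeomorphism onto its image. The fourth level-zero piece $X_0^0$ is the ``hole'' and is excluded from the dynamics; the Rauzy gasket consists precisely of the points whose forward $T$-orbit avoids $X_0^0$ forever.

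The decisive geometric step is the identity $T(X_0^k)=X$ for $k=1,2,3$. This is a direct computation from the vertex descriptions given above: the three vertices $(1{:}0{:}0)$, $(1{:}1{:}0)$, $(1{:}0{:}1)$ of $X_0^1$ are, up to projective equivalence, the columns of the matrix $R_1$, so the projective map attached to $X_0^1$ carries $X_0^1$ onto the full simplex $X$. The cases $k=2,3$ follow by the analogous comparison with $R_2$ and $R_3$, and the two sub-orderings inside each $X_0^k$ (for instance, $(1,2,3)$ and $(1,3,2)$ inside $X_0^1$) correspond merely to a harmless combinatorial relabelling of the same projective cell. Higher-level cells are then defined inductively: for an itinerary $\alpha=(k_0,k_1,\ldots,k_n)\in\{1,2,3\}^{n+1}$ one sets
$$X_n^\alpha \;=\; X_0^{k_0}\cap T^{-1}(X_0^{k_1})\cap\cdots\cap T^{-n}(X_0^{k_n}),$$
and from $T(X_0^{k_0})=X$ it follows at once that $T(X_n^\alpha)=X_{n-1}^{(k_1,\ldots,k_n)}$. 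This is precisely the Markov property, with $T$ restricting to a projective homeomorphism between matched cells.

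The main obstacle is really only the identity $T(X_0^k)=X$, which requires correctly interpreting the direction of the formula $T(l)=Rl/\|Rl\|$ (whether $R$ stands for the Rauzy matrix or its inverse) and then matching the columns of the matrix with the listed vertices of the sub-simplex. Once this is carried out for $k=1$, the other two cases follow by symmetry, and the Markov property is then a formal consequence of the recursive definition of the cells. One further minor remark ensures that the partition separates points on the Rauzy gasket: strictly positive integer matrices contract the Hilbert projective metric on any subsimplex they send into themselves, so the diameters of the cells $X_n^\alpha$ along any infinite itinerary tend to zero.
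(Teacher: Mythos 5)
The paper states this lemma without proof; it is treated as an immediate consequence of the explicit description of $T$ and the partition. Your argument supplies the proof along exactly the lines the authors presumably had in mind: $T$ is piecewise projective, each level-zero cell is carried by a projective homeomorphism onto the full simplex $X$, the higher-level cells are the pullbacks along itineraries, and this is the Markov property; separation of points then follows from Hilbert-metric contraction of positive matrices.

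One concrete inaccuracy should be flagged. The identification of the vertices of $X_0^k$ with the columns of $R_k$ works for $k=1$ but fails for $k=2,3$. The matrices $R_2$ and $R_3$ in Subsection~\ref{RI} act on the \emph{decreasing-reordered} vector $(a,b,c)$ and therefore absorb a coordinate permutation. In fact $R_2=R_1P_{12}$ and $R_3=R_1P$ for suitable permutation matrices, so the columns of $R_2$ and $R_3$ are merely reorderings of $(1,0,0),(1,1,0),(1,0,1)$ and span $X_0^1$, not $X_0^2$ or $X_0^3$ (whose listed vertices are $(0{:}1{:}1),(0{:}1{:}0),(1{:}1{:}0)$ and $(1{:}0{:}1),(0{:}0{:}1),(0{:}1{:}1)$ respectively). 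The branch of $T$ on $X_0^k$ in the $(l_1,l_2,l_3)$ coordinates is the projective action of the elementary subtraction matrix $M_k$ (subtract $l_i+l_j$ from the largest $l_k$), and it is $M_k^{-1}$, not $R_k$, whose columns are the vertices of $X_0^k$; $M_k^{-1}$ differs from $R_k$ by a permutation. You correctly anticipate a possible mismatch (``requires correctly interpreting the direction of the formula''), but the phrase ``The cases $k=2,3$ follow by the analogous comparison with $R_2$ and $R_3$'' is not literally true and should be replaced by the comparison with $M_2^{-1}$, $M_3^{-1}$. Once that adjustment is made, the key identity $T(X_0^k)=X$ holds for all $k$, and the rest of the proof (inductive definition of $X_n^\alpha$, the Markov property $T(X_n^\alpha)=X_{n-1}^{(k_1,\ldots,k_n)}$, and shrinking of cells) is correct.
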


The Markov partition is shown on Figure \ref{RG}; the Rauzy gasket (black part) is a fractal subset of $X$ determined by the systems of isometries of thin type; the white part corresponds to the systems of isometries such that the hole was obtained after some steps of the Rauzy induction. 

\begin{figure}[h]
\centering
\vspace{-20cm}
\hspace{-5cm}
\includegraphics[scale=1.1]{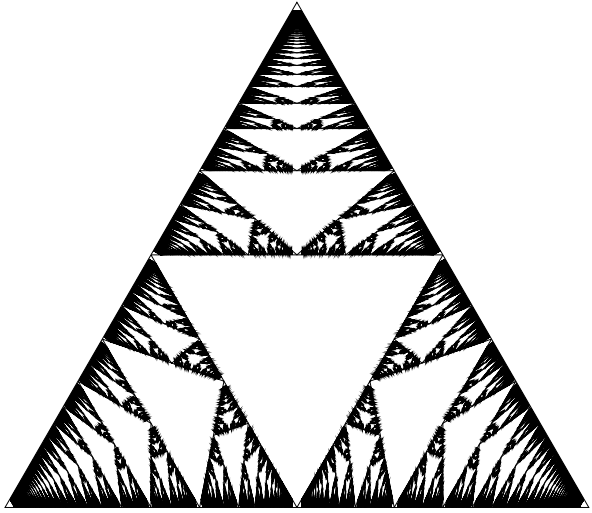}
\vspace{-4cm}
\caption{The Rauzy gasket}
\label{RG}
\end{figure}

\subsection {The Rauzy graph}
As it was mentioned above, each special system of isometries is described by the vector of lengths $(a,b,c)$ and the permutation $\tau$.
So, like in case of IET, we use \emph{the Rauzy graph} to describe the combinatorics of the accelerated Rauzy induction. 
 
Then, vertices of the Rauzy graph are all permutations of 3 elements, and 2 vertices are connected by an arrow if and only if there exists a realization of it by the Rauzy induction. For example, looking at one step of the Rauzy induction, we see that there is $(1,2,3)\rightarrow (2,1,3)$ but there is no arrow between $(1,2,3)$ and $(3,2,1).$ However, it could be possible that the induction stops after a while because we obtained a hole. In the current paper we work only with the systems of isometries of thin type, and the hole vertex can be excluded from the graph (we call this exclusion an ``adjustment''). 

So it is enough to consider the graph on $6$ vertices defined by the described permutations.

Acceleration means that the combinatorics changes after each step (the graph does not contain loops). 
The adjusted Rauzy graph for the accelerated Rauzy induction is shown on Figure \ref{G}.

\begin{figure}[h]
\includegraphics[width=12cm,height=9cm]{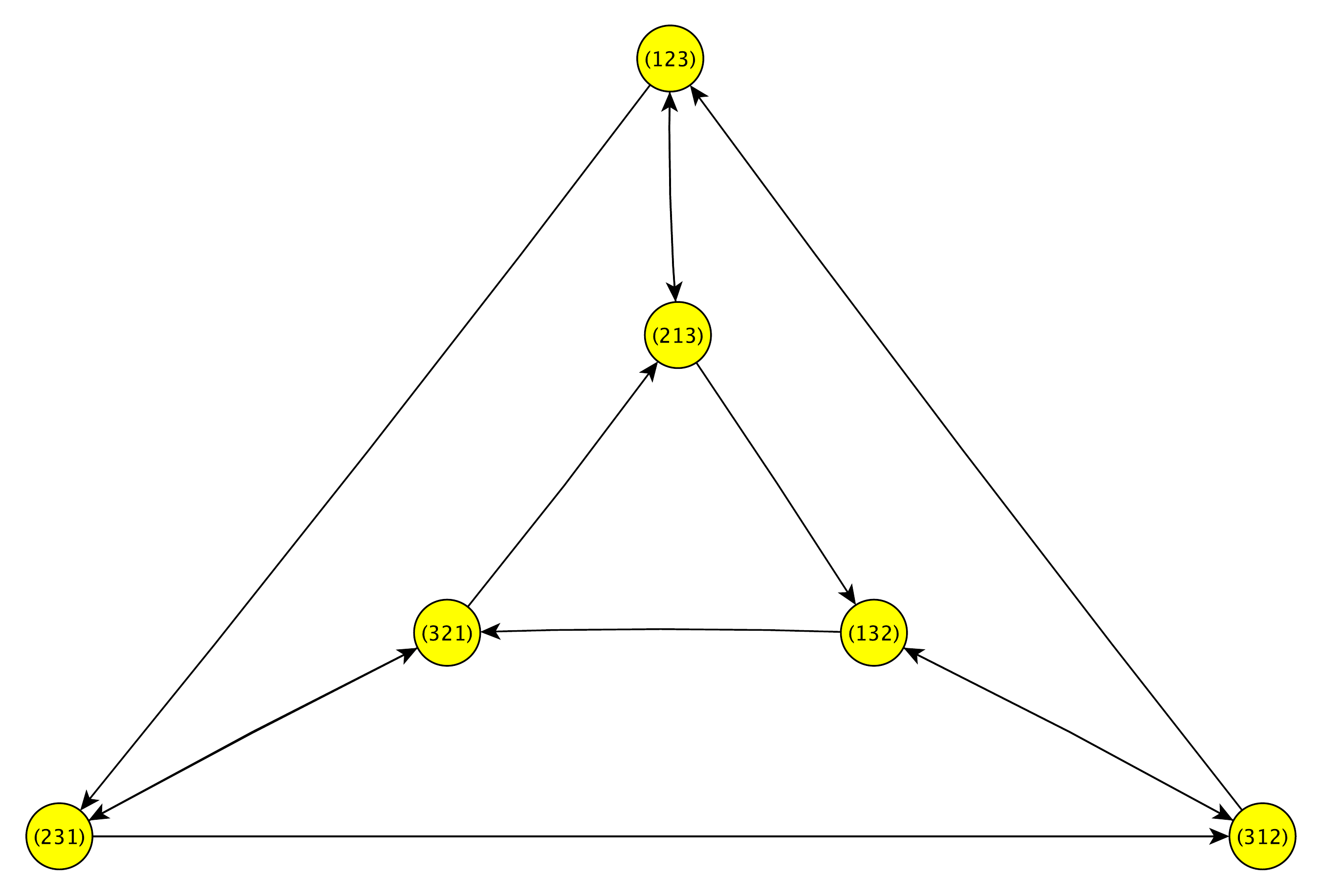}
\vspace{-1cm}
\put(-145,120){$1$}
\put(-90,120){$1$}
\put(-170,200){$1$}
\put(-260,120){$2$}
\put(-203,120){$2$}
\put(-180,200){$2$}
\put(-175,78){$3$}
\put(-175,18){$3$}
\put(-276,40){$3$}
\put(-276,53){$2$}
\put(-76,40){$3$}
\put(-76,53){$3$}
\caption{The adjusted accelerated Rauzy Graph}
\label{G}
\end{figure} 

We have the following obvious 
\begin{lemma}
The [adjusted] Rauzy graph is the Cayley graph for the permutation group $S_3$.
\end{lemma}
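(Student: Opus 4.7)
The plan is to realize the two admissible moves of the accelerated Rauzy induction as right-multiplication by two fixed elements of $S_3$, and then quote the fact that these elements generate $S_3$. Recall that in the accelerated algorithm every edge carries a factor $R_1^n$ (which preserves the combinatorial data $\tau$) followed by either $R_2$ or $R_3$, so it is enough to understand how $R_2$ and $R_3$ act on permutations. The matrix $R_2$ is the one that exchanges the winner with the runner-up among the three lengths, so if the current state is $\tau\in S_3$ it produces the state $\tau\circ s$ where $s$ is the transposition $(1\,2)$ acting on the ordering slots. The matrix $R_3$ promotes the smallest interval to the new maximum while pushing the previous maximum and the previous middle one position down; this is a cyclic rotation of the three slots, hence right-multiplication by a fixed $3$-cycle $r\in S_3$.

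The first step I would carry out is a direct computation at a single base vertex, say $\tau=\mathrm{id}=(1,2,3)$: from the formulas in the previous subsections, $R_2$ sends it to $(2,1,3)$ and $R_3$ sends it to $(3,1,2)$, which identifies $s=(1\,2)$ and $r=(1\,2\,3)$ (up to the orientation convention). The analogous computation at every other vertex gives the same right-multiplication rule, because the Rauzy step is defined purely in terms of the current ordering and therefore is equivariant under relabelling of the intervals. This produces a bijection between the directed edges of the Rauzy graph and the edges $\{(\tau,\tau s):\tau\in S_3\}\cup\{(\tau,\tau r):\tau\in S_3\}$ of the Cayley graph $\mathrm{Cay}(S_3,\{s,r\})$.

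To conclude I would invoke the elementary fact that a transposition together with a $3$-cycle generates $S_3$, so the graph just described is indeed a (connected) Cayley graph of $S_3$. The only remaining check is that from every vertex both $R_2$- and $R_3$-moves are actually available in the thin-type regime, which is immediate from the accelerated algorithm since the hole vertex has been removed in the adjustment step and only the ``stay-forever with the same winner'' branch is forbidden.

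The main, rather minor, obstacle is bookkeeping: one has to be careful about left versus right multiplication, and about matching the edge-labels $1,2,3$ in Figure~\ref{G} with the generators $s$ and $r$ (and their inverses). Once the base-vertex computation is done correctly, the uniform equivariance of the Rauzy step under relabelling removes any further case analysis.
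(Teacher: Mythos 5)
Your argument is correct and is exactly the computation the paper has in mind when it dismisses this lemma as ``obvious'': the two admissible branches of the accelerated induction act on the combinatorial datum $\tau$ by right-multiplication by two fixed slot-permutations (one a transposition, one a $3$-cycle), the equivariance of the Rauzy step under relabelling makes the rule vertex-independent, and $\langle (1\,2),\,3\text{-cycle}\rangle = S_3$ gives connectedness. One tiny bookkeeping slip worth noting: from $\tau=\mathrm{id}$ the $R_3$-move lands on the one-line word $(3,1,2)$, which is the cycle $(1\,3\,2)$ rather than $(1\,2\,3)$; you already flag this as an orientation convention and it does not affect anything, since either $3$-cycle together with a transposition generates $S_3$.
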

It implies in particular that the Rauzy graph is connected.

For future constructions we will also need one additional definition: 
\begin{definition}
A path $\gamma$ in the Rauzy graph is called \emph{complete} if every $\alpha \in \EuScript A$ is a winner of some arrow in $\gamma$.
\end{definition}
\noindent The winners are labeled on edges of the Rauzy graph (see Figure \ref{G}).

\subsection {The Markov shift}
One can also consider the action of the non-accelerated Rauzy induction on the accelerated adjusted Rauzy graph. Then, each vertex of the adjusted Rauzy graph will be decomposed into countable number of vertices, and the same happens to the corresponding Markov cell. Each small Markov cell is coded by a permutation (that comes from the coding of vertices of the accelerated Rauzy graph) and a natural number $n$ of steps of the ordinary induction in the corresponding step of the accelerated one.  Then the Rauzy induction provides the Markov shift $\sigma$ in this coding on a countable alphabet. One can associate in a natural way a graph $\Gamma$ with such a Markov shift. $\Gamma$ can be obtained from the Rauzy graph by dividing every vertex into a countable number of vertices and adding required arrows between these new vertices. 

\begin{definition}
A countable Markov shift $\sigma$ with transition matrix $U$ and set of states $\EuScript{S}$ satisfies \emph{the big images and pre-images property}(BIP) if there exist $\{i_{1},\cdot,i_{m}\} \in \EuScript{S}$ such that for all $j \in \EuScript{S}$ there are $1\leq k,l \leq m$ for which $u_{i_{k},j}u_{j,i_{l}}=1.$
\end{definition} 

\begin{definition}
A Markov shift is \emph{topologically mixing } if for any $i,j\in \EuScript{S}$ there exists a number $N=N(i,j)$ such that for any $n\geq N$ there is an admissible path of length $n$ on the graph of the shift that connects $i$ and $j$.
\end{definition}
\begin{lemma}
The Rauzy induction defines a countable topologically mixing Markov shift that satisfies BIP property.
\end{lemma}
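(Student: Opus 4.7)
The plan is to unpack the coding and then handle the two properties separately, both reducing to combinatorial facts about the finite accelerated Rauzy graph on $S_{3}$ recalled in the previous subsections.

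First, we fix the countable alphabet. A state is a pair $(\tau, n) \in S_{3} \times \mathbb{N}$, where $\tau$ is the starting permutation of an accelerated step and $n \geq 1$ is the exponent in $R_{1}^{n}$, i.e.\ the number of basic Rauzy iterations bundled into the accelerated one. The crucial observation is that the endpoint permutation of an accelerated step is determined by $\tau$ together with the choice between $R_{2}$ and $R_{3}$, and is independent of $n$. Consequently, a transition $(\tau, n) \to (\tau', n')$ is admissible if and only if the finite accelerated Rauzy graph on six vertices contains an edge $\tau \to \tau'$, independently of $n$ and $n'$. This observation turns both desired properties into statements about a finite graph, with a trivial lift to the countable alphabet in each case.

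For BIP we take $F = \{(\tau, 1) : \tau \in S_{3}\}$, a set of six states. Each vertex of the accelerated Rauzy graph has out-degree $2$ (the two choices $R_{2}$ and $R_{3}$) and, since the graph is the Cayley graph of $S_{3}$, in-degree $2$ as well. Hence every state $(\tau', n')$ admits some $\tau \in S_{3}$ with an edge $\tau \to \tau'$ and some $\tau'' \in S_{3}$ with an edge $\tau' \to \tau''$, and the corresponding states $(\tau, 1)$ and $(\tau'', 1)$ in $F$ provide the required $i_{k}$ and $i_{l}$.

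Topological mixing reduces to aperiodicity of the finite accelerated Rauzy graph, since strong connectivity is immediate from the Cayley structure and each intermediate vertex of a path can be lifted to a state of the countable shift with any multiplicity $n \geq 1$. For aperiodicity it suffices to exhibit loops of coprime lengths at some common vertex. Inspecting the action of the accelerated matrices on the ordering of $(l_{1}, l_{2}, l_{3})$, one sees that $R_{2}$ realises a transposition in $S_{3}$ while $R_{3}$ realises a $3$-cycle; thus applying $R_{2}$ twice and applying $R_{3}$ three times both produce closed loops at any chosen vertex, of lengths $2$ and $3$ respectively. Since $\gcd(2, 3) = 1$ the graph is aperiodic, and combined with strong connectivity this yields admissible paths of every sufficiently large length between any two vertices; the lift to the countable shift is immediate.

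The main obstacle is the last step: one has to identify correctly the cycle types of $R_{2}$ and $R_{3}$ viewed as permutations of $S_{3}$, since if both of them were involutions the graph would have period $2$ and topological mixing would fail. This identification is a short case check from the matrix formulas and the rule for how each induction step reorders $(l_{1}, l_{2}, l_{3})$, and is the only part of the argument that is not essentially formal.
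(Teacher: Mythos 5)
Your proof is correct, and the BIP part is essentially the paper's own argument: the authors also take $m=6$ with one state drawn from each vertex of the finite accelerated Rauzy graph. For topological mixing you are more careful than the paper, whose proof only asserts that mixing ``follows from the fact that both of the graphs of the induction are connected.'' Strong connectivity of the six-vertex accelerated graph only gives topological transitivity; the paper's stated definition of topological mixing also requires aperiodicity, and your observation that the $R_{2}$-type move acts on $S_{3}$ as a transposition while the $R_{3}$-type move acts as a $3$-cycle --- yielding loops of coprime lengths $2$ and $3$ at every vertex, hence period $1$ --- supplies exactly the missing step. The cycle types can be read off the paper's combinatorics (from $\tau=(1,2,3)$ the two outgoing arrows land at $(2,1,3)$ and $(3,1,2)$), and the Cayley-graph lemma ensures the same two generators act at every vertex, so the loops exist everywhere. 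In short, your argument is correct, follows the same basic decomposition as the paper, and fills in an aperiodicity step that the paper's one-line proof glosses over.
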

\begin{proof}
The first part follows from the fact that both of the graphs of the induction are connected. 
In order to obtain BIP property we have to choose $m=6$ and $i_{j}$ each belong to a different vertex of the accelerated Rauzy graph. 
\end{proof}

In a standard way for a Markov shift we associate with any finite path $\gamma$ in $\Gamma$  passing through the vertices $j_{1},\cdots, j_{k}$ a \emph{word} $w=(j_{1}\cdots j_{k}).$ 

Let us denote the space of infinite paths in the graph $\Gamma$ by $W$.
Then, to every word $w$ we assign the cylinder of depth $k:$ $C(w) = \{x\in W: (x_{1},\cdots,x_{k})=w\}.$ 

\subsection{The standard cocycle}
One can apply the Rauzy induction not only to a system of isometry but also to a corresponding suspension complex. 
A suspension complex for a special system of isometry contains three bands, each of which has a width (horizontal length) and a length (more precisely, vertical length). The matrix $R$ described above tells us how the widths of bands are cut by the Rauzy induction. 
At the same time, the vertical lengths of the bands increase during the same procedure (see \cite{BF} for the description of the Rips machine application to the band complex). Indeed, once we make a transmission, the lengths of all the bands that are not involved in the operation as well as the length of the winner do not change; however, the length of the loser increases exactly by the length of the winner. The reduction does not influence the vertical lengths of bands. 

So, informally, the \emph{cocycle} is responsible for things that happen with vertical lengths of bands during the application of the Rauzy induction. 

More precisely, let $B$ be a matrix of the cocycle. For $n$ steps of the non-accelerated induction that do not form yet the step of an accelerated one (and therefore the combinatorics does not change) we denote by $B(n)$ is the following matrix:
$$\begin{pmatrix}
1 & 0 & 0\\
n & 1 & 0 \\
n & 0 & 1
\end{pmatrix}$$

Thus the matrix $B$ of the cocycle is a product $B(n)$ with different $n$ and the required permutation matrices. 
Let us denote by $B_{\gamma}$ the cocycle matrix that corresponds to the path $\gamma$ in $\Gamma$.

\section{The roof function and the suspension flow}\label{3}
In this section we define the roof function associated with the cocycle and then use it to construct the suspension flow. We also prove some important estimations for the roof function that will be used later.
\subsection{The roof function}
\begin{definition}
A path $\gamma$ in the Rauzy graph is called \emph{positive}, if $B_{\gamma}$ has only strictly positive entries.
\end{definition}
\begin{lemma}
Any complete path is positive.
\end{lemma}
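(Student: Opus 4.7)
My plan is to track, for each band, the set of initial-length indices that contribute positively to its current vertical length, and to show that completeness of $\gamma$ forces every such support set to become $\{1,2,3\}$.

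First I would unpack the elementary dynamics: at a single non-accelerated step, the winner's vertical length is unchanged while each of the two losing bands' lengths is incremented by the current length of the winner (this is exactly the shape of $B(n)$, and iterates through the accelerated steps and permutation factors in the obvious way). For each physical band $P$ and each time $t$ along $\gamma$, let $\mathrm{Supp}_t(P) \subseteq \{1,2,3\}$ be the set of indices $j$ for which the initial length $L_0(j)$ has a strictly positive coefficient in $L_t(P)$. The elementary step then gives the monotone update rule $\mathrm{Supp}_{t+1}(P) = \mathrm{Supp}_t(P) \cup \mathrm{Supp}_t(W)$ whenever $P$ is a loser and $W$ is the winner at step $t+1$, while the winner's support is preserved. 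In particular supports never shrink, and $P \in \mathrm{Supp}_t(P)$ for all $t$ because $\mathrm{Supp}_0(P) = \{P\}$.

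Second, I would invoke completeness. Given any two physical bands $P \neq Q$, completeness provides a step at which $P$ is the winner; since there are only three bands, $Q$ is forced to be one of the two losers at that step, which gives $P \in \mathrm{Supp}(P) \subseteq \mathrm{Supp}(Q)$ from then on. Together with the tautology $Q \in \mathrm{Supp}(Q)$, this shows $\mathrm{Supp}_T(Q) = \{1,2,3\}$ for every $Q$ at the end of $\gamma$, which is exactly the statement that $B_\gamma$ has all strictly positive entries.

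The one point requiring care is matching this physical-band bookkeeping to the matrix $B_\gamma$ as written in the paper's position-based coordinates, where permutation factors appear in the factorization. But permutation matrices act by row/column permutations and preserve the property ``all entries positive,'' so no extra work is needed once the support-set conclusion is established. I expect no real obstacle: the statement is essentially a combinatorial consequence of the shape of $B(n)$ together with the pigeonhole observation that in a three-band system every non-winner is a loser.
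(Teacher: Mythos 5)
Your proposal is correct and essentially the same as the paper's proof: the paper also starts from the identity matrix, observes that when $\alpha$ wins its row is added to the other two rows, and concludes that completeness forces every zero to disappear. Your support-set bookkeeping simply makes the ``zeros only fill in, never reappear'' step and the three-band pigeonhole argument explicit, and the permutation remark you add at the end is the same ``we ignore permutations'' convention the paper invokes in passing.
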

\begin{proof}
We start from identity matrix of the cocycle.
The fact that $\alpha$ is the winner in terms of cocycle matrix means that row with number $\alpha$ is added to the two other rows (here we always mention the original enumeration and do not care about permutations). 
If a path is complete, than each row was added to two others at least once, so all zeros disappear.
\end{proof}
The word in our coding that corresponds to a positive path is called \emph{positive}.

Let us consider a complete path $\gamma_{*}$ and the sub simplex of the parameter space that corresponds to this path $\Delta_{\gamma_{*}}$.
\begin{definition}
The roof function $r$ is the first return time to the subsimplex $\Delta_{\gamma_{*}}$:
\begin{equation}\label{roof}
r(\lambda, \tau) = -\log||(B^{T}_{\gamma_{*}})^{-1} \lambda||,
\end{equation}
where $\lambda = (a,b,c)$ is a vector of lengths and $\tau$ is a corresponding permutation.
\end{definition}

\subsection{Properties of the roof function}
Now we prove some important properties of the roof function $r$ following the ideas from \cite{AGY} and \cite{BG}.

Let $\phi$ be a function: $X\rightarrow \mathbb R$, $X$ is a space of the Markov shift. We denote by $$var_{n}(\phi)=sup\{|\phi(x)-\phi(y)|: x_{i}=y_{i}, i=1,\cdots,n\}$$ \emph{n-th variation} of $\phi$.
\begin{definition}
We say that $\phi$ has \emph{summable variations} if $$\sum_{n=2}^{\infty}var_{n}(\phi)<\infty.$$
\end{definition}
\begin{definition}
The function $\phi$ is locally H\"older continuous if there exists $C_{0}>0$ and $0<\theta<1$ such that for all $n\geq1$ $var_{n}(\phi)\leq C_{0}\theta^{n}.$
\end{definition}
\begin{lemma}
The roof function is bounded away from zero and locally H\"older continuous. In particular, it has summable variations.
\end{lemma}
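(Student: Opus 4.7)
I would split the lemma into its two assertions and handle them with the combinatorial and positivity inputs already accumulated in the preceding sections. For the lower bound $r \geq \log 3$, I would exploit the completeness of the return path. Indeed, the first return to $\Delta_{\gamma_{*}}$ happens along a path $\gamma$ that must itself be complete (since $\gamma_{*}$ is complete and the return cycles back to the same vertex of the Rauzy graph), so by the lemma just proved, $B_{\gamma}$ has all entries in the positive integers, and in particular each row sum is at least $3$. For $\lambda \in \Delta_{\gamma_{*}}$ normalised so that $\|\lambda\|_{1} = 1$, the vector $\mu := (B_{\gamma}^{T})^{-1}\lambda$ is, by construction of the cocycle on vertical lengths, the length vector of the renormalised system and therefore has non-negative components; unpacking the identity $\lambda = B_{\gamma}^{T}\mu$ as a weighted sum with weights equal to the row sums of $B_{\gamma}$ yields $1 \geq 3\|\mu\|_{1}$, whence $r(\lambda,\tau) \geq \log 3 > 0$.

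For the H\"older estimate I would follow the approach of \cite{AGY} and \cite{BG}. Suppose $x,y$ are two points of the shift space $W$ agreeing on their first $n$ symbols. In particular they share the same first symbol, so the first-return path $\gamma$ and hence the matrix $B_{\gamma}$ appearing in the formula for $r$ coincide for $x$ and $y$. Their projections to the parameter space $\lambda(x), \lambda(y)$ both lie in the common cylinder $C(x_{1},\dots,x_{n})$, whose geometric realisation is the image of $\Delta_{\gamma_{*}}$ under the projective action of the $n$ successive return matrices. Because each of these matrices is strictly positive (as its underlying return path is complete), an application of the Birkhoff--Hopf contraction, grouped so as to accommodate the countable alphabet, gives that the Euclidean diameter of this cylinder is at most $C\theta^{n}$ for some uniform $C > 0$ and $0 < \theta < 1$.

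Combining the two inputs, I would write
\[
|r(x)-r(y)| \;=\; \left|\log\frac{\|(B_{\gamma}^{T})^{-1}\lambda(x)\|}{\|(B_{\gamma}^{T})^{-1}\lambda(y)\|}\right|,
\]
bound the difference of the numerators by $\|(B_{\gamma}^{T})^{-1}\|\cdot\|\lambda(x)-\lambda(y)\|$ (with the operator norm controlled by another row-sum estimate), bound the denominator from below by the constant $1/3$ coming from the first part, and apply $|\log s-\log t| \leq |s-t|/\min(s,t)$. This gives $|r(x)-r(y)| \leq C'\theta^{n}$, which is local H\"older continuity, and the summability of variations is then nothing but the convergence of a geometric series.

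The principal obstacle I see is obtaining the uniform geometric contraction despite the \emph{infinite} alphabet: individual symbols with very large label (coming from $B(n)$ with $n \gg 1$) contract the Hilbert metric only weakly, so there is no universal per-symbol Birkhoff rate. The remedy I would adopt is the standard one in the thermodynamic-formalism treatment of countable Markov shifts: extract a definite contraction after every block of bounded combinatorial length that visits each of the six vertices of the Rauzy graph. The BIP property and the connectedness of the Rauzy graph established in Section \ref{2} make such grouping possible, and the strict positivity of the cocycle matrix along a complete block then supplies the uniform $\theta < 1$.
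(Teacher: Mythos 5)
Your proposal follows the same route as the paper for both halves of the lemma. For the lower bound, the paper also uses that the return matrix has strictly positive integer entries, hence row sums at least $3$, to conclude $r \ge \log 3$; your reformulation via $\|\lambda\|_1 = \|B_\gamma^T \mu\|_1 \ge 3\|\mu\|_1$ is a cleaner way to phrase the same computation than the paper's rather compressed chain of (in)equalities. For the H\"older estimate, the paper likewise uses projective (Hilbert/Birkhoff) contraction, and in fact offloads the key uniform contraction to a cited result from \cite{AHS}; your proposal is more explicit about where the difficulty lies.

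The one place where your write-up needs more justification is the final sentence of the remedy: \emph{``the strict positivity of the cocycle matrix along a complete block then supplies the uniform $\theta < 1$.''} Positivity of a block matrix $M$ guarantees \emph{some} contraction of the Hilbert metric, with Birkhoff coefficient $\tanh(\Delta(M)/4) < 1$ where $\Delta(M)$ is the projective diameter of the image cone; but this coefficient is not uniform over blocks unless $\Delta(M)$ is uniformly bounded, and a priori $\Delta(M)$ can grow as the integer labels $n_i$ inside the block grow. So positivity alone does not hand you a uniform $\theta$; you still need to show that the relevant block diameters (or those of suitable longer reshuffled blocks) are uniformly controlled, which is exactly the content of the uniform expansion/contraction estimate from \cite{AHS} that the paper invokes at this step without reproducing its proof. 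With that estimate supplied, your argument closes and is the same as the paper's.
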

\begin{proof}
First, we prove the the roof function is bounded away from zero:
$$r(\lambda,\tau)=-\log||B^{T}_{\gamma}\lambda|| =\log||R\gamma\lambda^{'}||=\log(\sum_{i=1}^{3}\alpha_{ij}\lambda^{'}_{j})\geq \log(3\cdot \sum_{j=1}^{3}\lambda^{'}_{j})=\log3,$$ because $\gamma$ is a positive path and so all $\alpha_{ij}>0.$

The statement about the H\"older property follows directly from the uniformly expanding/contraction property of the induction map that was proved in our previous paper (\cite {AHS}). 

First, the roof function is locally constant on Markov cells and can be considered as a function on the space of Markov shift. Then, for each two points $x$ and $y$ from the same cylinder of depth $n$, we can find vectors $\lambda_{x}$ and $\lambda_{y}$ such that their symbolic dynamics is described by $x$ and $y$ respectively, and $$r(x)=r(\lambda_{x},\tau_{x}),$$ $$r(y)=r(\lambda_{y},\tau_{y}).$$
Now, each of these points have a preimage ($\lambda_{x}^{'}$ and $\lambda_{y}^{'}$ respectively) with respect to the induction such that 
$$\lambda_{x}=\frac{R\lambda^{'}_{x}}{||R\lambda^{'}_{x}||}$$ and $$\lambda_{y}=\frac{R\lambda^{'}_{y}}{||R\lambda^{'}_{y}||}$$ with the same matrix $R$ because the symbolic dynamics for $x$ and $y$ coincides up to step $n$.
Here $R$ is the matrix of the accelerated induction.

Now we have to use the fact that the projectivization of the map defined by the matrix with non-negative entries is always non-expanding with respect to the Hilbert metrics and when all the coefficients are positive it is strongly contracting (see \cite{AF} or \cite{B} for details). Therefore, for some $\theta<1$ it holds that
$$|r(x)-r(y)|= \frac{\log(R_{x}\lambda_{x})}{\log(R_{y}\lambda_{y})}\leq C_{2}d_{H}(\lambda_{x},\lambda_{y})\leq C_{2}C_{1}\theta^{n},$$
where $C_{1}$ and $C_{2}$ are uniform constants  and for the last estimation we use the fact that the sum of the entries in the each row of matrices we work with is positive.
\end{proof}

\subsection{The suspension flow}
We use a standard definition of the suspension flow constructed by the shift transformation $\sigma$ and the roof function $r$ (it is an analogue of the construction from \cite{V} for the Teichm\"uller flow). The suspension flow renormalizes the length of the interval to $1$. 

Formally, the definition is as follows.
The flow $\Phi(\sigma, r)$ is defined on a space $Y=((\lambda, \tau,t)\in \sigma \times \mathbb R: 0 \leq t \leq r(\lambda, \tau))$ 
and the points $(\lambda, \tau, r(\lambda, \tau))$ and $(\sigma(\lambda, \tau),0)$ are identified. It acts in the following way: 
$$\Phi_t(\lambda,\tau,s)=(\lambda,\tau,t+s)$$ 
whenever $s+t \in [0,r(\lambda,\tau)].$

\section{Thermodynamical formalism for the Rauzy gasket}\label{4}
The main result of this section is the following:
\begin{theorem}
There exists a measure of maximal entropy for the suspension flow of the Rauzy gasket, and this measure is unique.
\end{theorem}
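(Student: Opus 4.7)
The plan is to apply Sarig's thermodynamic formalism for countable Markov shifts to the base dynamics $(\sigma, W)$ and then transfer the resulting equilibrium measure to the suspension flow via Abramov's formula. Recall that for any $\sigma$-invariant probability measure $\mu$ with $\int r \, d\mu < \infty$, the normalized product measure $\mu^{\Phi} := (\mu \times \mathrm{Leb})/\int r \, d\mu$ is $\Phi$-invariant and, by Abramov,
$$h_{\mu^{\Phi}}(\Phi) \;=\; \frac{h_\mu(\sigma)}{\int r \, d\mu}.$$
Writing $h^{*}$ for the topological entropy of $\Phi$, maximizing $h_{\mu^{\Phi}}(\Phi)$ among such measures is then equivalent to selecting a $\sigma$-invariant $\mu$ realizing $h_\mu(\sigma) - h^{*} \int r \, d\mu = 0$, i.e.\ an equilibrium state for the potential $-h^{*} r$ of zero pressure.

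Next I would identify $h^{*}$ as the unique root of the pressure equation $P(\sigma, -tr) = 0$, where $P$ denotes the Gurevich pressure. Since $\sigma$ is topologically mixing and satisfies the BIP property, and since $r$ is locally H\"older continuous, of summable variations and bounded away from zero by the lemma of the previous section, Sarig's variational principle applies: the map $t \mapsto P(-tr)$ is convex, strictly decreasing on its domain of finiteness, tends to $-\infty$ as $t \to +\infty$, and satisfies $P(0) = h_{\mathrm{top}}(\sigma) > 0$, the latter positivity being a consequence of the topological mixing on the countable alphabet. By the intermediate value theorem, a unique $h^{*} > 0$ with $P(-h^{*} r) = 0$ exists.

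For this value $t = h^{*}$, Sarig's existence and uniqueness theorem for equilibrium states on topologically mixing BIP countable Markov shifts with locally H\"older potentials of summable variations produces a unique Gibbs measure $\mu_{h^{*}}$ that is simultaneously an equilibrium state for $-h^{*} r$. The required $\int r \, d\mu_{h^{*}} < \infty$ I would extract from the Gibbs property, which bounds the measure of each cylinder by $e^{S_n(-h^{*} r)}$ up to multiplicative constants, forcing exponential decay of the measure of the cylinders on which $r$ takes large values. Lifting by Abramov yields a $\Phi$-invariant probability measure $\mu_{h^{*}}^{\Phi}$ of entropy $h^{*}$, hence a measure of maximal entropy. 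Uniqueness on the flow side follows since any other maximizing $\Phi$-invariant measure would, by the converse direction of Abramov, correspond to a $\sigma$-invariant equilibrium state for $-h^{*} r$, and hence coincide with $\mu_{h^{*}}$ by Sarig's uniqueness theorem.

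The main obstacle I anticipate is the verification that $\int r \, d\mu_{h^{*}} < \infty$ and, relatedly, that the Gurevich pressure $t \mapsto P(-tr)$ is finite throughout a neighborhood of $h^{*}$, both points being what make the Abramov transfer legitimate. Both hinge on sufficiently sharp control of how the roof function grows across the cylinders of the countable alphabet, which itself reduces to exploiting the uniform contraction of the accelerated Rauzy induction in the Hilbert metric (already used in the H\"older estimate for $r$) to obtain summable estimates for the Gurevich partition functions $Z_n(-h^{*} r)$.
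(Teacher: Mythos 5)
Your overall strategy is exactly the paper's: reduce to a pressure equation $P(-tr)=0$ for the Gurevich pressure, invoke Sarig's existence and uniqueness theory for Gibbs/equilibrium states on a topologically mixing BIP countable Markov shift with locally H\"older potential, and transfer to the suspension flow via Abramov's formula (with flow-side uniqueness supplied by Buzzi--Sarig). You have also correctly isolated the genuine difficulty: finiteness of the Gurevich pressure in a neighborhood of the critical parameter, which is what licenses both Sarig's theorems and the Abramov transfer.

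Where your plan is incomplete is the proposed resolution of that difficulty. Uniform contraction of the accelerated Rauzy induction in the Hilbert metric is what gives the local H\"older estimate $\mathrm{var}_n(r)\le C\theta^n$, but it does not by itself bound $\|L_{-\kappa r}1\|_\infty=\sum_a e^{-\kappa r(a)}$: that sum ranges over a countable alphabet, and one must control how many cylinders $a$ have $r(a)\in[N,N+1)$. The paper's key input is the exponential tail estimate for the roof function, $\int e^{\beta r}\,d\lambda<\infty$ for some $\beta>0$, proved separately in \cite{AHS}. Combined with the fact that $e^{3r}=|DT|$ (so cylinders with comparable $r$ have comparable Lebesgue measure), the tail estimate yields $\mathrm{Card}\{a: N\le r(a)<N+1\}\le Ce^{(3-\beta)N}$, whence $\|L_{-\kappa r}1\|_\infty<\infty$ for all $\kappa>3-\beta$, and in particular finiteness of $P(-\kappa r)$ in the relevant range. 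This counting estimate is a genuine measure-theoretic fact about the Rauzy gasket and does not follow from the contraction/distortion bounds you mention; without it your partition-function sums $Z_n(-h^{*}r)$ have no reason to be finite. Supplying the exponential-tail lemma (or citing it from \cite{AHS}) is precisely the step that turns your outline into a proof.
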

\noindent The same results for Teichm\"uller flow were obtained in \cite {BG} and \cite {H}.
\begin{remark}
This measure induces the measure on the Rauzy gasket that was presented in Theorem \ref{main}.
\end{remark}

The proof is based on the thermodynamical formalism for a countable Markov shift developed by O. Sarig (\cite{S0}, \cite{S1}, \cite{S2}). 

\subsection{Ruelle operator}
As in the previous section, we denote a point $(\lambda, \tau)$ of the parameter space by $x$.
Let us consider $\phi(x)=-3r(x),$ where $r(x)$ is the roof function, as a potential. 
Then we can construct a standard Ruelle operator for the Markov shift $\sigma$ based on this potential:
$$L_{\phi}f(x)=\sum_{\sigma(y)=x}e^{\phi(y)}f(y).$$
Our first goal is to prove the following theorem:
\begin{theorem}
$||L_{\phi}1||_{\infty}<+\infty.$
\end{theorem}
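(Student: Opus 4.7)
The plan is to identify $e^{-3r(y)}$ with the Jacobian at $x$ of the inverse branch $T^{-1}_w$ of the accelerated Rauzy induction, and then to bound the resulting sum of Jacobians by a convergent series of the form $\sum_n n^{-3}$ uniformly in $x$.

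For the identification, observe that every preimage $y\in\sigma^{-1}(x)$ corresponds to a single symbol $w=(n,\tau)$, where $n\geq 1$ is an integer and $\tau$ is one of the six vertices of the Rauzy graph (Figure~\ref{G}). This symbol determines an integer matrix $M_w$ with $|\det M_w|=1$, built as a product of one accelerated matrix $R(n)$ with a permutation, and the corresponding inverse branch is the projective map $T^{-1}_w:\mu\mapsto M_w\mu/\|M_w\mu\|_1$ on the $2$-simplex $X\subset\mathbb{R}^{3}$. Using the roof function formula~\eqref{roof} together with the relation $\lambda_y=M_w\lambda_x/\|M_w\lambda_x\|_1$ linking the parameters, one gets $\|(B^{T}_{\gamma})^{-1}\lambda_y\|_1=1/\|M_w\lambda_x\|_1$ and hence $e^{-3r(y)}=1/\|M_w\lambda_x\|_1^{3}$, which by the standard formula for the Jacobian of a projective map on a $2$-simplex sitting in $\mathbb{R}^{3}$ equals $|\det DT^{-1}_w(x)|$. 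Thus $L_\phi 1(x)=\sum_w |\det DT^{-1}_w(x)|$ is precisely the Radon--Nikodym density $d(T_*\mathrm{Leb})/d\mathrm{Leb}$ at $x$, so the problem reduces to showing that this density is uniformly bounded on $X$.

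For the bound, I would use the explicit form of the accelerated matrix $R(n)$ to compute $\|M_{n,\tau}\lambda_x\|_1=1+n\cdot S_\tau(\lambda_x)$, where $S_\tau(\lambda_x)$ is the sum of the two coordinates of $\lambda_x$ that appear with coefficient $n$ in the winner row of $R(n)$. The key observation is that on the image subsimplex $X_\tau$ of $T^{-1}_{n,\tau}$ (which is independent of $n$ by the Markov property), these two coordinates include the coordinate of the new winner, i.e. the maximum of the three components of $\lambda_x$; hence $S_\tau(\lambda_x)\geq 1/3$ uniformly in $x\in X_\tau$ and in $n$. Therefore
$$L_\phi 1(x)\leq \sum_\tau \sum_{n\geq 1}\frac{1}{(1+n/3)^{3}}\leq 6\cdot 27\cdot\zeta(3)<+\infty,$$
independently of $x$.

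The only delicate point is the uniform lower bound $S_\tau\geq 1/3$, which amounts to a finite combinatorial check (one for each of the six vertices of the Rauzy graph) and relies on the fact that the winner row of $R(n)$ is by construction the row whose output coordinate becomes the new winner after the step. The computation also illuminates the role of the exponent $3$ in $\phi=-3r$: it matches the dimension of the ambient vector space $\mathbb{R}^{3}$, yielding the critical exponent at which the series of Jacobians converges (like $\sum n^{-3}$). A smaller exponent would make the series diverge (as in $\sum n^{-2}$), so $3$ is the natural choice guaranteeing $\|L_\phi 1\|_\infty<+\infty$.
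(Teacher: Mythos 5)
Your proof is correct and takes a genuinely different route from the paper's. Both proofs start from the same observation that $e^{-3r(y)}=|DT^{-1}_w(x)|$ is the Jacobian of an inverse branch of the accelerated Markov map (or, equivalently, that $L_\phi 1$ is the density of $T_*\mathrm{Leb}$ with respect to $\mathrm{Leb}$), but the similarity ends there. The paper then sidesteps any explicit computation: it splits the sum over preimages into blocks $Y(N)=\{a : N\le r(a)<N+1\}$, bounds $\mathrm{Card}\,Y(N)\le Ce^{(3-\beta)N}$ using the exponential tail $\int e^{\beta r}\,d\lambda<\infty$ imported from \cite{AHS}, and recognizes a convergent geometric series. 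You instead use the specific algebraic form of the accelerated matrix $R(n)=R_1^nR_2$ or $R_1^nR_3$: in ordered coordinates one has $\|R(n)\lambda_x\|_1=1+(n+1)S_\tau(\lambda_x)$ where $S_\tau(\lambda_x)$ equals $a'+c'$ or $a'+b'$, and since the first (largest) coordinate $a'$ is always among the two summands, $S_\tau(\lambda_x)\ge a'\ge 1/3$; this gives the termwise bound $27/(n+1)^3$ and an elementary convergent series. Your argument is more self-contained (it does not rely on the exponential tail from \cite{AHS}) and gives an explicit numerical bound, at the cost of depending on the particular structure of the matrices; the paper's argument is less computational and would transfer to situations where only the tail estimate is available. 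The route is sound; just be careful that the uniform lower bound $S_\tau\ge 1/3$ is checked in ordered coordinates $(a',b',c')$ (where it reduces to ``the maximum of three positive numbers summing to $1$ is at least $1/3$''), not in positional coordinates $(l_1,l_2,l_3)$, where the row carrying the coefficient $n$ need not point at the largest coordinate.

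One small inaccuracy in your final remark: it is not true that ``a smaller exponent would make the series diverge (as in $\sum n^{-2}$)'' --- $\sum n^{-2}$ converges, and indeed your own estimate shows $\sum_n 1/\|R(n)\lambda_x\|_1^\kappa<\infty$ uniformly for every $\kappa>1$, not just for $\kappa\ge 3$. The exponent $3$ is ``natural'' because it is the dimension of the ambient space $\mathbb{R}^3$ and makes $L_\phi$ the Lebesgue transfer operator (equivalently, it is the exponent appearing in the Jacobian of a projective map on the $2$-simplex), but it is not the critical exponent for finiteness of $\|L_{-\kappa r}1\|_\infty$. This is also why the paper can, in the next theorem, consider potentials $-\kappa r$ for all $\kappa>3$ without any additional difficulty.
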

\begin{proof}
We start from a couple of standard lemmas that relate the roof function (as the first return time) and the measure of Markov cells that we obtain after this time $t$. Both lemmas follow directly from the fact that $e^{3r(x)}=|DT(x)|,$ where $T$ is Markov map (see \cite{AHS}, Lemma 12). 
\begin{lemma}
If $r(a)=r(b)$, then $\lambda([a])=\lambda([b]),$ where $\lambda$ is the Lebesgue measure and $[a],[b]$ are cylinders.
\end{lemma}
\begin{lemma}
If $|r(a)-r(b)|\leq1,$ then $e^{-3}\leq\frac{\lambda([a])}{\lambda([b])}\leq e^{3}.$
\end{lemma}
Now we want to evaluate $L_{\phi}1=\sum_{a\in S}e^{\phi(a)}=\sum_{N=0}^{+\infty}\sum_{s:N\leq r(s)<N+1}e^{\phi(s)},$ where $\phi=-3r.$
One can check that $$L_{\phi}1\leq \sum_{N=0}^{+\infty} Card(\{a: N\leq r(a)<N+1\})e^{-3N}.$$
Let us denote $Y(N)=\{a: N\leq r(a)<N+1\}$.

Let us recall that in \cite{AHS} we proved that the roof function $r$ has an exponential tail: there exists a positive constant $\beta>0$ such that $\int e^{r\beta}d\lambda<+\infty.$
\begin{lemma}
There exists a constant $C>0$ such that 
$Card (Y(N))\leq Ce^{(3-\beta)N},$ where $\beta$ is the constant from the definition of the exponential tail.
\end{lemma}
\begin{proof}
\begin{equation}\label{int}
\int_{\Delta}e^{r\beta}d\lambda = \sum_{N=0}^{+\infty}\Big(\sum_{a\in Y(N)}\int_{[a]}e^{r\beta}d\lambda\Big)<+\infty,
\end{equation}
so \eqref{int} implies that 
$$\sum_{a\in Y(N)}\int_{[a]}e^{r\beta}d \lambda \to 0(N \to+\infty).$$
So, starting from some moment ($N\geq N_{0}) \sum_{a\in Y(N)}\int_{[a]}e^{r\beta}d \lambda<1.$
But $\int_{[a]}e^{r\beta}d \lambda\geq \lambda[a]e^{N\beta}.$
Also, $\sum_{a\in Y(N)}\lambda([a])\geq Card(Y(N))\times e^{-3N-3}.$
So, $$Card(Y(N))\times e^{N\beta}\times e^{-3N-3}<1$$ and the statement of the lemma holds with $C=e^{3}.$
\end{proof}
Now we can finish the proof of our theorem: 
$$||L_{\phi}1||_{\infty}\leq C'\sum_{N=0}^{+\infty}Ce^{(3-\beta)N}e^{-3N},$$ where $C'$ is some constant. Then, we have a geometric series with a common ratio $e^{-\beta}<1.$ The series converges.
\end{proof}
\subsection{Existence and uniqueness of the Gibbs measure.}
The definition of the Gibbs measure can be found in \cite{S2}.
\begin{theorem}\label{77}
Let us consider the potential $\phi=-\kappa r,$ where $r$ is the roof function and $\kappa>3.$
Then for the Markov map $T$ defined on the Rauzy gasket there exists an invariant Gibbs measure with this potential, and this measure is unique.
\end{theorem}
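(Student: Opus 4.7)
The plan is to reduce Theorem \ref{77} to a direct application of Sarig's theorem on Gibbs measures for countable Markov shifts (as in \cite{S2}), by verifying that the potential $\phi = -\kappa r$ satisfies all the required hypotheses. The structural ingredients are already in place: the preceding lemmas have established that the Rauzy induction gives a topologically mixing countable Markov shift with the BIP property, and that $r$ is locally Hölder continuous and bounded away from zero.

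First I would verify that $\phi = -\kappa r$ is itself locally Hölder continuous, which is immediate since $r$ is (multiplying by the constant $-\kappa$ preserves the estimate $\operatorname{var}_n(\phi) \le \kappa C_0 \theta^n$). In particular $\phi$ has summable variations, the regularity needed for Sarig's framework.

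Next, I would bound the Ruelle operator, adapting verbatim the argument from the previous theorem. Writing
\begin{equation*}
L_\phi 1 = \sum_{N=0}^{+\infty} \sum_{a : N \le r(a) < N+1} e^{-\kappa r(a)} \le \sum_{N=0}^{+\infty} \operatorname{Card}(Y(N)) \, e^{-\kappa N},
\end{equation*}
and using the same estimate $\operatorname{Card}(Y(N)) \le C e^{(3-\beta)N}$ obtained from the exponential tail of $r$ (proved in \cite{AHS}), one gets a geometric series with common ratio $e^{3-\beta-\kappa}$. Since $\kappa > 3 > 3 - \beta$, the series converges, so $\|L_\phi 1\|_\infty < \infty$. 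Under BIP and topological mixing, this is equivalent to the finiteness of the Gurevich pressure $P_G(\phi)$.

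Finally, I would invoke Sarig's theorem: a locally Hölder potential with finite Gurevich pressure on a topologically mixing countable Markov shift satisfying the BIP property admits a unique invariant Gibbs measure. All three hypotheses have been verified, so the theorem applies and gives both existence and uniqueness of the Gibbs measure for $T$ with potential $\phi = -\kappa r$. The main ``obstacle'' here is essentially bookkeeping — matching the conventions of Sarig's framework to ours and ensuring that the constant $\kappa > 3$ (rather than $\kappa = 3$) is precisely what one needs to keep the Ruelle series strictly summable; for $\kappa = 3$ the estimate becomes borderline and one only recovers the weaker bound from the previous subsection, while $\kappa > 3$ gives a genuine geometric decay with room to spare.
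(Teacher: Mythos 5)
Your proposal follows essentially the same route as the paper's proof: establish local H\"older regularity of $\phi=-\kappa r$ (hence summable variations), bound the Ruelle operator via the exponential-tail estimate $\operatorname{Card}(Y(N))\le Ce^{(3-\beta)N}$ to obtain finiteness of the Gurevich--Sarig pressure, and then invoke Sarig's existence-and-uniqueness theorem for Gibbs measures on a BIP, topologically mixing countable Markov shift. One small correction to your closing remark: the case $\kappa=3$ is not ``borderline'' for your own estimate, since the common ratio $e^{3-\beta-\kappa}$ equals $e^{-\beta}<1$ already at $\kappa=3$, and indeed the preceding theorem in the paper proves $||L_\phi 1||_\infty<\infty$ exactly for $\phi=-3r$; the role of $\kappa>3$ in the paper is rather to let one deduce $L_{-\kappa r}1\le L_{-3r}1$ from $r>0$ by monotonicity and thereby reuse that theorem directly, rather than re-running the Ruelle estimate as you did.
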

\begin{proof}
We need one additional notation and then one more important definition.
Let us denote the n-th ergodic sum for $\phi$: $$\Phi_{n}(x)=\sum_{k=0}^{n-1}\phi(\sigma^{k}(x)).$$
Then $$Z_{m}(\phi,i)=\sum_{\sigma^{m}(x)=x, x_{0}=i}exp(\Phi_{n}(x)).$$
\begin{definition}
The Gurevich-Sarig pressure is $P_{G}=lim_{m\rightarrow\infty}\frac{\log Z_{m}}{m}.$
\end{definition}
In \cite{S1} it was proved that it does not depend on $i$ and the variational principle holds:
\begin{equation}\label{var}
sup\{h_{\mu}(\sigma)+\int\phi d\mu\}<\infty.
\end{equation}
The sup is taken for all measures such that $\int (-\phi)d\mu<\infty.$
We need two following theorems by Sarig. First, 
\begin{theorem}\label{S1}
[Sarig] If $\Sigma$ is a topologically mixing countable Markov shift and the potential $\phi$ is locally H\"older with $||L_{\phi}1||_{\infty}<+\infty,$ then $P_{G}(\phi)<+\infty,$ where $P_{G}$ is the Gurevich-Sarig pressure.
\end{theorem}
So, in our case we have the finite Gurevich-Sarig pressure. Then, we have the following:

\begin{theorem} \label{S2}
[Sarig] Assume that $\phi$ has summable variations. Then $\phi$ admits a unique $\sigma$-invariant Gibbs measure if and only if $\sigma$ satisfies BIP property and the Gurevich-Sarig pressure is finite.
\end{theorem}

For proofs see \cite{S1}.
So, we can apply this theorem to the ``if'' side and obtain that there exists unique Gibbs measure that is invariant for our Markov shift. 
This ends the proof of Theorem \ref{77}.
\end{proof}
\subsection{Existence and uniqueness for the equilibrium measure}
For the definition of the equilibrium measure see \cite{Bo}.
\begin{theorem}
Let us consider the potential $\phi=-\kappa r,$ where $r$ is the roof function. Then for the Markov map $T$ defined of the Rauzy gasket there exists an invariant equilibrium measure, and this measure is unique.
\end{theorem}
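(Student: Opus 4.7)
The plan is to identify the equilibrium measure with the Gibbs measure constructed in Theorem \ref{77}, and then invoke Sarig's general criterion for a Gibbs measure to be the unique equilibrium state. Concretely, I would split the proof into (existence), (uniqueness), and the verification of the integrability hypothesis that links the two.

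For existence, I would take $\mu_\phi$ to be the invariant Gibbs measure produced by Theorem \ref{77}. The general theory of Sarig (see \cite{S1}, \cite{S2}) guarantees that if a $\sigma$-invariant Gibbs measure satisfies $\int (-\phi)\, d\mu_\phi<\infty$, then it is an equilibrium state, that is, it attains the supremum
\[
P_{G}(\phi)=\sup_{\mu}\Bigl\{h_{\mu}(\sigma)+\int\phi\, d\mu\Bigr\}
\]
appearing in the variational principle \eqref{var}. So the task reduces to checking that $\int r\, d\mu_{\phi}<\infty$.

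This integrability is where the exponential tail estimate for $r$ established in \cite{AHS} and already exploited in the proof of Theorem \ref{77} does the work. Since $\mu_{\phi}$ is Gibbs, there exists $M>0$ with $\mu_{\phi}[C(w)]\le M\exp(\Phi_n(w)-nP_G(\phi))$ for every admissible word $w$ of length $n$; combined with the bound $\mathrm{Card}(Y(N))\le Ce^{(3-\beta)N}$ proved above and the fact that $r$ is constant equal to a value in $[N,N+1)$ on each cylinder of $Y(N)$, one gets
\[
\int r\, d\mu_{\phi}\;\le\;\sum_{N\ge 0}(N+1)\,\mu_{\phi}\bigl(\{x:r(x)\in[N,N+1)\}\bigr)\;\le\;C'\sum_{N\ge 0}(N+1)\,e^{(3-\beta)N}e^{-\kappa N},
\]
which converges as soon as $\kappa>3-\beta$, hence in particular for $\kappa>3$. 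This gives $\int\phi\, d\mu_{\phi}>-\infty$, so $\mu_{\phi}$ is indeed an equilibrium measure.

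For uniqueness I would invoke Sarig's theorem on uniqueness of equilibrium states for locally H\"older potentials over topologically mixing countable Markov shifts with the BIP property: under these hypotheses, any equilibrium measure $\nu$ with $\int\phi\, d\nu>-\infty$ must coincide with the Gibbs measure $\mu_{\phi}$. All the structural hypotheses have already been verified in the paper (topological mixing and BIP for $\sigma$, local H\"older continuity and summable variations of $r$, and finiteness of the Gurevich--Sarig pressure via Theorem \ref{S1}), so only the integrability of $\phi$ against a competing equilibrium state needs to be addressed; but if $\int\phi\, d\nu=-\infty$ then $h_{\nu}(\sigma)+\int\phi\, d\nu=-\infty<P_{G}(\phi)$, so $\nu$ cannot be an equilibrium state. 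The main technical delicacy I would expect is precisely this bookkeeping around the integrability threshold, together with checking that $\kappa>3$ really is enough (rather than requiring $\kappa>3-\beta$ or similar) for every step of the argument, since the existence of the Gibbs measure in Theorem \ref{77} was phrased only for $\kappa>3$ and one needs the same range to hold for both the Ruelle operator bound and the $L^{1}$-integrability of $r$.
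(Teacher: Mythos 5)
Your proposal is correct and lives entirely within Sarig's thermodynamical formalism, but it takes a different route through it than the paper. The paper simply invokes Corollary 2 of \cite{S2} to obtain that $\phi$ is positive recurrent and produces an RPF triple $(\lambda, h, \nu)$ with $h$ bounded away from $0$ and $\infty$ and $\nu$ finite, declares existence from this, and cites \cite{BS} for uniqueness. You instead take the Gibbs measure $\mu_\phi$ already produced by Theorem \ref{77}, verify directly that $\int r\, d\mu_\phi<\infty$ using the Gibbs bound on cylinder measures together with the counting estimate $\mathrm{Card}(Y(N))\le Ce^{(3-\beta)N}$, and then conclude via the variational principle that $\mu_\phi$ is the equilibrium state. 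Your route has the advantage of making explicit the integrability check that the paper leaves buried inside the citation of Corollary 2, and it reuses the tail estimate already developed for the pressure bound. Two small points. First, you write that $r$ is constant on each depth-one cylinder of $Y(N)$; this is not literally true (indeed the whole H\"older estimate for $r$ shows its variation on a depth-$n$ cylinder is of order $\theta^n$), but since $r$ is locally H\"older one has $\mathrm{var}_1(r)<\infty$, so the argument only costs a uniform multiplicative constant — the paper is equally loose here. Second, for uniqueness you should cite Buzzi--Sarig \cite{BS} (as the paper does): Sarig's own uniqueness results from \cite{S1}, \cite{S2} concern uniqueness of the Gibbs measure, whereas the uniqueness of the equilibrium state, which requires handling competing measures with possibly infinite entropy or non-integrable potential, is the content of \cite{BS}. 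With those adjustments your proof is sound and arguably more self-contained than the one in the paper.
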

\begin{proof}
In accordance with Corollary 2 from \cite{S2} our potential $\phi$ is positive recurrent (see \cite{S2} for precise terminology) and there exist $\lambda>0, h>0$ and $\nu$ - conservative Borel measure such that the following conditions hold:
\begin{itemize}
\item $L_{\phi}h=\lambda h$;
\item $L^{*}_{\phi}\nu=\lambda\nu$;
\item $\int hd\nu<\infty$;
\item $h$ is bounded away from zero and infinity and $\nu$ is finite.
\end{itemize}

So, the equilibrium measure exists, and the uniqueness follows from \cite{BS}.
\end{proof}

\subsection{Existence and uniqueness of the measure of maximal entropy for the flow}
Let us consider the family of potentials $\phi(\kappa)=-\kappa r$ and the value of the corresponding Ruelle operator for each of them in point $f=1$: $$L_{\phi(\kappa)}1(x)=\sum_{y: \sigma(y)=x}e^{-\kappa r(y)}.$$
We need the following technical 

\begin{lemma}
The following limit holds as $n\rightarrow\infty:$
$$\frac{\log L^{n}_{\phi(\kappa)}(1)(x)}{n}\rightarrow P_{\kappa}.$$
\end{lemma}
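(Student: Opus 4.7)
My plan is to deduce this convergence directly from the spectral data established in the previous subsection on existence and uniqueness of the equilibrium measure. Recall that by invoking Sarig's Corollary 2, we already produced a positive eigenfunction $h$ with $L_{\phi(\kappa)} h = \lambda h$, a conservative eigenmeasure $\nu$ with $L^{*}_{\phi(\kappa)} \nu = \lambda \nu$, and the crucial additional information that $h$ is bounded away from both zero and infinity; here the eigenvalue satisfies $\lambda = e^{P_{\kappa}}$. The idea is to sandwich the constant function $1$ between multiples of $h$ and transport these inequalities across the positive operator $L_{\phi(\kappa)}$.

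Concretely, write $m = \inf h > 0$ and $M = \sup h < \infty$. Then
\begin{equation*}
\frac{1}{M}\, h \;\leq\; 1 \;\leq\; \frac{1}{m}\, h
\end{equation*}
pointwise. Because $L_{\phi(\kappa)}$ is a positive operator (its kernel is a sum of exponentials), iterating preserves inequalities, and using $L^{n}_{\phi(\kappa)} h = \lambda^{n} h$ we obtain
\begin{equation*}
\frac{\lambda^{n}}{M}\, h(x) \;\leq\; L^{n}_{\phi(\kappa)}(1)(x) \;\leq\; \frac{\lambda^{n}}{m}\, h(x),
\end{equation*}
which combined with the uniform bounds on $h$ yields
\begin{equation*}
\frac{m}{M}\, \lambda^{n} \;\leq\; L^{n}_{\phi(\kappa)}(1)(x) \;\leq\; \frac{M}{m}\, \lambda^{n}
\end{equation*}
for every $x$ in the shift space. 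Taking logarithms, dividing by $n$, and sending $n\to\infty$ gives
\begin{equation*}
\frac{1}{n}\log L^{n}_{\phi(\kappa)}(1)(x) \longrightarrow \log \lambda = P_{\kappa},
\end{equation*}
uniformly in $x$.

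The only point that requires justification, beyond the quoted results from Sarig, is the identification $\log \lambda = P_{\kappa}$ of the spectral radius of $L_{\phi(\kappa)}$ with the Gurevich--Sarig pressure; this is a standard consequence of Sarig's characterization of $P_{G}$ through $\limsup \tfrac{1}{n}\log L^{n}_{\phi}(1_{[a]})$ combined with the positive recurrence already established. I expect the main (mild) obstacle to be verifying that the hypotheses of Corollary 2 from \cite{S2} do apply to the family $\phi(\kappa) = -\kappa r$ for the values of $\kappa$ under consideration: we need $\phi(\kappa)$ to have summable variations (immediate from the Hölder property of $r$), the shift to be topologically mixing with BIP (already shown), and finiteness of $P_{\kappa}$ together with positive recurrence. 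The positive recurrence and finiteness follow, just as in the previous subsection, from the exponential tail estimate for $r$ proved in \cite{AHS}, which guarantees that $\|L_{\phi(\kappa)} 1\|_{\infty} < \infty$ for the relevant range of $\kappa$ and hence the spectral picture outlined above applies verbatim.
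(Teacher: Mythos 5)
Your argument is correct, but it follows a genuinely different route from the paper. The paper's proof is one line: it invokes Theorem~1 of Sarig's \emph{Thermodynamic formalism for countable Markov shift} (\cite{S0}), which directly gives $\lim_n \tfrac{1}{n}\log L^n_\phi(1) = P_G(\phi)$ for any locally H\"older potential on a topologically mixing countable Markov shift, with the understanding that the limit can equal $+\infty$; finiteness for $\kappa>3$ is then read off from Theorem~\ref{77}. You instead deduce the convergence from the Ruelle--Perron--Frobenius data of the positive recurrent case (eigenfunction $h$ bounded away from $0$ and $\infty$, eigenvalue $\lambda=e^{P_\kappa}$), sandwiching $1$ between $\tfrac{1}{M}h$ and $\tfrac{1}{m}h$ and iterating the positive operator. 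That is a legitimate and in some ways more informative argument: it yields \emph{uniform} convergence with explicit two-sided bounds $\tfrac{m}{M}\lambda^n \le L^n_{\phi(\kappa)}1 \le \tfrac{M}{m}\lambda^n$, which is stronger than the pointwise statement. The trade-off is scope: your argument requires positive recurrence and the uniform bounds on $h$, which the paper has only established for $\kappa>3$; the paper's own proof via Sarig's Theorem~1 covers all $\kappa\ge 0$ (including the regime where $P_\kappa=+\infty$), which is used in the very next paragraph to discuss $P_0$ and the monotonicity of $\kappa\mapsto P_\kappa$. If you want your proof to be a full replacement, you should either restrict the lemma's statement to the positive recurrent range, or supply a separate (elementary or Sarig-based) argument for the case $P_\kappa=+\infty$; as written, the RPF sandwich simply does not apply there because no bounded eigenfunction $h$ exists.
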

\begin{proof}
This fact is a direct application of Theorem 1 from \cite{S0}. Moreover, from this result by Sarig we know that convergence holds for every $\kappa$ but the limit can be equal to $+\infty$. In the case of $\kappa>3$ 
due to Theorem \ref{77} the limit is bounded above by some constant.
\end{proof}

Now, the fact that the roof function is bounded away from 0 implies that $P_{\kappa}\to -\infty$ when $\kappa \to\infty.$ On the other hand, $P_{0}>0$ or $P_{0}=+\infty,$ and in both situations $P_{\kappa}$ is a decreasing continuous function of $\kappa$.
So we have that
\begin{lemma}
There exists $\kappa_{0}$ such that $P_{\kappa_{0}}=0.$
\end{lemma}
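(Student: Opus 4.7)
The plan is a direct application of the intermediate value theorem to the pressure function $\kappa\mapsto P_{\kappa}$, using the three ingredients already collected just above the statement. Specifically: (i) $P_{\kappa}\to-\infty$ as $\kappa\to\infty$, which follows from $r\geq\log 3$ together with the formula $P_{\kappa}=\lim_{n\to\infty}n^{-1}\log L_{\phi(\kappa)}^{n}1(x)$; (ii) on the interval where $P_{\kappa}$ is finite (which by Theorem \ref{77} contains $(3,\infty)$), the function $P_{\kappa}$ is continuous and decreasing in $\kappa$; and (iii) $P_{0}>0$ or $P_{0}=+\infty$.

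In the case $P_{0}<\infty$, the pressure is continuous on all of $[0,\infty)$ with $P_{0}>0$ and $\lim_{\kappa\to\infty}P_{\kappa}=-\infty$, so the intermediate value theorem immediately yields the desired $\kappa_{0}\in(0,\infty)$ with $P_{\kappa_{0}}=0$. In the case $P_{0}=+\infty$, I would set $\kappa_{*}=\inf\{\kappa\geq 0:P_{\kappa}<\infty\}\in[0,3]$. Since $\kappa\mapsto-\kappa r$ is affine in $\kappa$ and pressure is convex in the potential, $P$ is convex on $[0,\infty)$ and hence lower semicontinuous, which forces $P_{\kappa}\to+\infty$ as $\kappa\downarrow\kappa_{*}$. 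In particular some $\kappa^{*}>\kappa_{*}$ satisfies $P_{\kappa^{*}}>0$, and the intermediate value theorem applied on the interval $[\kappa^{*},\infty)$ (on which $P$ is continuous, decreasing, strictly positive at the left endpoint and $-\infty$ at the right) produces the required $\kappa_{0}$.

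The main obstacle is more stylistic than substantive: all of the real work has been done in the preceding lemmas (finiteness of pressure for $\kappa>3$, continuity, monotonicity, limit at infinity, and the dichotomy at $\kappa=0$), so the only task is to organize the IVT cleanly and to handle the slightly delicate jump from $P_{0}=+\infty$ down into the finite region, for which convexity of the pressure is the key input. Uniqueness of $\kappa_{0}$, although not explicitly requested, follows as a bonus from strict monotonicity of $P_{\kappa}$ on its finiteness region, itself a consequence of $r$ being bounded below by a positive constant.
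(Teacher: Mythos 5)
Your approach is essentially the paper's: the paper simply observes that $P_\kappa\to-\infty$ as $\kappa\to\infty$, that $P_0>0$ or $P_0=+\infty$, and that $P_\kappa$ is a decreasing continuous function, and then declares the existence of a zero; you are spelling out the intermediate value theorem that the paper leaves implicit.

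One step in your $P_0=+\infty$ case does not hold as stated. You claim that convexity of $\kappa\mapsto P_\kappa$ forces $P_\kappa\to+\infty$ as $\kappa\downarrow\kappa_*$, where $\kappa_*$ is the infimum of the finiteness region. Convexity gives continuity on the \emph{interior} of the finiteness domain, but it does not control the one-sided limit at the boundary of that domain: a convex function that is $+\infty$ on $(-\infty,0)$ and finite on $(0,\infty)$ can converge to a finite (even negative) limit as the argument decreases to $0$ (for instance $f(x)=-\sqrt{x}$ on $(0,\infty)$, $f=+\infty$ for $x<0$). So convexity alone does not rule out the scenario where $P_\kappa$ jumps from $+\infty$ down to a nonpositive finite value, in which case the intermediate value theorem would find no zero. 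That said, the paper's own proof simply asserts "$P_\kappa$ is a decreasing continuous function of $\kappa$" without justification, so this gap is inherited rather than introduced; closing it rigorously requires a finer input from Sarig's thermodynamic formalism (e.g.\ behavior of the Gurevich--Sarig pressure at the boundary of its finiteness region for BIP shifts), not just abstract convexity.
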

\begin{theorem}
The Gibbs measure that corresponds to the potential $\phi=-\kappa_{0}r$ is the measure of maximal entropy for the suspension flow, and the measure of maximal entropy is unique.
\end{theorem}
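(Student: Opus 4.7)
The plan is to reduce the problem on the suspension flow to one on the base countable Markov shift $\sigma$ via Abramov's formula, and then exploit the variational principle together with the equilibrium measure theory already developed in the previous subsection. This mirrors the strategy of \cite{BG} and \cite{H} in the Teichm\"uller setting.

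First I would invoke Abramov's formula: every $\Phi$-invariant probability measure $\tilde\mu$ on the suspension $Y$ is of the form $\tilde\mu = (\mu \otimes \mathrm{Leb})/\int r\,d\mu$ for some $\sigma$-invariant probability measure $\mu$ on the base with $\int r\,d\mu<\infty$, and conversely. The entropies are related by
\begin{equation*}
h_{\tilde\mu}(\Phi) \;=\; \frac{h_{\mu}(\sigma)}{\int r\,d\mu}.
\end{equation*}
In particular, maximising $h_{\tilde\mu}(\Phi)$ over $\Phi$-invariant probabilities amounts to maximising $h_\mu(\sigma)/\int r\,d\mu$ over $\sigma$-invariant probabilities with finite roof integral.

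Next I would use the variational principle \eqref{var} from Sarig's theory, applied to the potential $\phi=-\kappa_0 r$: for every $\sigma$-invariant probability measure $\mu$ with $\int r\,d\mu<\infty$ one has
\begin{equation*}
h_\mu(\sigma) - \kappa_0 \int r\,d\mu \;\le\; P_{\kappa_0} \;=\; 0,
\end{equation*}
so $h_{\tilde\mu}(\Phi)\le \kappa_0$. The Gibbs measure $\mu_{\kappa_0}$ attains equality, because it is the (unique) equilibrium state for $-\kappa_0 r$ produced in the previous subsection; hence its Abramov lift $\tilde\mu_{\kappa_0}$ achieves $h_{\tilde\mu_{\kappa_0}}(\Phi)=\kappa_0$. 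This already shows that a measure of maximal entropy exists and has entropy $\kappa_0$. For uniqueness, suppose $\tilde\nu$ is any $\Phi$-invariant probability with $h_{\tilde\nu}(\Phi)=\kappa_0$; pulling it back through Abramov produces $\nu$ with $h_\nu(\sigma)=\kappa_0\int r\,d\nu$, so $\nu$ is an equilibrium state for $-\kappa_0 r$. By the uniqueness part of the previous theorem, $\nu=\mu_{\kappa_0}$ and therefore $\tilde\nu = \tilde\mu_{\kappa_0}$.

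The main obstacle I anticipate is a delicate technical point rather than a conceptual one: one must verify that the equilibrium/Gibbs theory established earlier in this section really applies at the specific parameter $\kappa_0$. Concretely this requires checking that $\kappa_0>3$ (so that $\|L_{\phi(\kappa_0)}1\|_\infty<\infty$ and Sarig's BIP-based machinery is applicable), and that the continuity/monotonicity of $\kappa\mapsto P_\kappa$ established above is strict enough to pin down $\kappa_0$ uniquely. A secondary subtlety is restricting to measures with $\int r\,d\mu<\infty$ in the variational principle; since $r$ is bounded below by $\log 3$ and has exponential tail, any $\sigma$-invariant measure of positive entropy that is relevant for the flow satisfies this, so measures with infinite roof integral cannot contribute flow measures in the first place and may be safely discarded.
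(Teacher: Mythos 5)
Your argument is correct and follows essentially the same route as the paper: you invoke Abramov's formula to transfer between flow-invariant and base-invariant measures, use the variational principle for $-\kappa_0 r$ together with $P_{\kappa_0}=0$ to bound $h_{\tilde\mu}(\Phi)\le\kappa_0$, show the Gibbs/equilibrium measure attains this bound, and derive uniqueness by pulling a maximizing flow measure back to an equilibrium state on the base. One minor note: the worry about whether $\kappa_0>3$ is a false alarm — Sarig's existence/uniqueness theorem only requires BIP and finite Gurevich pressure, and $P_{\kappa_0}=0<\infty$ is automatic, so the Gibbs machinery applies at $\kappa_0$ without any further numerical verification.
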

\begin{proof}
Let us denote the Gibbs measure that corresponds to the potential $\phi_{\kappa_{0}}=-\kappa_{0}r$ by $\mu_{0}$ (this measure exists, and it is unique).
First, note that any invariant measure for the suspension flow can be associated with the invariant measure for the transformation, and vice versa (see \cite{BG} for the formula).
We want to prove that the measure $\hat \mu_{0}$ for the suspention flow that is associated with $\mu_{0}$ is the measure of maximal entropy. 
As Sarig proved, $P_{\kappa_{0}}=sup_{\mu}\{h_{\mu}(\sigma)-\int_{X}\kappa_{0}rd\mu\}$, and $\mu_{0}$ is exactly the measure for which $sup$ value is achieved. 
But in our case $P_{\kappa_{0}}=0$. Therefore, 
$$h_{\mu_{0}}(\sigma)-\int_{X}\kappa_{0}rd\mu_{0}=0$$
and for any other invariant $\mu$
$$h_{\mu}(\sigma)-\int_{X}\kappa_{0}rd\mu<0.$$

\noindent Now one can apply Abramov formula (\cite{AR}) to check that $h_{\hat\mu_{0}}(\Phi)=\frac{h_{\mu_{0}}(\sigma)}{\int_{X} rd\mu_{0}}=\kappa_{0}$ and for any other $\hat\mu$ $h_{\hat\mu}(\Phi)<\kappa_{0}$ ($\Phi$ here is the flow).

\noindent Uniqueness follows from \cite{BS}.
\end{proof}

\section{Lyapunov exponents}\label{5}
\subsection{The Lyapunov exponents for the standard cocycle}
In this section we introduce the Lyapunov exponents for the cocycle $B$. Our main tool is the multiplicative ergodic theorem by V. Oseledets (\cite{O}, Theorem 2 and Theorem 4). 
Let us check that all the conditions of this theorem are satisfied in our case.

\noindent First, the suspension flow $\Phi$ preserves the measure of maximal entropy $\mu=\hat\mu_{0}$ and is ergodic with respect to this measure. The last fact is a direct corollary of the results by Sarig (\cite{Sa}, Section 4.3.3, Theorem 4.7) on the ergodicity (more precisely, the strongly mixing property) of the measure $\mu$.

\noindent Now, the cocycle $B$ constructed in section 3.6 is a measurable cocycle over the flow because the cocycle is locally constant. 

\noindent Finally, $B$ and $B^{-1}$ are $\log$ - integrable with respect to the considered measure:
\begin{lemma}
$$\int\log^{+}||B||d\mu<+\infty,$$
$$\int\log^{+}||(B^{-1})^{T}||d\mu<+\infty.$$
\end{lemma}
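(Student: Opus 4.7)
The plan is to reduce both integrability bounds to the integrability of the roof function $r$ under the Gibbs measure $\mu_0$ on the base, which then transfers to the flow $(Y,\hat\mu_0)$ via Abramov's correspondence. The crucial input will be a uniform comparison $\log \|B_\gamma\| = r + O(1)$ across cylinders $C(\gamma)$ of the base shift, combined with the exponential tail of $r$ already established in the proof of Theorem~\ref{77}.

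First I will identify the cocycle matrix with the Rauzy matrix. A direct computation shows that the basic block $B(n)$ from Section 3.6 is precisely the transpose $(R_1^n)^T$, so for any finite path $\gamma$ in $\Gamma$ the matrix $B_\gamma$ is obtained from the Rauzy matrix $R_\gamma$ by transposition together with permutation factors. Since permutations are isometries of the operator norm, $\|B_\gamma\| = \|R_\gamma\|$ and $\|B_\gamma^{-1}\| = \|R_\gamma^{-1}\|$ up to a universal constant. Next, starting from
$$r(\lambda,\tau) = -\log \|(B_{\gamma_*}^T)^{-1}\lambda\| = -\log \|R_\gamma^{-1}\lambda\| = \log \|R_\gamma \lambda'\|_1,$$
where $\lambda'$ is the normalized preimage lying in the base subsimplex $\Delta_{\gamma_*}$, I will use that $\gamma_*$ is complete, hence its associated matrix is strictly positive, so $\Delta_{\gamma_*}$ is compactly contained in the interior of the parameter simplex and its points have coordinates uniformly bounded below by some $c_0>0$. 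Non-negativity of the entries of $R_\gamma$ then gives
$$c_0 \, \|R_\gamma\|_1 \;\le\; \|R_\gamma \lambda'\|_1 \;\le\; \|R_\gamma\|_1,$$
so $\log \|R_\gamma\| = r + O(1)$ uniformly in $\gamma$. For the inverse, all Rauzy matrices are integer unimodular, so the entries of $B_\gamma^{-1}$ are $2\times 2$ minors of $B_\gamma$, yielding $\|B_\gamma^{-1}\| \le C \|B_\gamma\|^2$ and hence $\log \|B_\gamma^{-1}\| \le 2r + O(1)$.

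Finally I integrate. The Gibbs property of $\mu_0$ gives $\mu_0(C(\gamma)) \asymp e^{-\kappa_0 r(\gamma)}$ (with the vanishing pressure $P_{\kappa_0}=0$ absorbed), and combining this with the counting bound $\operatorname{Card}(Y(N)) \le C e^{(3-\beta)N}$ established in the proof of Theorem~\ref{77} yields exponential decay of $\mu_0\{r \ge N\}$, so $r$ lies in $L^p(\mu_0)$ for every $p$. Log-integrability of $B_\gamma$ and $B_\gamma^{-1}$ over the base then follows, and the corresponding statements on $(Y,\hat\mu_0)$ are immediate from Abramov's formula for suspension flows with integrable roof function. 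The one delicate point I anticipate is the uniform comparison $\log \|R_\gamma\| = r + O(1)$: it rests on the positivity of the matrix associated with the complete base path $\gamma_*$, which confines $\Delta_{\gamma_*}$ to the interior of the parameter simplex and provides the uniform lower bound on the coordinates of $\lambda'$ used to compare $\|R_\gamma\lambda'\|_1$ with $\|R_\gamma\|_1$.
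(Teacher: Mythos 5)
Your overall strategy coincides with the paper's: reduce both bounds to $\int r\,d\mu<\infty$, control $\log^+\|B_\gamma\|$ and $\log^+\|B_\gamma^{-1}\|$ by $r$ up to bounded errors (the adjugate/unimodularity trick $\|B_\gamma^{-1}\|\le C\|B_\gamma\|^2$ for $B_\gamma\in SL(3,\mathbb{Z})$ is exactly the paper's ``since $B\in SL(3,\mathbb{R})$'' remark), and transfer to the flow. Your unpacking of the comparison $\log\|R_\gamma\|=r+O(1)$ — using that the complete path $\gamma_*$ has a strictly positive matrix, so that $\Delta_{\gamma_*}$ is compactly contained in the open simplex and the preimage $\lambda'$ has coordinates bounded below — is the right way to make precise what the paper merely asserts ``by \eqref{roof}.'' That part is an improvement in explicitness.

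Where you diverge from the paper is in establishing $\int r\,d\mu_0<\infty$. You argue via the counting bound $\operatorname{Card}(Y(N))\le Ce^{(3-\beta)N}$ and the Gibbs property $\mu_0(C(a))\asymp e^{-\kappa_0 r(a)}$, claiming this gives exponential decay of $\mu_0\{r\ge N\}$. That estimate requires $\kappa_0>3-\beta$, which you never verify and which is not supplied by anything in the paper; indeed, since the Rauzy gasket has zero Lebesgue measure, one expects $P_G(-3r)<0$ and hence $\kappa_0<3$, so the sign of $\kappa_0-(3-\beta)$ depends on the unknown relation between $\kappa_0$ and $\beta$. The paper avoids this entirely: it invokes Sarig's variational principle \eqref{var}, in which the supremum is taken only over invariant measures with $\int(-\phi)\,d\mu<\infty$, so the equilibrium/Gibbs measure $\mu_0$ satisfies $\kappa_0\int r\,d\mu_0<\infty$ by definition of the class over which the variational principle is formulated. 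You should replace the tail argument by this direct appeal to the variational principle (or otherwise establish $\kappa_0>3-\beta$) before the rest of your proof closes cleanly.
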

\begin{proof}
Due to Theorem \ref{S1} we know that the Gurevich - Sarig pressure $P_{G}$ is finite as well as the entropy $h_{\mu}$, and also \eqref{var} holds.
It implies that $\int \phi d\mu<\infty$ where $\phi$ is the potential: $\phi = -\kappa_{0} r$. Therefore $\int rd\mu<\infty$ and so by (\ref{roof}) $\int\log^{+}||(B^T)^{-1}||d\mu<+\infty.$ Since $B\in SL(3,\mathbb R),$ the last statement implies that the cocycle $B$ is also log-integrable. 
\end{proof}

\noindent Therefore, we can apply Oseledets theorem: 
\begin{theorem}
There exist numbers $\lambda_{1}\geq \lambda_{2} \geq\lambda_{3}$ such that for almost all points $x$ from the Rauzy gasket there exists a filtration $\mathbb R^{3}=E_{1}(x)\supseteq E_{2}(x)\supseteq E_{3}(x)\supseteq E_{4}(x)=0$ and
for every $v \in E_{i}\setminus E_{i+1}$
$$\lim_{N\to\infty}\frac{\log||B(x)v||}{N}=\lambda_{i},$$ where $B=B^{(N)}$ is the matrix of the cocycle comprising $N$ blocks $B(n_i)$ (possible with permutations). 
\end{theorem}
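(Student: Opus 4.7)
The plan is to apply the multiplicative ergodic theorem of Oseledets directly; the three hypotheses required -- ergodicity of the base dynamics, measurability of the cocycle, and log-integrability of the cocycle generators -- have all been verified in the discussion preceding the statement. Ergodicity of $\Phi$ with respect to $\mu$ follows from the strong mixing results of Sarig cited in the text; measurability of $B$ is immediate because the cocycle is locally constant on cylinders; and log-integrability of $B$ and $B^{-1}$ is exactly the content of the lemma just proved, which in turn rests on the exponential tail estimate for the roof function $r$ established earlier and on the fact that $B \in SL(3,\mathbb R)$.

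Since the statement of the theorem is phrased in terms of the discrete product $B^{(N)}$ of matrices associated to $N$ consecutive steps of the accelerated Rauzy induction, the correct object to feed into Oseledets is the discrete-time dynamical system $(\sigma,\mu_0)$, where $\mu_0$ is the Gibbs/equilibrium measure on the base of the suspension (the measure whose suspension is $\mu$ via the Abramov correspondence used in the previous section). Ergodicity of $(\sigma,\mu_0)$ is equivalent to that of $(\Phi,\mu)$, and the cocycle matrix $B^{(N)}(x)$ is precisely the Birkhoff product of the generator $B$ along the $\sigma$-orbit of $x$. Applying the discrete version of Oseledets (\cite{O}, Theorem 2) then produces numbers $\lambda_1 \geq \lambda_2 \geq \lambda_3$ and, on a $\mu_0$-full set, a filtration $\mathbb R^3 = E_1(x) \supseteq E_2(x) \supseteq E_3(x) \supseteq E_4(x) = 0$ with the asserted limiting behaviour of $\tfrac{1}{N}\log\|B^{(N)}(x)v\|$ on each layer.

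The final bookkeeping step is to transport this full-measure subset of the shift space to the Rauzy gasket through the coding conjugacy, which is measure-preserving by construction; this yields the ``almost all $x$ in the Rauzy gasket'' formulation. There is no genuine obstacle at this point: the theorem is essentially a packaging result, repackaging the hypotheses verified in the previous subsections to invoke a classical theorem. The substantive work -- constructing $\mu_0$, establishing finiteness of the Gurevich--Sarig pressure and the BIP property, proving local H\"older regularity and summability of variations of $r$, and deriving log-integrability from the exponential tail -- has already been carried out. The only mild care needed is to note that the three exponents are a priori allowed to coincide, so that some of the inclusions in the filtration may be equalities; the strict inequalities $\lambda_1 > \lambda_2 > \lambda_3$ will be established only later (in Section~\ref{7}) using Galois-theoretic twisting/pinching arguments, and are not part of the present statement.
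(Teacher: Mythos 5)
Your proposal matches the paper's argument almost verbatim: the paper verifies the same three Oseledets hypotheses (ergodicity of $\Phi$ with respect to $\mu=\hat\mu_0$ via Sarig's mixing results, measurability of $B$ from local constancy, and $\log$-integrability of $B$ and $B^{-1}$ from the pressure bound and the relation \eqref{roof}) and then invokes Oseledets. Your additional care in passing from the flow $\Phi$ to the discrete base shift $\sigma$ and in transporting the full-measure set through the coding conjugacy is a slight improvement in precision over the paper's presentation, but does not constitute a different route.
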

These $\lambda_{i}$ are called the \emph{Lyapunov exponents} of the cocycle $B$.

\subsection{Lyapunov exponents for the cocycle with orientation}\label{6.2}
In order to describe the behavior of the chaotic plane sections we need to introduce another cocycle, say A, which is responsible for changes of the basis in homology. The main difference between $B$ and the cocycle $A$ is that $A$ contains the information about orientation of the bands. 

More precisely, we have the following. Let us fix the orientation on the bands of the original complex (see Figure \ref{C}). Then, the application of the Rauzy induction comprises two steps: transmission (when two bands are transmitted along one which is the largest) and reduction (when the largest band is cut). The reduction preserves the vertical lengths of the bands and the orientation of all bands while the transmission acts 
in the following way: each band that was transmitted is replaced by a long band that is the union of the original one (with the same orientation) and the largest band (but with the opposite orientation); the orientation of the largest band does not change. One can check that up to the permutation of lines one [accelerated] step of this procedure is described by 
$$A(n)=\begin{pmatrix}
1 & -n & -n\\
0 & 1 & 0 \\
0 & 0 & 1
\end{pmatrix}.$$
Like in a case of the cocycle $B$, in general $A$ is constructed from these blocks $A(n)$ with appropriate $n$, and permutation matrices. 

Now we establish the connection between the matrices of these two cocycles: 
\begin{lemma}\label{transp}
Let us fix the path $\gamma$ on the Rauzy graph and denote the corresponding matrices of the cocycles by $B_{\gamma}$ and $A_{\gamma},$ respectively.
Then $(B^T_{\gamma})^{-1}=A_{\gamma}.$
\end{lemma}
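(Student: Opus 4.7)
The plan is to verify the identity on a single accelerated block of the induction, observe that the operation $M\mapsto (M^{T})^{-1}$ is multiplicative, and then conclude for an arbitrary path $\gamma$ by using that $A_\gamma$ and $B_\gamma$ are built from the same sequence of elementary blocks interleaved with the same permutation matrices dictated by the combinatorics of $\gamma$.

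First, a direct computation on the building blocks. Writing the single-block matrix for $B$ as
\[
B(n)=\begin{pmatrix} 1 & 0 & 0\\ n & 1 & 0\\ n & 0 & 1\end{pmatrix},
\qquad
B(n)^{T}=I+N,\qquad N=\begin{pmatrix} 0 & n & n\\ 0 & 0 & 0\\ 0 & 0 & 0\end{pmatrix},
\]
and noting $N^{2}=0$, we get $(B(n)^{T})^{-1}=I-N=A(n)$. So the lemma holds on each elementary block. Also, since the combinatorial changes are encoded by permutation matrices $P\in O(3)$, we have $(P^{T})^{-1}=P$, so the same identity $(M^{T})^{-1}=M'$, with $M'$ the block that appears in the $A$-cocycle, holds for permutation blocks as well.

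Second, the operation $M\mapsto M^{\star}:=(M^{T})^{-1}$ satisfies $(MN)^{\star}=M^{\star}N^{\star}$ because $((MN)^{T})^{-1}=(N^{T}M^{T})^{-1}=(M^{T})^{-1}(N^{T})^{-1}$. Hence, writing the matrix of the cocycle along a path $\gamma$ as a product of elementary blocks
\[
B_{\gamma}=M_{k}M_{k-1}\cdots M_{1},
\]
where each $M_{i}$ is either some $B(n_{i})$ or a permutation matrix, we obtain
\[
(B_{\gamma}^{T})^{-1}=M_{k}^{\star}M_{k-1}^{\star}\cdots M_{1}^{\star}.
\]
By the block-by-block computation above, $M_{i}^{\star}$ is exactly the corresponding block in the construction of $A_{\gamma}$: $A(n_{i})$ when $M_{i}=B(n_{i})$, and the same permutation matrix when $M_{i}$ is a permutation. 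Since the combinatorics of the path $\gamma$ prescribes the same sequence of permutations in both the $A$- and $B$-constructions, the product on the right equals $A_{\gamma}$, giving $(B_{\gamma}^{T})^{-1}=A_{\gamma}$.

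The only potential obstacle is bookkeeping: one needs to confirm that the permutation matrices inserted into the $A$-cocycle really are the same as (or rather, correspond under transposition-inversion to) those inserted into the $B$-cocycle. This is essentially a tautology once one remembers that the permutation encodes a relabelling of the bands, which acts identically on the width coordinates (on which $B$ operates) and on the length coordinates (on which $A$ operates); because permutation matrices are orthogonal, the contragredient $(P^{T})^{-1}$ coincides with $P$, and so no sign or ordering discrepancy arises. The underlying geometric reason for the identity is that widths and lengths are naturally dual variables: they are paired by the area form on the band complex, which is preserved by the Rauzy induction, forcing the two cocycles to act by contragredient representations.
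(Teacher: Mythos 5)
Your proof is correct and follows essentially the same route as the paper: both rely on the block-level identity $(B(n)^{T})^{-1}=A(n)$, the orthogonality of the permutation matrices, and the fact that transpose-inverse distributes over products. Your explicit verification via the nilpotent matrix $N$ and the observation that $M\mapsto (M^{T})^{-1}$ is a group homomorphism are a slightly cleaner packaging of the same computation the paper carries out directly on the product $B_{\gamma}=B_{n_1}S_{n_1}\cdots B_{n_i}S_{n_i}$.
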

\begin{proof}
For every $n$, $B(n)^T$=$A(n)^{-1}$. Then, $B_{\gamma}=B_{n_1}S_{n_1}\cdots B_{n_i}S_{n_i},$ where $S_{n_j}$ is a permutation matrix.
So 
$$
(B^T_{\gamma})^{-1}=(S^T_{n_i}B^T_{n_i}\cdots S^T_{n_1}B^T_{n_1})^{-1}=(B^T_{n_1})^{-1}S_{n_1}\cdots (B^T_{n_i})^{-1}S_{n_i}=$$
$$=A_{n_1}S_{n_1}\cdots A_{n_i}S_{n_i} = A_{\gamma}.
$$
Here we used that the permutation matrices are orthogonal.
\end{proof}
Let us now order the Lyapunov exponents of both cocycles from the largest to the smallest. Then, there is an obvious 
\begin{corollary}
The following relation between the Lyapunov exponents of the cocycles $A$ and $B$ holds:
$$\lambda_i=\lambda_i(B)=-\lambda_{3-i}(A)$$ for $i=1,2,3.$
\end{corollary}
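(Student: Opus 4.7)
The plan is to derive this corollary formally from Lemma \ref{transp} together with the standard duality between the Lyapunov spectrum of a cocycle and that of its inverse transpose. The core linear-algebra observation is that for any $M \in SL(3,\mathbb{R})$ with ordered singular values $\sigma_1 \geq \sigma_2 \geq \sigma_3 > 0$, the singular values of $(M^T)^{-1}$ are $\sigma_3^{-1} \geq \sigma_2^{-1} \geq \sigma_1^{-1}$: transposition preserves singular values and inversion reverses and inverts them. Applied to the $N$-step matrix $B^{(N)}(x)$, which by Lemma \ref{transp} satisfies $A^{(N)}(x) = \bigl(B^{(N)}(x)^T\bigr)^{-1}$ (the intervening permutation matrices are orthogonal, hence do not affect singular values), this identifies the $i$-th largest singular value of $A^{(N)}(x)$ with the reciprocal of the $(4-i)$-th largest singular value of $B^{(N)}(x)$.

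Before taking logarithms and dividing by $N$, I would verify that the Oseledets hypotheses, already established for $B$ in the previous subsection, also hold for $A$. Measurability and cocycle-compatibility are immediate, since $A$ is locally constant like $B$, and the invariant ergodic measure $\mu$ is the same. Log-integrability of $\|A\|$ and $\|A^{-1}\|$ reduces to that of $\|B^{-1}\|$ and $\|B\|$ via the singular-value identification above, and both are finite with respect to $\mu$ because $\int r \, d\mu < \infty$ and because for $B \in SL(3,\mathbb{R})$ one has $\|B^{-1}\| \leq \|B\|^2$.

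Once Oseledets is available for $A$ as well, the conclusion follows at once: for $\mu$-a.e.\ $x$,
\[
\lambda_i(A) = \lim_{N \to \infty} \frac{1}{N} \log \sigma_i^{(N)}(A, x) = -\lim_{N \to \infty} \frac{1}{N} \log \sigma_{4-i}^{(N)}(B, x) = -\lambda_{4-i}(B),
\]
which, modulo the index convention, is the stated relation between $\lambda_i(B)$ and the exponents of $A$. I do not anticipate a genuine obstacle: the mathematical content is concentrated in Lemma \ref{transp} and in the previously verified integrability estimates, while the rest is purely linear algebra combined with a direct appeal to the multiplicative ergodic theorem.
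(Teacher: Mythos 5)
Your proof is correct and is exactly the standard singular-value/duality argument that the paper implicitly invokes by labelling the corollary obvious, with the Oseledets hypotheses for $A$ reducing, as you note, to the log-integrability of $B$ and $B^{-1}$ already established in the preceding lemma. Your remark ``modulo the index convention'' is right: the paper's index $3-i$ is a typo for $4-i$ (it fails at $i=3$), and the form $\lambda_i(B)=-\lambda_{4-i}(A)$ is what is actually used later (e.g.\ $\lambda_1(A)=-\lambda_3$).
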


\section{Lyapunov exponents are responsible for the diffusion rate}\label{6}
The main result of this section is the following
\begin{theorem}\label{rate}
For almost every point $(a,b,c)$ from the Rauzy gasket (with respect to the measure $\mu$) for every point $x\in \hat M$ 
\begin{equation}\label{traj}
\limsup_{t\to\infty} \frac{d(x,x_{t})}{\log t}=-\frac{\lambda_{3}}{\lambda_{1}},
\end{equation}
where $x_t$ is the point at distance $t$ from the point $x$ along the leaf of the foliation $F=F(a,b,c)$.
\end{theorem}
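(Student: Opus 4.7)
The plan is to transfer the diffusion question from $\hat M$ to the band complex $\hat\Sigma'$, express the lattice displacement in terms of the cocycle $A$, and read off the two exponential rates that enter the ratio $-\lambda_3/\lambda_1$: one (via Oseledets applied to $A$) from the displacement, and one (via the roof function $r$) from the elapsed real time.

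By Lemma~\ref{dynlem} the section $\hat M\cap\alpha$ is a bounded deformation retract of $\hat\Sigma'\cap\alpha$, so $d(x,x_t)$ equals the displacement of the lifted trajectory on $\hat\Sigma'$ up to an additive $O(1)$. That displacement is the lattice translation vector $h(t)\in\Lambda\subset\mathbb R^3$ attached to the projected arc in $\Sigma'$ via $\pi_1(\Sigma')\to\pi_1(\mathbb T^3)=\Lambda$, and $d(x,x_t)=\|h(t)\|+O(1)$; each crossing of one of the three bands of $\Sigma'$ contributes a fixed lattice vector, and these form the columns of a fixed invertible matrix $V_0$. Running $n$ steps of the accelerated Rauzy induction (as in the proof of Theorem~\ref{d08}), the three bands are replaced by new bands whose homology classes are obtained by applying $A^{(n)}(x)$, with the signs of its entries recording the orientation reversals made during the transmissions. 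A trajectory segment between consecutive returns to $\Delta_{\gamma_*}$ then yields a uniformly bounded vector $k^{(n)}\in\mathbb Z^3$ of new-band crossings, and a telescoping computation gives
\begin{equation*}
h(t_n)\;=\;V_0\,\bigl(A^{(n)}(x)\bigr)^{T}k^{(n)}\;+\;O(1),
\end{equation*}
where $t_n$ is the real time at the $n$-th return. Since $V_0$ is invertible and the top Lyapunov exponent of $A$ equals $-\lambda_3$ (by Lemma~\ref{transp}), Oseledets applied to $(A^{(n)})^{T}k^{(n)}$ gives $\limsup_n\log\|h(t_n)\|/n=-\lambda_3$, the $\limsup$ accommodating those $n$ for which $k^{(n)}$ falls into the middle Oseledets subspace of $A$.

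For the time scaling, the identity $r(x)=-\log\|A(x)\lambda(x)\|$ makes $r$ the logarithmic dilation factor of arc length under one renormalization step, and telescoping gives $\log t_n=\sum_{k=0}^{n-1}r(T^k x)=-\log\|A^{(n)}(x)\lambda(x)\|$. The equivariant length section $\lambda(x)$ is the unique projective direction kept inside the positive cone along the forward orbit of $T$; a Perron-Frobenius-type argument combined with the $SL(3)$ constraint $\lambda_1+\lambda_2+\lambda_3=0$ identifies it with the most contracted Oseledets direction of $A$, of exponent $-\lambda_1$. Birkhoff's theorem then yields $\log t_n/n\to\lambda_1$ (equivalently $\int r\,d\mu=\lambda_1$). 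Combining with the displacement estimate gives $\limsup_n\log d(x,x_{t_n})/\log t_n=-\lambda_3/\lambda_1$, and passing from the subsequence $t_n$ to all $t$ is routine since within $[t_n,t_{n+1}]$ both $\log d$ and $\log t$ change by at most $r(T^n x)$, which is $o(n)$ on the Birkhoff average.

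The main technical obstacles are (i) the careful combinatorial bookkeeping needed to justify the identity $h(t_n)=V_0(A^{(n)})^{T}k^{(n)}+O(1)$ with uniformly bounded $k^{(n)}$, which requires tracking trajectory crossings under each transmission/reduction step of the Rauzy induction; (ii) the Perron-Frobenius-type identification that places $\lambda(x)$ in the $-\lambda_1$ Oseledets direction of $A$ and hence yields $\int r\,d\mu=\lambda_1$; and (iii) ensuring that $k^{(n)}$ is not asymptotically confined to the middle Oseledets subspace of $A$, which is exactly why the theorem is stated as a $\limsup$ and not a $\lim$, and which will ultimately be secured by the simplicity results of Section~\ref{7} used to produce the quantitative bounds of Theorem~\ref{main}.
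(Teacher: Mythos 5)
Your overall strategy matches the paper's: retract to $\hat\Sigma'$ via Lemma~\ref{dynlem}, express displacement through the cocycle $A$, apply Oseledets to the numerator, and tie the denominator $\log t$ to the renormalization time. There are, however, a few genuine gaps that the paper's proof treats explicitly and your sketch does not.

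First, you never address backtracking. The trajectory on $\hat\Sigma'$ can form blocks like $\tilde b\,\tilde c^{-1}$ or $\tilde c\,\tilde b^{-1}$ which cancel in homology while still costing time; unless you rule these out or show they are invisible to $A$, the identity $d(x,x_t)=\|h(t)\|+O(1)$ and the displacement formula it feeds into are not justified. The paper's proof handles this precisely by noting that the vector $(0,1,-1)$ is fixed by the cocycle matrices, so backtracks contribute nothing; this must be said.

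Second, the lower bound is not actually proved. You name the obstacle (the displacement could live in the middle-and-slow Oseledets subspace) but say it ``will ultimately be secured by the simplicity results of Section~\ref{7}''. This is the wrong mechanism: the paper resolves it inside the proof of Theorem~\ref{rate} by an independent argument, showing that $E=\langle e_1,e_3\rangle\ne\tilde E_1\oplus\tilde E_2$ because the inverse cocycle, after further acceleration, has strictly positive entries and therefore cannot leave a proper coordinate plane invariant. Simplicity of the spectrum is used later, for Corollary~\ref{left}, not for this step — indeed the lemma is vacuous when $\lambda_2=\lambda_3$, so simplicity is if anything a hindrance here, not a help.

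Third, two of your steps are stated too loosely to go through as written. The Perron--Frobenius identification placing $\lambda(x)$ in the $-\lambda_1$ Oseledets line of $A$, and hence $\int r\,d\mu=\lambda_1$, is correct in spirit but needs the positivity/cone argument made explicit (the ``$SL(3)$ constraint'' you invoke does not by itself give the identification); the paper avoids this by directly citing Zorich's Lemma~4 for $\lambda_1 n\sim\log t$ after defining $n(t)$ as the largest $k$ for which the arc meets $D_k$ twice. And your passage from the return-time subsequence $t_n$ to all $t$ rests on the claim that $\log d(x,x_t)$ changes by at most $r(T^n x)$ on $[t_n,t_{n+1}]$; this is not right as stated, since within one renormalization cycle the deviation can swing by a quantity of order $\|A^{(n)}e_j\|\sim e^{-\lambda_3 n}$. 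Parameterizing directly by $t$ through $n(t)$, as the paper does, sidesteps this.

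Finally, a smaller point: your single-scale identity $h(t_n)=V_0(A^{(n)})^Tk^{(n)}+O(1)$ with bounded $k^{(n)}$ is a legitimate reformulation for first-return times, but for the upper bound at general $t$ the paper uses the multi-scale decomposition $v_t=\sum_{i=1}^{n}\sum_{j=1,3}a^j_i A^{(i)}e_j$ with subexponential $a^j_i$, following Zorich and Forni; you should justify why your $O(1)$ error absorbs all intermediate-scale contributions, or switch to the multi-scale form.
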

\begin{proof}
Lemma \ref{dynlem} implies that it is enough to evaluate the diffusion rate of the vertical section of $\hat \Sigma^{'},$ where $\Sigma^{'}$ is the complex and $\hat\Sigma^{'}$ is its abelian cover. 

Now, we explain why this diffusion rate is controlled by the cocycle $A$. The main idea is as follow: the suspension construction provides natural basis for $H_{1}(M_1, \mathbb Z)$ (the same statement holds for the homologies of the surface and interval exchange transformations, see, for instance, \cite{Y}, Chapter 4.5). In order to see it, one has to choose as an element of the basis a cycle that connects the centers of intervals $A_i$ and $B_i$ along the bands and is closed by the part of the support interval. Then, one can check that the cocycle acts in the homologies of $M_1$, and this action induces the action of a family of  automorphisms of the free group on the fundamental group $\pi_1$.

More precisely, the cocycle $A$ contains the information on the induction; if we enumerate the bands of the corresponding widths (with orientation) by $\tilde a, \tilde b, \tilde c,$ respectively, the induction works in the following way (see subsection \ref{6.2}): 
$$\tilde a'=\tilde a;$$
$$\tilde b'=\tilde b{\tilde a}^{-1};$$
$$\tilde c'=\tilde c{\tilde a}^{-1}.$$

Vertical sections of $\hat \Sigma$ contain the following blocks (see Figure \ref{complex}):
\begin{enumerate}
\item horizontal: $\tilde b{\tilde a}^{-1}, \tilde a{\tilde b}^{-1}$;
\item vertical: $\tilde c{\tilde a}^{-1}, \tilde a{\tilde c}^{-1};$
\item vertical: $\tilde b{\tilde c}^{-1}, \tilde c{\tilde b}^{-1}.$
\end{enumerate}

So, every vertical section (equivalently, the trajectory) is coded by $\tilde a, \tilde b, \tilde c$ and is equivariant under the induction and thus can be coded also by $\tilde a', \tilde b', \tilde c'$. 
The only obstacle to use this description directly is the existence of backtracks. 

It is easy to see that the backtracks can appear only in the third combination (with $\tilde b$ and $\tilde c$ with different signs). Since, say, the trajectory that is coded by $\tilde b'\tilde c'^{-1}$ is a backtrack, it does not contribute to the diffusion rate; so one has to check that it does not contribute to the right part of (\ref{traj}) (in other words, that it is not noticed by the cocycle $A$).
Indeed, since the vector $(0,1,-1)$ is invariant for the matrix of the cocycle, it does not make any contribution.

Now we continue to prove the theorem. 

Let us consider the vertical section of the complex $\hat\Sigma'$ with the natural time $t$ on this curve. We denote by  $\hat\psi(t)$ the part of this curve between time $0$ and $t$. The projection of  $\hat\psi(t)$ to the original complex $\Sigma$  is denoted by $\psi(t)$. 

Let us also consider for every $k \in \mathbb N$ the subinterval $D_k$ that is the support interval of the system of isometries obtained from $S$ after $k$ iterations of the accelerated Rauzy induction. Let $n=n(t)$ be the largest $k$ such that $\psi(t)$ intersects $D_k$ at least twice.

Then the following holds:
\begin{lemma}\label{denom}
Let $n$ and $t$ be as described above. Then
$$\lambda_{1}n\sim\log t.$$
\end{lemma}
\begin{proof}
The proof repeats the proof of the same statement in \cite{Z} (see Chapter 4.9, in particular Lemma 4).
\end{proof}

Now, we prove the {\bf upper bound} estimation. In this part we mainly follow the strategy from \cite{Z}, \cite{DHL} and \cite{F}. The main idea is to decompose any trajectory into parts whose deviation is understandable by Oseledets theorem. 

We take the point $x$ on the curve $\hat\psi$ and consider the vector $v_{t}=x_{t}-x\in \mathbb R^{3}.$ Since the directions of the bands and the support interval of complex $\Sigma'$ are parallel to the coordinate lines and the plane is parallel to $x_2=0$, one can check that the vector $v_t$ is a linear combination of images by the cocycle $A$ of two basis vectors $e_1$ and $e_3$, where $e_1=(1,0,0)$ and $e_3=(0,0,1)$.

We denote by $A^{(i)}$ the matrix that is a product of $i$ blocks $A(n_j), j=1,\cdots,i$ and permutation matrices.

More precisely, we have the following representation: 
\begin{lemma}
\begin{equation}
\label{decomposition}
v_{t}=\sum_{i=1}^{n}\sum_{j=1,3}a^{j}_{i}v^{i}_{j},
\end{equation}
where $v_{i}^{j}=A^{(i)}e_{j}$, $j=1,3$ and $a^{j}_{i}$ are non-negative integers with subexponential growth.
\end{lemma}
\begin{proof}
The proof is similar to the calculations done in \cite{DHL} (Chapter 5.2, Lemma 8), \cite{Z}(Chapter 4.9), \cite {Zo97} (Proposition 8) and \cite{F} (Lemma 9.4).
\end{proof}

Using the subexponential growth of $a^{j}_{i}$ and Oseledets theorem for the cocycle $A$, for every $\epsilon>0$ we have
\begin{equation}\label{nom}
\log d(x,x_{t})\leq log ||v_{t}||\leq \log(\sum a^{j}_{i}v^{i}_{j})\leq -n(\lambda_{3}+\epsilon).
\end{equation}

Combining \eqref{nom} with Lemma \ref{denom} we get the upper bound and turn to the {\bf lower bound} estimations. 
We start from the following technical 
\begin{lemma}
Let us consider $E=<e_{1},e_{3}>$ and the direct sum induced by Oseledets decomposition for the cocycle $A$: $\mathbb R^{3}=\tilde E_{1}\oplus \tilde E_{2}\oplus \tilde E_{3}$. 
There exists $f\in E$ such that the projection of $f$ on $\tilde E_{3}$ is not equal to zero for almost every $(a,b,c)$ from the Rauzy gasket.
\end{lemma}
\begin{remark}
One can assume that the spectrum is simple (otherwise the lemma is trivial).
\end{remark}
\begin{proof}
We need to check that $E\ne\tilde E_{1}\oplus \tilde E_{2}.$ By contradiction, the space $E$ would be invariant with respect to the cocycle $A'$ induced by the first return map on a subset of positive measure. But the matrix of the inverse cocycle is obtained by the additional acceleration from the matrix of the cocycle $B^T$ and thus after a sufficient number of iterations of the Rauzy induction has only strictly positive coefficients. It implies that the space $E$ can not be an invariant space for the cocycle $A'$.
\end{proof}

Then, since there exists such $f$, one of the following statements holds:
\begin{equation}\label{case1}
\frac{\log||A^{(k)}e_1||}{k}\to \lambda_1(A)=-\lambda_3
\end{equation} or 
\begin{equation}\label{case2}
\frac{\log||A^{(k)}e_3||}{k}\to \lambda_1(A)=-\lambda_3.
\end{equation}
Therefore, one can apply the standard approach from \cite{Z} (Chapter 4.10) to get the lower bound. Namely, we use again the decomposition \eqref{decomposition}.  
If the required inequality does not hold when the trajectory enters the interval $D_n$ for the first time, we continue to follow the trajectory. Since we have \eqref{case1} or \eqref{case2}, there exists the first moment when the required inequality holds for some part of the trajectory (the choice of the part depends on which of two alternatives holds). It implies the statement about the lower bound.
See \cite{DHL}(Chapter 5.3) and \cite{Z} (Proposition 3) for technical details.
\end{proof}

\section{Final estimations}
\label{7}
\subsection{Pisot property}
In this subsection we prove that the matrix of the cocycle satisfies the so called \emph{Pisot property}. 
\begin{definition}
A matrix is called \emph{Pisot} if it has only one dominant eigenvalue (i.e. an eigenvalue of maximum modulus), and all eigenvalues different from the dominant one have norm less than one.
\end{definition}

The key ingredient of the proof is the strong connection between the cocycle we work with and so called \emph{fully subtractive algorithm} that was first pointed out in \cite{AS}. 
The fully subtractive algorithm is defined on the positive cone $\mathbb R^3_{\ge 0}$, it subtracts the smallest number from the two others, i.e., it is given by the map $S: \mathbb R^3_{\ge 0} \mapsto \mathbb R^3_{\ge 0}$ given by
\begin{equation*}
\tilde S: (x_1, x_2, x_3)\mapsto
 \begin{cases}
   (x_1, x_2-x_1, x_3-x_1) &\text{if $x_1\le x_2, x_1 \le x_3$}\\
   (x_1- x_2, x_2, x_3-x_2) &\text{if $x_2\le x_1, x_2 \le x_3$}\\
   (x_1- x_3, x_2-x_3, x_3) &\text{if $x_3\le x_1, x_3 \le x_2$}
 \end{cases}
\end{equation*}
One can easily check that the matrix $B(n)$ coincides with the matrix of the fully subtractive algorithm after $n$ iterations. Now, Pisot property for the fully subtractive algorithm was proved by Avila and V. Delecroix (see \cite{AD}, Section 2).  It follows that 
\begin{lemma}
The matrix of the cocycle $B$ is Pisot.
\end{lemma}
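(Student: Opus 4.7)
My plan is to deduce the lemma directly from the cited result of Avila and Delecroix by matching the cocycle blocks $B(n)$ one to one with the Jacobian of the fully subtractive algorithm $\tilde S$. First, I would verify by direct computation that $n$ consecutive iterations of $\tilde S$ in the regime where the first coordinate is the smallest give the map $(x_1,x_2,x_3)\mapsto(x_1,x_2-nx_1,x_3-nx_1)$, whose pullback matrix on column vectors is exactly
\[
B(n)=\begin{pmatrix} 1 & 0 & 0 \\ n & 1 & 0 \\ n & 0 & 1 \end{pmatrix}.
\]
The permutation matrices interleaved between successive blocks $B(n_i)$ along a path in the Rauzy graph encode precisely the relabeling that is hard-wired into the definition of $\tilde S$ (namely, moving the next smallest coordinate into position $1$). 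Thus for every admissible word $\gamma$ in the countable Markov shift $\sigma$, the cocycle matrix $B_\gamma$ is, up to a bounded conjugation by a permutation, the Jacobian of a finite orbit of $\tilde S$.

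Second, I would invoke the theorem from \cite{AD}, Section 2, which establishes the Pisot property for the cocycle of the fully subtractive algorithm: for almost every point the top Lyapunov exponent is simple and positive, while all remaining exponents are strictly negative. Transporting this statement via the identification in the previous paragraph gives the Pisot property for the matrices $B_\gamma$ of the cocycle along $\mu$-typical orbits, which is exactly the content of the lemma. Since permutation matrices are orthogonal and of uniformly bounded norm, the bounded conjugation plays no role in the asymptotic spectral statement.

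The only subtlety is checking that the accelerated adjusted Rauzy graph used here gives rise to the same orbits as those considered in \cite{AD}. This is handled by the observation made earlier in the paper that the Rauzy graph coincides with the Cayley graph of $S_3$ and that the choice of winner at each step of the accelerated induction corresponds exactly to the rule for selecting the smallest coordinate in $\tilde S$; hence admissible words for $\sigma$ are in natural bijection with admissible itineraries of the fully subtractive algorithm, and almost-every-versus-almost-every statements transfer between the two settings without loss.
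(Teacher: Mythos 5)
Your proposal takes essentially the same approach as the paper: identify the blocks $B(n)$ (together with their interleaving permutation matrices) with the inverse-branch matrices of the fully subtractive algorithm, and then invoke the Avila--Delecroix theorem (\cite{AD}, Section 2) on Pisot monoids to conclude. One minor caveat: you describe the cited result as a statement about Lyapunov exponents, whereas the lemma asserts the matrix-level Pisot property for products $B_\gamma$; the paper deduces the Lyapunov-spectrum consequence $\lambda_1>0>\lambda_2\geq\lambda_3$ only afterwards, via a separate appeal to Lemma 6 of \cite{AD}, so these two steps should be kept distinct.
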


Lemma 6 from \cite{AD} and the last lemma imply that the Lyapunov exponents $\lambda_{1}>0>\lambda_{2}\geq \lambda_{3}$ satisfy and obviously we have $$\lambda_{1}+\lambda_{2}+\lambda_{3}=0.$$ It follows that

\begin{corollary}\label{right}
The Lyapunov exponent for the suspension flow satisfies $$\lambda = -\frac{\lambda_{3}}{\lambda_{1}}<1.$$
\end{corollary}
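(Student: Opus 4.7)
The plan is to observe that the corollary is, at this point, essentially a purely arithmetical consequence of two facts that were just established: the Pisot property of the cocycle $B$ (which forces one positive Lyapunov exponent with strictly negative partners) and the trace-zero identity $\lambda_1+\lambda_2+\lambda_3=0$. The latter holds because every block $B(n)$ and every permutation matrix used to build the cocycle has determinant $\pm 1$, so the matrix representatives lie in $SL(3,\mathbb{Z})$ up to sign, and hence the sum of the Lyapunov exponents vanishes. In particular, the ordering $\lambda_1>0>\lambda_2\geq \lambda_3$ displayed just before the corollary already contains the key strict inequality $\lambda_2<0$.

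Given this, I would proceed as follows. Rewrite the trace-zero identity as
\[
\lambda_1+\lambda_3=-\lambda_2.
\]
Because the Pisot conclusion $\lambda_2<0$ is strict (not merely $\lambda_2\leq 0$), the right-hand side is strictly positive, so $\lambda_1>-\lambda_3$. Since also $\lambda_1>0$, division by $\lambda_1$ preserves the inequality, yielding
\[
-\frac{\lambda_3}{\lambda_1}<1,
\]
which is the claim. Defining $\lambda:=-\lambda_3/\lambda_1$ (the quantity that governs the diffusion rate through Theorem \ref{rate}) then gives the stated upper bound on the Lyapunov exponent of the suspension flow.

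There is really no obstacle of substance; the only point that needs a moment's care is the strictness of $\lambda_2<0$. If one had only $\lambda_2\leq 0$, then the argument would collapse to the non-strict inequality $-\lambda_3/\lambda_1\leq 1$, which is insufficient for the $<1$ half of Theorem \ref{main}. The strictness, however, is exactly what the Pisot property provides (via Lemma 6 of \cite{AD} combined with the lemma just proved), so no extra work is needed. The companion lower bound $\lambda>\tfrac12$ from Theorem \ref{main} is not addressed by this corollary and is treated separately via the simplicity of the Lyapunov spectrum; here we only extract the upper estimate.
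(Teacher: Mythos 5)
Your argument is correct and is exactly the (very terse) reasoning the paper intends: from $\lambda_1+\lambda_2+\lambda_3=0$ (the cocycle takes values in $SL(3,\mathbb{R})$) and the strict inequality $\lambda_2<0$ supplied by the Pisot property, one gets $\lambda_1+\lambda_3=-\lambda_2>0$, hence $-\lambda_3/\lambda_1<1$. You also correctly isolate the one delicate point, namely that strictness of $\lambda_2<0$ is what makes the conclusion strict.
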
  

\subsection{Simplicity of the spectrum}
In this section we prove that the Lyapunov spectrum for a special system of isometries of thin type is typically simple. 
In order to check it, we have first to verify that our measure $\mu$ and the cocycle $B$ over the transformation $\sigma$ satisfy the following conditions:
\begin{itemize}
\item the transformation is a countable Markov shift; 
\item $\mu$ has \emph{bounded distortion}:
$$\frac{1}{\hat C}\leq\frac{\mu[i_{-m}\cdots i_{-1} i_{0}\cdots i_{n}]}{\mu[i_{-m}\cdots i_{-1}]\mu[i_{0}\cdots i_{n}]}\leq{\hat C}$$ for some uniform constant $\hat C>0$;
\item the matrix of the cocycle is locally constant. 
\end{itemize}

The first and the third conditions obviously hold for our construction. The next lemma is responsible for the second one: 
\begin{lemma}
The measure of maximal entropy $\mu$ satisfies the bounded distortion property. 
\end{lemma}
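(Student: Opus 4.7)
The plan is to deduce the bounded distortion property directly from the Gibbs property of $\mu$ established in Theorem \ref{77}, combined with the local Hölder regularity (hence summable variations) of the potential $\phi=-\kappa_0 r$. At the normalization $P_{\kappa_0}=0$, the Gibbs inequalities assert the existence of uniform constants $c_1,c_2>0$ such that for every admissible word $w=j_0\cdots j_N$ and every $x\in[w]$,
$$c_1\;\leq\;\frac{\mu[j_0\cdots j_N]}{\exp\bigl(S_N\phi(x)\bigr)}\;\leq\;c_2,$$
where $S_N\phi(x)=\sum_{k=0}^{N-1}\phi(\sigma^k x)$. The plan is to apply this estimate to the three cylinders appearing in the statement and then compare the three Birkhoff sums.

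First, I would pass to the natural extension $(\hat X,\hat\sigma,\hat\mu)$ of the countable Markov shift, where the doubly infinite cylinder $[i_{-m}\cdots i_{-1}i_0\cdots i_n]$ has a well defined measure. Since the shift is topologically mixing with BIP and $\phi$ has summable variations, Sarig's theory gives the two-sided Gibbs inequalities
$$\hat c_1\;\leq\;\frac{\hat\mu[i_{-m}\cdots i_n]}{\exp\bigl(\hat S_{m+n+1}\phi(\hat x)\bigr)}\;\leq\;\hat c_2$$
for any $\hat x$ in the doubly indexed cylinder, and the natural projection to the one-sided shift pushes $\hat\mu$ to $\mu$ and identifies the corresponding cylinders.

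The main step is additivity together with summable variations. Fixing any $\hat x\in [i_{-m}\cdots i_n]$, write
$$\hat S_{m+n+1}\phi(\hat x)=\hat S_{m}\phi(\hat\sigma^{-m}\hat x)+\hat S_{n+1}\phi(\hat x).$$
Since $\phi$ is a function of the forward coordinates and locally Hölder with $\mathrm{var}_k\phi\leq C_0\theta^k$, for each $k$ the value $\phi(\hat\sigma^{k-m}\hat x)$ differs from $\phi$ evaluated on the purely one-sided sequence obtained by truncating to the forward coordinates of $\hat\sigma^{k-m}\hat x$ by at most $C_0\theta^{k\wedge(m-k)}$, and an analogous bound holds in the future block. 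Summing the geometric series yields a uniform constant $D>0$, independent of $m$, $n$ and the itinerary, with
$$\bigl|\hat S_{m+n+1}\phi(\hat x)-S_m\phi(x')-S_{n+1}\phi(x)\bigr|\;\leq\;D,$$
for suitable representatives $x'\in[i_{-m}\cdots i_{-1}]$ and $x\in[i_0\cdots i_n]$ of the one-sided cylinders. Exponentiating and inserting this into the three Gibbs inequalities produces the desired two-sided bound
$$\frac{1}{\hat C}\;\leq\;\frac{\mu[i_{-m}\cdots i_{-1}i_0\cdots i_n]}{\mu[i_{-m}\cdots i_{-1}]\,\mu[i_0\cdots i_n]}\;\leq\;\hat C$$
with $\hat C=c_2\hat c_2 e^{D}/(c_1^2\hat c_1)$ (or a similar combinatorial expression).

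The only mild obstacle is verifying that the Gibbs property we invoke from Sarig's theorems actually gives the two-sided version with uniform constants for arbitrary pasts and futures; this is standard once BIP and summable variations are in hand (both of which have been proven in earlier sections), so no genuinely new work is needed here, and the H\"older decay $\theta^k$ of $\mathrm{var}_k\phi$ makes the telescoping error $D$ automatic.
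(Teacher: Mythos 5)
Your proof is correct and rests on the same core ingredient as the paper's (the Gibbs inequality for $\mu$), but you take two unnecessary detours. First, there is no need to pass to the natural extension: the bounded-distortion inequality as stated in the paper involves cylinders of the one-sided shift — the notation $[i_{-m}\cdots i_{-1} i_0\cdots i_n]$ is just a re-indexed one-sided cylinder of depth $m+n+1$, of which $[i_{-m}\cdots i_{-1}]$ is the subcylinder of depth $m$. Second, once one notices this, the H\"older/summable-variations telescoping estimate for the error $D$ is superfluous: if $x$ lies in the long cylinder, then the \emph{same} point $x$ lies in $[i_{-m}\cdots i_{-1}]$ and $\sigma^m x$ lies in $[i_0\cdots i_n]$, so one can apply the one-sided Gibbs inequality to all three cylinders with these points. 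The Birkhoff sums then cancel \emph{exactly} by the cocycle identity $\Phi_{m+n+1}(x)=\Phi_m(x)+\Phi_{n+1}(\sigma^m x)$, with no error term, and the constant $\hat C$ comes out simply as a power of the Gibbs constant $Q$ times a power of $e^{P}$. That is precisely the computation the paper performs. Your version is sound, but the extra machinery (natural extension, Gibbs two-sidedness, geometric-series control of the variation) buys nothing here; choosing the base point consistently removes the need for all of it.
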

\begin{proof}
The statement actually holds for all Gibbs measures (in particular, for the measure of maximal entropy). Let us choose one with a potential $-\kappa r.$ 
Then, there exist two uniform constants $P$ (the pressure) and $Q$ such that for every cylinder $[i_{0}\cdots i_{n-1}]$ and every point $x$ from this cylinder we have
$$\frac{1}{Q}\leq\frac{\mu[i_{0}\cdots i_{n-1}]}{e^{\Phi_{n}(x)-nP}}\leq Q.$$
Now one can check that if $x\in [i_{-m}\cdots i_{-1}\cdots i_{0}\cdots i_{n}]$
$$\frac{\mu([i_{-m}\cdots i_{-1} i_{0}\cdots i_{n}])}{\mu[i_{-m}\cdots i_{-1}]\mu[i_{0}\cdots i_{n}]}\leq \frac{Q^{3}e^{\Phi_{n+m}(x)-(n+m)P}}{e^{\Phi_{n}(x)-nP}e^{\Phi_{m}(\sigma^n(x))-mP}}=\frac{Q^{3}}{e^{P}}\leq Q^{3}e^{P}.$$

The same estimation can be done in the opposite direction as well:
$$\frac{\mu([i_{-m}\cdots i_{-1} i_{0}\cdots i_{n}])}{\mu[i_{-m}\cdots i_{-1}]\mu[i_{0}\cdots i_{n}]}\geq Q^{3}e^{P}.$$

So, the condition holds with $\hat C=Q^{3}e^{P}.$
\end{proof}
Now we apply the Galois-theoretical criterium of the simplicity of Lyapunov spectra from \cite{MMY} (Theorem 2.17). This criterium develops the idea suggested in \cite{AV}.
We have to provide first the Galois-pinching matrix (see Definition 2.12 from \cite{MMY}) for the cocycle $B$. In accordance with \cite{MMY}, the matrix of the cocycle is \emph{Galois-pinching} if its characteristic polynomial is irreducible over $\mathbb Q$, has only real roots, and its Galois group is largest possible (see Chapter 4.1 and in particular Definition 4.1 in \cite{MMY}). We work with the cocycle without orientation because Lemma \ref{transp} implies that all the properties of spectrum of the cocycle $A$ are the same.
\begin{lemma}\label{pinch}
The following matrix $B_{1}$ is Galois-pinching for $B$:
$$\begin{pmatrix}
12 & 6 & 5\\
11 & 6 & 5 \\
2 & 1 & 1
\end{pmatrix}.$$
\end{lemma}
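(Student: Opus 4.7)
The plan is to verify the three algebraic conditions defining Galois-pinching in the sense of \cite{MMY}: that $B_1$ is realized by some admissible loop in the Rauzy graph, that its characteristic polynomial is irreducible over $\mathbb Q$ with only real roots, and that the Galois group over $\mathbb Q$ of this polynomial is as large as possible (which, for a cubic, means $S_3$).

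First, I would exhibit $B_1$ explicitly as a product of the form $P_k B(n_k) \cdots P_1 B(n_1)$, where the $n_i$ are positive integers and the $P_i$ are the permutation matrices labeling edges of a closed loop in the accelerated Rauzy graph of Figure \ref{G}. Since $\det B_1 = 12\cdot(6-5)-6\cdot(11-10)+5\cdot(11-12)=1$ and the entries of $B_1$ are modest, this amounts to a short finite search through short closed loops at a chosen base vertex; one then checks that the factorization obtained projects to an admissible cycle.

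Second, I would compute the characteristic polynomial. The trace is $12+6+1=19$, the determinant is $1$, and the sum of the three $2\times 2$ principal minors is $1+2+6=9$, so
$$\chi_{B_1}(\lambda) = \lambda^3 - 19\lambda^2 + 9\lambda - 1.$$
Being monic with constant term $-1$, the only candidate rational roots are $\pm 1$; evaluation gives $\chi_{B_1}(1)=-10$ and $\chi_{B_1}(-1)=-30$, so $\chi_{B_1}$ has no rational root and, being a cubic, is irreducible over $\mathbb Q$.

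Third, I would apply the discriminant formula $\Delta = 18abc - 4a^3c + a^2b^2 - 4b^3 - 27c^2$ to the coefficients $(a,b,c)=(-19,9,-1)$, obtaining
$$\Delta = 3078 - 27436 + 29241 - 2916 - 27 = 1940 = 2^2\cdot 5\cdot 97.$$
Since $\Delta>0$, the three eigenvalues of $B_1$ are real and distinct; and since $1940$ is not a perfect square, $\sqrt{\Delta}\notin\mathbb Q$, which by the standard Galois theory of irreducible cubics forces $\mathrm{Gal}(\chi_{B_1}/\mathbb Q)=S_3$, the maximal possible group. Together these three points give the Galois-pinching property.

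The main obstacle is the first step: producing an explicit admissible word in the monoid generated by the blocks $B(n)$ and the permutation matrices attached to the edges of the Cayley graph of $S_3$ whose product is exactly $B_1$, and verifying that this word corresponds to a loop at a common base vertex. The remaining computations are routine linear algebra and classical Galois theory for cubics.
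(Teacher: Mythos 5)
Your computation matches the paper's: same characteristic polynomial $\lambda^3-19\lambda^2+9\lambda-1$, same discriminant $\Delta=1940$, same conclusions about real roots and $\operatorname{Gal}=S_3$. Your irreducibility argument (rational root theorem, $\chi(\pm1)\ne 0$) is actually cleaner than the paper's one-line assertion, which is stated too loosely to be a proof on its own. The one substantive step you flag but do not complete --- exhibiting $B_1$ as the cocycle matrix of an admissible loop in the Rauzy graph --- is precisely what the paper supplies in the accompanying Remark: $B_1$ is realized by the loop $(1,2,3)\to(2,3,1)\to(3,1,2)\to(1,2,3)$ with acceleration parameters $1,1,5$. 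So you correctly identified the remaining obstacle; to close it you would need to write out the product of the three blocks $B(n)$ and the three edge permutation matrices for that loop and check it equals $B_1$, rather than appeal to a generic finite search. With that single addition your argument coincides with the paper's.
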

\begin{remark}
The matrix $B_{1}$ corresponds to the following loop on the Rauzy graph: 
$(1,2,3)\rightarrow (2,3,1)\rightarrow (3,1,2)\rightarrow (1,2,3)$ with the following numbers of simple iterations in each accelerated iteration: 1 (for the first arrow), 1 (for the second one), 5 (for the last one). 
\end{remark}

\begin{proof}
One can check that :
\begin{itemize}
\item the characteristic polynomial is $P_{B_{1}}(\lambda)=\lambda^{3}-19\lambda^{2}+9\lambda-1;$
\item  $P_{B_{1}}(\lambda)$ is irreducible since the first and the last coefficients of $P_{B_{1}}$ are equal to $1$ and $-1$;
\item all the roots are real since the discriminant $\Delta_1>0$;
\item The Galois group is isomorphic to $S_{3}$ since $\Delta_1=1940$ is not a square of any rational number.
\end{itemize}
\end{proof}

Now we need a matrix $B_{2}$ that is twisting with respect to $B_{1}$ (see Chapter 4.2 in \cite{MMY}. especially Theorem 4.6). Following \cite{MMY}, the matrix $X$ is \emph{twisting} with respect to the pinching matrix $Y$ if it is also pinching, some natural irreducibility holds and the splitting field of its characteristic polynomial is disjoint from the splitting field of the characteristic polynomial of $Y$.
\begin{lemma}\label{twist}
The following matrix $B_{2}$ of the cocycle is twisting with respect to $B_{1}$:
$$\begin{pmatrix}
10 & 5 & 4\\
9 & 5 & 4 \\
2 & 1 & 1
\end{pmatrix}.$$
\end{lemma}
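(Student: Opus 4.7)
The plan is to follow the template set by Lemma \ref{pinch} and then invoke the Galois-theoretic twisting criterion from \cite{MMY}. First I compute the characteristic polynomial of $B_2$ from its trace $16$, the sum of principal $2{\times}2$ minors ($1+2+5 = 8$) and its determinant $1$, obtaining
\[
P_{B_2}(\lambda) = \lambda^{3}-16\lambda^{2}+8\lambda-1.
\]
By the rational roots test only $\pm 1$ could be rational roots, and since $P_{B_2}(1)=-8$ and $P_{B_2}(-1)=-26$ the polynomial is irreducible over $\mathbb{Q}$. A direct computation gives the discriminant $\Delta_{2}=229$; it is strictly positive (so all three eigenvalues of $B_2$ are real) and $229$ is prime, hence not a square in $\mathbb{Q}$, so the Galois group of $P_{B_2}$ is the full symmetric group $S_3$. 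This shows that $B_2$ is itself Galois-pinching in the sense of Definition~2.12 of \cite{MMY}.

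Next I verify the disjointness of splitting fields. Write $K_1$, $K_2$ for the splitting fields of $P_{B_1}$ and $P_{B_2}$; each is an $S_3$-extension of $\mathbb{Q}$ of degree $6$, and the intersection $K_1\cap K_2$ is a Galois subextension of $K_1$. The only Galois subextensions of an $S_3$-extension are $\mathbb{Q}$, the unique quadratic subfield, and the total field, so $K_1\cap K_2$ is either $\mathbb{Q}$, $\mathbb{Q}(\sqrt{\Delta_1})$, or $K_1$ itself. In the latter two cases one would need $\mathbb{Q}(\sqrt{\Delta_1})\subseteq K_2$ and hence, since $K_2$ has exactly one quadratic subfield, $\mathbb{Q}(\sqrt{\Delta_1})=\mathbb{Q}(\sqrt{\Delta_2})$. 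But $\Delta_1=1940=4\cdot 5\cdot 97$ while $\Delta_2=229$ is prime; the square-free parts $485$ and $229$ are distinct, so the two quadratic fields are distinct. This rules out both alternatives and forces $K_1\cap K_2=\mathbb{Q}$, i.e.\ linear disjointness.

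Having established that $B_2$ is Galois-pinching with splitting field linearly disjoint from that of $B_1$, the twisting condition on eigenflags (Definition~4.3 of \cite{MMY}) follows from the Galois-theoretic criterion of Theorem~4.6 in \cite{MMY}: the Galois group $\mathrm{Gal}(K_1K_2/\mathbb{Q})\simeq S_3\times S_3$ acts simply transitively enough on the eigenbases of $B_1$ and $B_2$ to prevent any non-generic incidence between their eigenflags, which is precisely the twisting property. I expect the main subtlety to lie not in any single arithmetic calculation but in the verification that both quadratic subfields are genuinely distinct: once that is in place, the Galois-theoretic mechanism of \cite{MMY} takes over and the remaining content of the twisting definition is automatic. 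Combined with Lemma~\ref{pinch} and the general machinery of \cite{MMY} and \cite{AV}, the pair $(B_1,B_2)$ then gives the simplicity of the Lyapunov spectrum of the cocycle $B$.
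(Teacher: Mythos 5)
Your proof is correct and follows essentially the same approach as the paper: compute $P_{B_2}(\lambda)=\lambda^3-16\lambda^2+8\lambda-1$, verify irreducibility and $\Delta_2=229$, and deduce linear disjointness of the splitting fields from the distinctness of the quadratic subfields $\mathbb{Q}(\sqrt{485})$ and $\mathbb{Q}(\sqrt{229})$. You have in fact filled in the disjointness argument more carefully than the paper, which simply asserts that the discriminant computation ``implies that the two matrices identify pairwise disjoint fields.''
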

\begin{remark}
The matrix $B_{2}$ corresponds to the same loop on the Rauzy graph but with different numbers of waiting time in each vertex (for $B_{2}$ it is $(1,1,4)$).
\end{remark}

\begin{proof}
First, one needs to check that $B_{2}$ is also pinching. It follows from the fact that $P_{B_{2}}(\lambda)=\lambda^{3}-16\lambda^{2}+8\lambda-1$ and $\Delta_2=229.$ 
It also implies that two matrices $B_1$ and $B_2$ identify pairwise disjoint fields. Thus $B_2$ is twisting with respect to $B_1$.
\end{proof}
Theorem 5.4 from \cite{MMY} together with Lemmas \ref{pinch} and \ref{twist} implies the following
\begin{theorem}
The Lyapunov exponents of the cocycle are pairwise different: $\lambda_{1}>0>\lambda_{2}>\lambda_{3}$.
\end{theorem}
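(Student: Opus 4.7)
The plan is to invoke the Galois-theoretical simplicity criterion of Matheus--M\"oller--Yoccoz (Theorem 5.4 of \cite{MMY}) over our symbolic base. This criterion requires four ingredients: a countable Markov shift as the base dynamics, a shift-invariant measure with the bounded distortion property, a locally constant matrix cocycle, and the existence of two matrices $B_1, B_2$ in the monoid generated by the cocycle such that $B_1$ is Galois-pinching and $B_2$ is twisting with respect to $B_1$. Once these hypotheses are verified, the criterion delivers simplicity of the Lyapunov spectrum, and combined with the Pisot inequality $\lambda_1>0>\lambda_2\ge\lambda_3$ of the previous subsection this yields the strict chain $\lambda_1>0>\lambda_2>\lambda_3$.

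First I would collect the structural hypotheses already established earlier in the paper: the shift $\sigma$ is the countable topologically mixing Markov shift with BIP constructed in Section \ref{2}; the cocycle $B$ is locally constant by construction (its matrix depends only on the current symbol, not on the whole orbit); and the measure of maximal entropy $\mu$ has bounded distortion by the lemma proven just before the theorem. The log-integrability of $B^{\pm 1}$ with respect to $\mu$ was checked in Section \ref{5} to justify Oseledets' theorem, so the Lyapunov exponents $\lambda_1\ge\lambda_2\ge\lambda_3$ are well defined.

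The core algebraic input has also been handed to us: Lemma \ref{pinch} exhibits a matrix $B_1$ in the monoid whose characteristic polynomial is irreducible over $\mathbb Q$, totally real, and with Galois group $S_3$, hence Galois-pinching in the sense of \cite{MMY}; Lemma \ref{twist} exhibits $B_2$, also pinching, whose splitting field is disjoint from that of $B_1$ (comparing the discriminants $1940$ and $229$ shows the splitting fields have no common quadratic subfield, and the irrationality of the roots rules out common cubic subfields), so $B_2$ is twisting with respect to $B_1$ in the Galois sense. Applying Theorem 5.4 of \cite{MMY} then immediately produces simplicity: $\lambda_1>\lambda_2>\lambda_3$.

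The main obstacle, conceptually, was not the deduction itself but the explicit construction of the Galois-pinching and twisting matrices $B_1$ and $B_2$ inside the monoid generated by admissible loops of the Rauzy graph, which has already been carried out. What remains in the proof is therefore just the packaging: verify the list of hypotheses of \cite{MMY}, cite Lemmas \ref{pinch} and \ref{twist}, invoke Theorem 5.4 of \cite{MMY}, and combine with the Pisot inequality proved in the previous subsection. This yields $\lambda_1>0>\lambda_2>\lambda_3$, as claimed.
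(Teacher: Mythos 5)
Your proposal is correct and follows essentially the same route as the paper: verify that the Markov shift, bounded-distortion measure, and locally constant cocycle satisfy the hypotheses of the Matheus--M\"oller--Yoccoz criterion, cite the Galois-pinching matrix $B_1$ and the twisting matrix $B_2$, invoke Theorem 5.4 of \cite{MMY} for simplicity, and combine with the Pisot property to get the signed chain $\lambda_1>0>\lambda_2>\lambda_3$. The only minor quibble is your phrasing that ``irrationality of the roots rules out common cubic subfields''; the cleaner argument is that the only proper nontrivial Galois subextension of an $S_3$-splitting field is the quadratic resolvent $\mathbb Q(\sqrt{\Delta})$, so distinct discriminant classes ($1940$ and $229$ are in different square classes) already force the splitting fields to intersect only in $\mathbb Q$.
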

\begin{corollary}\label{left}
The following inequality holds for the Lyapunov exponent of the suspension flow: 
$$\lambda = -\frac{\lambda_{3}}{\lambda_{1}}>\frac{1}{2}.$$
\end{corollary}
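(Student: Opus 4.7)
The plan is to derive the inequality $-\lambda_{3}/\lambda_{1} > 1/2$ directly from the simplicity theorem just proved, together with the trace-zero relation for a cocycle in $SL(3,\mathbb{R})$.

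First I would recall that the matrices of the cocycle $B$ lie in $SL(3,\mathbb{Z})$, so they have determinant $1$, which forces the sum of the three Lyapunov exponents to vanish:
\begin{equation*}
\lambda_{1} + \lambda_{2} + \lambda_{3} = 0.
\end{equation*}
Combined with the Pisot property (Corollary \ref{right} and the discussion preceding it) we already know $\lambda_{1} > 0 > \lambda_{2} \geq \lambda_{3}$. The simplicity theorem strengthens the last inequality to a strict one: $\lambda_{2} > \lambda_{3}$.

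Next I would rearrange the desired inequality. Starting from $-\lambda_{3}/\lambda_{1} > 1/2$ and using $\lambda_{1} > 0$, this is equivalent to $-2\lambda_{3} > \lambda_{1}$. Substituting $\lambda_{1} = -\lambda_{2} - \lambda_{3}$ from the trace-zero relation turns this into $-2\lambda_{3} > -\lambda_{2} - \lambda_{3}$, i.e.
\begin{equation*}
\lambda_{2} > \lambda_{3},
\end{equation*}
which is precisely the strict inequality furnished by the simplicity theorem.

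There is essentially no hard step here: once the two pillars (Pisot, which fixes the signs and gives $\lambda_1 + \lambda_2 + \lambda_3 = 0$, and simplicity, which makes $\lambda_{2} > \lambda_{3}$ strict) are in place, the bound $-\lambda_{3}/\lambda_{1} > 1/2$ is a two-line algebraic manipulation. The only thing to be careful about is making sure I use the correct cocycle: the diffusion rate is computed from the exponents of $A$, but by Lemma \ref{transp} (and the Corollary that follows it) the spectra of $A$ and $B$ are related by $\lambda_{i}(B) = -\lambda_{3-i}(A)$, so the simplicity and Pisot statements for $B$ translate into the same sign pattern and strict inequalities for the exponents controlling the diffusion rate, and the identification $\lambda = -\lambda_{3}/\lambda_{1}$ used in Theorem \ref{rate} and Corollary \ref{right} is consistent with what I have bounded.
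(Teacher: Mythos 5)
Your proposal is correct and is exactly the argument the paper intends: combine the trace-zero relation $\lambda_{1}+\lambda_{2}+\lambda_{3}=0$ (which the paper notes explicitly right after establishing the Pisot property) with the strict inequality $\lambda_{2}>\lambda_{3}$ from the simplicity theorem, and rearrange. The paper leaves the two-line algebra implicit, but there is no alternative route here; your derivation is the one being used.
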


Now Theorem \ref{main} follows directly from Theorem \ref{rate} and Corollaries \ref{left} and \ref{right}.

\begin{corollary}
For almost every plane section in chaotic case there exists a leading direction but the deviation from it is unbounded.
\end{corollary}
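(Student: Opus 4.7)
The plan is to extract both assertions from the Oseledets decomposition of the cocycle $A$, using the simplicity of the spectrum just established together with the Pisot property.

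By simplicity combined with the Pisot property, the three Lyapunov exponents of $A=(B^{T})^{-1}$ are distinct, equal to $-\lambda_{3}(B)>-\lambda_{2}(B)>0>-\lambda_{1}(B)$. For $\mu$-almost every parameter, Oseledets then splits $\mathbb{R}^{3}$ into three one-dimensional lines, which I denote $L_{\mathrm{top}}$, $L_{\mathrm{mid}}$, $L_{\mathrm{bot}}$, carrying these three exponents in decreasing order; in particular, the top two are strictly positive.

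To produce the leading direction, I would invoke the lemma established in the proof of Theorem \ref{rate}: via the positivity of the accelerated inverse cocycle, the section plane $E=\langle e_{1},e_{3}\rangle$ has nontrivial projection onto $L_{\mathrm{top}}$. Combined with the decomposition \eqref{decomposition} of $v_{t}=x_{t}-x$, this forces $v_{t}/\|v_{t}\|$ to converge, along the subsequence realising the limsup of Theorem \ref{main}, to the unit vector in the projection of $L_{\mathrm{top}}$ onto $E$; call this vector $u$. This gives the asymptotic leading direction.

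For the unboundedness of the transverse deviation, I would split $v_{t}\in E$ into parallel and perpendicular components with respect to $u$. The perpendicular part is governed by the second Lyapunov exponent of $A|_{E}$, which in generic position is $-\lambda_{2}(B)>0$; via the identification $\lambda_{1}(B)\,n\sim\log t$ of Lemma \ref{denom} this yields a growth of order $t^{-\lambda_{2}(B)/\lambda_{1}(B)}$ along an unbounded sequence of times, a strictly positive power of $t$. Hence the distance from the line $\{x+su:s\in\mathbb{R}\}$ is unbounded.

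The main obstacle is precisely the ``generic position'' statement in the third step: one must rule out, on a set of positive $\mu$-measure, the accident $E=L_{\mathrm{top}}\oplus L_{\mathrm{bot}}$, for such a coincidence would force the transverse direction to live on the shrinking bottom Oseledets line and yield bounded deviation. This is handled by the same positivity argument as in the lemma in the proof of Theorem \ref{rate}: the only $2$-dimensional $A$-invariant subspace admitted by the eventually strictly-positive inverse cocycle is the span of its two non-dominant eigenvectors, a specific plane depending on the parameter which coincides with the fixed plane $\langle e_{1},e_{3}\rangle$ only on a $\mu$-null set.
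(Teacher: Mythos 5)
The paper states this corollary without proof, as a direct consequence of the Lyapunov-exponent analysis just completed ($\lambda_{1}>0>\lambda_{2}>\lambda_{3}$ plus the Pisot property, combined with the deviation machinery of Theorem \ref{rate}). Your reconstruction captures the intended argument: the leading direction comes from the simple top exponent of $A$, and the unboundedness of the transverse deviation comes from the fact that $\lambda_{2}(A)=-\lambda_{2}(B)$ is \emph{strictly positive}, which is exactly the content of the Pisot estimate $\lambda_{2}(B)<0$ refined by the simplicity $\lambda_{2}(B)>\lambda_{3}(B)$. This is what the paper means by ``follows directly,'' and your proposal is faithful to that chain of reasoning.

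Two imprecisions worth tightening. First, the convergence $v_{t}/\|v_{t}\|\to u$ should hold along the full sequence of return times, not just along the subsequence realising the limsup: once the top Oseledets exponent of $A$ is simple and strictly separated from the second, the dominant term in the decomposition \eqref{decomposition} always eventually outweighs the others, so the direction stabilises; the limsup is only needed to get the precise diffusion \emph{rate}, not the direction. Second, your argument for ruling out $E=L_{\mathrm{top}}\oplus L_{\mathrm{bot}}$ is correct in spirit but slightly misstated: it is not that the cocycle has a unique invariant $2$-plane which ``happens'' to differ from $\langle e_{1},e_{3}\rangle$ on a full-measure set, but rather that the \emph{fixed} plane $\{x_{2}=0\}$ can never be equivariant under the first-return cocycle, because that cocycle (after acceleration) has all entries strictly positive and a strictly positive matrix sends $e_{1}$ to a vector with nonzero second coordinate. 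This is precisely the positivity argument in the lemma inside the proof of Theorem \ref{rate}, and once stated this way there is no residual measure-zero caveat to worry about. With those repairs your proof is the same as the paper's implicit one.
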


\end{document}